\documentclass[11pt]{article}

\usepackage[margin=1in]{geometry}
\usepackage{amsmath,amssymb,amsthm}
\usepackage[colorlinks=true,linkcolor=blue,citecolor=blue,urlcolor=blue]{hyperref}
\usepackage[dvipsnames]{xcolor}
\usepackage{tikz}

\usetikzlibrary{decorations.pathreplacing,arrows.meta}

\newtheorem{theorem}{Theorem}[section]
\newtheorem{lemma}[theorem]{Lemma}

\newtheorem{proposition}[theorem]{Proposition}
\newtheorem{conjecture}[theorem]{Conjecture}

\theoremstyle{definition}
\newtheorem{definition}[theorem]{Definition}
\theoremstyle{remark}
\newtheorem{remark}[theorem]{Remark}

\newcommand{\calC}{\mathcal C}
\newcommand{\calF}{\mathcal F}
\newcommand{\calB}{\mathcal B}
\newcommand{\calA}{\mathcal A}
\newcommand{\calR}{\mathcal R}
\newcommand{\conv}{\operatorname{conv}}
\newcommand{\E}{\mathbb E}
\newcommand{\Prob}{\mathbb P}

\title{On colorful Helly numbers and the growth of Tverberg numbers}

\author{Chaya Keller\thanks{School of Computer Science, Ariel University, Israel. \texttt{chayak@ariel.ac.il}.}
	\qquad \qquad
	Skand Parvatikar\thanks{Department of Mathematics, Statistics, and Computer Science, University of Illinois Chicago, Chicago, IL 60607, USA. Email: \texttt{sparvati@asu.edu}.} \qquad \qquad Tuan Tran\thanks{School of Mathematical Sciences, University of Science and Technology of China. Supported by the Excellent
Young Talents Program (Overseas) of the National Natural Science Foundation of China under Grant No.
GG0010007003.}
}

\begin{document}
\maketitle

\begin{abstract}
In this paper, we obtain new upper and lower bounds on the colorful Helly number and the Tverberg number in an abstract convexity space with Radon number $r$. We prove an upper bound of $(r-1)2^r$ on the colorful Helly number, which is a factor $O(r)$ far from the lower bound, $2^{r-1}-1$.
The best previous bound, by Holmsen and Lee (2021), was $r^{r^{\log r}}$. As a consequence, we obtain improved quantitative bounds for fractional Helly numbers, the selection lemma, weak $\varepsilon$-nets, and the $(p,q)$-theorem, in abstract convexity spaces.
Furthermore, using the improved colorful Helly bound, 
we prove that the Tverberg number $r_k$ is at most $O(r^{\lceil \log_2 r \rceil})k$. The best previous bound, by P\'{a}lv\"{o}lgyi (2022), was  $r^{r^{r^{\log r}}}k$. We also study the $t$-wise Tverberg number $r_{k,t}$, which is the least $\ell$ for which any $\ell$ points can be divided into $k$ parts such that the convex hulls of any $t$ parts intersect. We prove the optimal bound $r_{k,t} = O_t(kr)$, in any $S_4$ separable space.
In the other direction, we construct a separable space in which $r_k = \Theta(r^2 k)$, while $r_{k,2}=\Theta(rk)$. This proves that the weak version of Eckhoff's conjecture, which suggested that
$r_k=O(rk)$ in any abstract convexity space, fails even in separable spaces. In addition, this shows that the abstract analogue of Reay's conjecture (1979), suggesting that $r_{k,2}=r_k$ in Euclidean spaces,
already fails in separable convexity spaces.  
\end{abstract}

  \makeatletter
  \renewcommand\tableofcontents{%
    \begingroup
      \parindent\z@
      \centerline{\normalfont\scshape\contentsname}%
      \vskip 1.2ex
      \@starttoc{toc}%
    \endgroup}
  \renewcommand*\l@section{\@dottedtocline{1}{0em}{2.3em}}
  \renewcommand*\l@subsection{\@dottedtocline{2}{2.3em}{3.2em}}
  \makeatother

  \setcounter{tocdepth}{1}
  \begingroup
  \small
  \tableofcontents
  \endgroup
  \medskip

\section{Introduction}\label{sec:intro}

\subsection{Background}

\paragraph{Helly's theorem, Radon's theorem, and their generalizations.} The classical \emph{Helly's theorem} asserts that for any finite family of convex sets in $\mathbb{R}^n$, if any $n+1$ sets in the family have a common point, then all sets in the family have a common point. One of its best-known generalizations is the \emph{colorful Helly theorem} of Lov\'asz and B\'ar\'any~\cite{Barany82} which states that if $F_1,\ldots,F_{n+1}$ are finite families of convex sets in $\mathbb R^n$ and every \emph{rainbow} set consisting of one set from each family has a common point, then in some family, all sets have a common point. For a survey on the extensive recent work on Helly's theorem and its generalizations, see~\cite{BaranyKalai22}.

The classical \emph{Radon's theorem} says that any $n+2$ points in $\mathbb R^n$ can be split into two parts whose convex hulls intersect. One of its best-known generalizations is \emph{Tverberg's theorem}~\cite{Tverberg}, which asserts that any $(k-1)(n+1)+1$ points in $\mathbb R^n$ can be split into $k$ parts whose convex hulls intersect. For a survey on the extensive recent work on Tverberg's theorem and its generalizations, see~\cite{BaranySoberon18}. In 1979, Reay~\cite{Reay} conjectured that weakening the assertion of Tverberg's theorem to requiring that the convex hulls of any $t$ of the $k$ part intersect, does not allow to reduce $(k-1)(n+1)+1$ to a smaller number of points, even for $t=2$. This $t$-wise  conjecture was studied extensively, and is still wide open; see~\cite{AsadaCFHPSTW18,BaranyKalai22,BaranySoberon18,PerlesS16,Roudneff09}.

\paragraph{Abstract convexity spaces.} In 1951, Levi~\cite{Levi} introduced the notion of \emph{abstract convexity spaces} in order to show that Helly's theorem is a purely combinatorial consequence of Radon's theorem. An abstract convexity space is a pair $(X,\calC)$, where $X$ is a nonempty set and $\calC\subseteq2^X$ is closed under arbitrary intersections and contains $\emptyset$ and $X$. The members of $\calC$ are called \emph{the convex sets}, and the \emph{convex hull} $\conv(P)$ of a set $P\subseteq X$ is the intersection of all convex sets containing $P$. A \emph{halfspace} is a convex set $C \in \calC$ whose complement $X \setminus C$ is convex as well. A convexity space $(X,\calC)$ is called \emph{$S_4$ separable} if for any disjoint convex sets $A,B \in \calC$, there exist disjoint halfspaces $\bar{A},\bar{B}$ such that $A \subset \bar{A},B \subset \bar{B}$. It is called \emph{$S_3$ separable} if the same separation property holds for a point and a convex set disjoint from it (see~\cite[Chapter~1.3]{vandeVel}).

All the above theorems can be formulated for abstract convexity spaces as well. The \emph{Helly number} $h(X,\calC)$ is the smallest number of sets $\ell$ for which the assertion of Helly's theorem holds for $(X,\calC)$ and the \emph{Radon number} $r(X,\calC)$ is the smallest number of points $\ell$ for which the assertion of Radon's theorem holds for $(X,\calC)$. Levi's theorem asserts that in any abstract convexity space $(X,\calC)$, we have $h(X,\calC)<r(X,\calC)$, which indeed means that Helly's theorem is a purely combinatorial consequence of Radon's theorem. In a similar vein, the colorful Helly, $k$'th Tverberg, and $t$-wise $k$'th Tverberg numbers of $(X,\calC)$, denoted by $h_c(X,\calC), r_k(X,\calC),$ and $r_{k,t}(X,\calC)$ respectively, are defined as the smallest numbers of points/sets for which the assertion of the corresponding theorem holds for $(X,\calC)$. When the discussed space is clear from the context, we omit $(X,\calC)$ and use the notations $h,r,h_c,r_k,$ and $r_{k,t}$. 

A central driving force behind the research of abstract convexity spaces since the 70's has been the \emph{partition conjecture} of Calder~\cite{Calder} and Eckhoff~\cite{Eckhoff79,Eckhoff00} (a.k.a.~Eckhoff's conjecture) saying that for any abstract convexity space and for any $k$, we have $r_k\leq(r-1)(k-1)+1$, which would imply that Tverberg's theorem is also a purely combinatorial consequence of Radon's theorem. Jamison~\cite{Jamison} showed in 1981 that $r_k$ is finite whenever $r$ is, with $r_k\leq r^{\lceil\log_2 k\rceil}$, but the conjecture remained wide open. 

\paragraph{Recent works on the colorful Helly numbers and the Tverberg numbers in abstract convexity spaces.}
In recent years, numerous works studied Helly-type problems in abstract convexity spaces (see, e.g.,~\cite{HolmsenSurvey,HolmsenPatakova,MoranYehudayoff}). In particular, Holmsen and Lee~\cite{HolmsenLee} proved that $h_c \leq r^{r^{\log r}}$ and that the \emph{fractional Helly number} $h_f$ (see definition in Section~\ref{sec:applications}) satisfies $h_f \leq h_c$. In the other direction, P\'alv\"olgyi~\cite{Palvolgyi} constructed an example in which $h_c=2^{r-1}-1$. 

In 2010, Bukh~\cite{Bukh} famously disproved Eckhoff's conjecture by constructing an abstract convexity space with $r=4$ and $r_k \geq 3(k-1)+2$. On the other hand, he proved the upper bound $r_k=O_r(k^2 \log^2 k)$. B\'{a}r\'{a}ny and Sober\'{o}n~\cite{BaranySoberon18} asked whether the weaker version of the conjecture, $r_k=O(kr)$, holds. This question was restated in several papers, sometimes under the name \emph{weak Eckhoff conjecture} (see~\cite{BaranyKalai22,CHJL,HolmsenSurvey, KellerSmorodinsky,Palvolgyi}). A major step toward a positive answer was made in 2022 by P\'alv\"olgyi~\cite{Palvolgyi} who obtained the optimal linear growth in $k$, using the aforementioned bound of Holmsen and Lee~\cite{HolmsenLee} on the fractional Helly number. He showed that for any $r,k$, we have $r_k \leq r^{r^{r^{\log r}}}k$. In the last year, several new upper bounds on $r_k$ were obtained under separability assumptions: Alon and Smorodinsky~\cite{AlonSmorodinsky} proved $r_k=O(rk^2 \log k)$ under an $S_4$ separability assumption, Keller and Smorodinsky~\cite{KellerSmorodinsky} improved the bound under the same assumption to $r_k=O(r^2 k \log k)$, and Cho, Holmsen, Jung and Liu~\cite{CHJL} proved the even stronger bound $r_k=O((r^2 \log r)k)$ under the essentially weaker $S_3$ separability assumption. While these works suggest that the weak Eckhoff's conjecture holds, at least for separable spaces, we show below that in fact, \emph{it does not}.

\subsection{Our results}

In this paper, we obtain upper and lower bounds on the colorful Helly number $h_c$, the Tverberg number $r_k$ and the $t$-wise Tverberg number $r_{k,t}$, in convexity spaces with Radon number $r$.

\paragraph{Improved upper bound on the colorful Helly number in general convexity spaces.}
Recall that the best known bound on $h_c$ in terms of $r$ is $h_c \leq r^{r^{\log r}}$, due to Holmsen and Lee~\cite{HolmsenLee}. 
Our first result is an essentially sharp single exponential upper bound. 
 \begin{theorem}
\label{thm:intro-colorful}
Every convexity space with Radon number $r\geq3$ and Helly number $h$, has
colorful Helly number $h_c \leq h2^r  \leq (r-1)2^r$.
\end{theorem}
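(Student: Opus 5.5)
We argue by contradiction. Put $m=h2^r$ and let $F_1,\dots,F_m$ be finite families of convex sets such that every rainbow selection has a common point, but no $F_i$ has a common point. By the definition of the Helly number $h$, each $F_i$ contains a subfamily $\{F_i^1,\dots,F_i^h\}$ of at most $h$ sets with empty intersection, so we may assume $|F_i|=h$ for every $i$. For every $\sigma\in[h]^m$ we fix a point $p_\sigma\in\bigcap_i F_i^{\sigma(i)}$. The goal is then to produce, by Radon partitions, a point lying in all of $F_i^1,\dots,F_i^h$ for some $i$, which contradicts the choice of the subfamilies. The inequality $h2^r\le (r-1)2^r$ is immediate from Levi's theorem $h<r$.

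The basic tool is the following observation. Suppose $x$ is a Radon point of points $q_1,\dots,q_r$ with Radon partition $(A,B)$. Then $x$ lies in every convex set that contains all points of $A$, and in every convex set that contains all points of $B$. Call a point $q$ \emph{good for $(i,j)$} if $q\in F_i^j$. Rainbow points are good for exactly one $j$ per colour; we want to boost this to all $h$ indices of one colour. Since the Radon partition of $r$ given points cannot be controlled, I would pigeonhole over its at most $2^{r-1}$ possibilities. This is where the factor $2^r$ comes from: the colours are split into $2^r$ blocks of $h$ colours each, roughly one block (or two) per potential Radon partition.

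The construction I would attempt is as follows. We build $r$ auxiliary points $q_1,\dots,q_r$, each a common point of a rainbow-type selection, so that for every bipartition $(A,B)$ of $[r]$ there is a dedicated block of $h$ colours $c_1,\dots,c_h$ with the following property: the points in one side all choose index $j$ at colour $c_j$, and the points in the other side all choose index $j$ at colour $c_j$ as well. Since distinct bipartitions use disjoint blocks, these constraints never conflict. Then whichever partition $(A,B)$ Radon produces, the Radon point lies in $F_{c_j}^{j}$ for all $j$. The difficulty is that this gives one set from each of $h$ different colours, rather than all $h$ sets of a single colour. To fix this, I would iterate the construction, replacing rainbow points by Radon points of earlier points. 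A point obtained this way is good for $(i,j)$ whenever all points on the relevant side are. The idea is to use a second round indexed by $j$ to transfer the $h$ memberships into a single colour, with each round doubling the required number of blocks.

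The main obstacle is exactly this concentration step: forcing all $h$ memberships into one colour while keeping the total number of colours at $h2^r$, rather than letting it blow up to $r^{r^{\log r}}$ as in Holmsen--Lee. My first attempt would be an induction on the number of ``free'' coordinates, in which each colour-block is used for one potential Radon side and the sides are handled one at a time. If that fails, I would fall back on a weighted, fractional version of the argument, counting over all $\sigma\in[h]^m$ how often a Radon point lands in $F_i^j$, and look for a colour $i$ that receives all $h$ indices.
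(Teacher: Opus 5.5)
Your proposal has a genuine gap. The step you call the concentration step is the entire content of the theorem, and nothing in the proposal carries it out.

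The concrete construction you describe is vacuous. If the points on both sides of the bipartition choose index $j$ at colour $c_j$, then all $r$ points already lie in $F_{c_j}^j$, so every point of their hull does too, with no Radon step needed. You end up with one set from each of $h$ colours, which a single rainbow point $p_\sigma$ already gives.

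The nontrivial variant has side $A$ choose $j_1$ and side $B$ choose $j_2$ at one colour. It yields a point in two sets of a single colour, but which colour depends on the uncontrolled Radon partition. If you feed such points into another round of Radon partitions, you must design the next-level points for every possible earlier outcome. The colour requirements then multiply from level to level instead of adding, which is the route toward Holmsen--Lee-type bounds. Nothing in your sketch supports the claim that the total stays at $h2^r$, and the ``weighted, fractional'' fallback is not specified enough to evaluate.

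The missing idea is to strengthen the hypothesis inductively instead of constructing points.

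\emph{Setup.} Call $(\calF_1,\dots,\calF_m)$ $t$-rainbow if every colour class has empty intersection, but every subfamily with at most $t$ members of each colour has a common point. A colourful Helly counterexample is $1$-rainbow, and by Helly an $h$-rainbow family must be empty.

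\emph{Key lemma.} From a $t$-rainbow family one can delete at most $(2^{r-t}-2)(h-t)$ colours and get a $(t+1)$-rainbow (or empty) family. Take a maximal collection of colour-disjoint witnesses: subfamilies of size at most $h$, with at most $t+1$ sets per colour and empty intersection. Each witness meets at most $h-t$ colours, because the $t$-rainbow property forces some colour to contribute $t+1$ of its members.

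\emph{Bounding the witnesses.} Suppose there were $2^{r-t}-1$ witnesses. There are $2^{r-t}-1$ partitions of $[r]$ into $t+1$ blocks such that every bipartition of $[r]$ is refined by one of them: split a fixed $(r-t+1)$-set in all ways into two parts and keep the other elements as singletons. Assign one such partition to each witness. Split each witness into $t+1$ columns, each with at most one set per colour. For each $a\in[r]$, drop from every witness the column whose block contains $a$, and let $x_a$ be a common point of all kept columns. This point exists by the $t$-rainbow property, since colour-disjointness leaves at most $t$ sets per colour. Take the witness whose partition refines the Radon bipartition of $x_1,\dots,x_r$. For each of its columns, all points on the opposite side kept that column. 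By exactly your observation, the Radon point then lies in every column of that witness, contradicting its empty intersection.

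\emph{Conclusion.} Deleting the colours met by the maximal collection leaves no witness, so the remaining family is $(t+1)$-rainbow. Because the uncontrolled Radon outcome is absorbed into a proof by contradiction at each level, the costs add across levels, giving
\[
h_c\le 1+\sum_{t=1}^{h-1}(2^{r-t}-2)(h-t)<h2^r .
\]
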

In view of P\'alv\"olgyi's~\cite{Palvolgyi} construction of a general convexity space with $h_c=2^{r-1}-1$, this bound is sharp up to a factor of $O(r)$. 
By the aforementioned result of Holmsen and Lee~\cite{HolmsenLee}, Theorem~\ref{thm:intro-colorful} yields the same upper bound $h_f \leq (r-1)2^r$ on the \emph{fractional Helly number} in general convexity spaces. We note that recently, Holmsen and P\'{a}t\'{a}kova~\cite{HolmsenPatakova} obtained the stronger bound $h_f \leq 2^r$ under the $S_3$ separability assumption.

By general methods of Alon, B\'ar\'any, F\"uredi and Kleitman~\cite{ABFK} and of
Alon, Kalai, Matou\v{s}ek and Meshulam~\cite{AKMM}, our bound on $h_f$ yields improved quantitative bounds for the \emph{selection lemma}~\cite{Barany82}, the \emph{weak $\varepsilon$-net theorem}~\cite{BaranyFurediLovasz} and the \emph{$(p,q)$-theorem}~\cite{AlonKleitman}, in general abstract convexity spaces. The details of these applications (including the required definitions) are presented in Section~\ref{sec:applications}.

\paragraph{Improved upper bound on the Tverberg number in general convexity spaces.}
Recall that the best known bound on $r_k$ in terms of $r$ that has the correct linear order in $k$ is $r_k \leq r^{r^{r^{\log r}}}k$, due to P\'alv\"olgyi~\cite{Palvolgyi}. 
The main result of this paper is a quasipolynomial (in $r$) upper bound. 
\begin{theorem}
\label{thm:intro-radon}
Every convexity space with Radon number $r\geq3$ satisfies $r_k\leq O(r^{\lceil \log_2 r \rceil})k$ for every $k\geq2$.
\end{theorem}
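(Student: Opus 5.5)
We follow P\'alv\"olgyi's scheme for linear Tverberg bounds, but feed in the new colorful Helly bound (Theorem~\ref{thm:intro-colorful}) in place of the Holmsen--Lee bound, and organize the argument recursively so that the loss per step is only polynomial in $r$.

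The intended chain is as follows. Theorem~\ref{thm:intro-colorful} gives $h_c\le (r-1)2^r$, and by Holmsen--Lee the fractional Helly number then satisfies $h_f\le h_c$. The fractional Helly theorem with this $h_f$, together with Jamison's bound $r_m\le r^{\lceil\log_2 m\rceil}$ for small $m$, yields a ``positive fraction'' Tverberg step. Concretely, given a set $P$ of $n$ points, consider the family of convex hulls of all $r_m$-subsets. A positive fraction (depending on $h_f$) of the $h_f$-tuples of these subsets intersect, because any $h_f\cdot r_m$ points can be split into $m\geq h_f$ groups with a common point. Fractional Helly then gives a point $p$ lying in the convex hulls of a positive fraction of the $r_m$-subsets.

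We then extract linearly many disjoint such subsets. We greedily remove disjoint $r_m$-subsets whose hulls contain $p$, and repeat on the remaining points. This requires a version of the fractional statement stable under deletion, i.e.\ that the fraction of ``good'' subsets on the surviving points stays bounded below. Following P\'alv\"olgyi, we apply the argument to the remaining set each time and glue the pieces by a separation/chaining argument. Alternatively, we use the standard reduction: if $r_{k}\le c k$ holds for some $k$, then $r_{k'}\le c' k'$ for all $k'$ by Jamison-style recursion $r_{k_1k_2}\le r_{k_1}r_{k_2}$ combined with linear-size partitioning.

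The bound we aim for is $r_k\le C(r)\,k$ with $C(r)$ governed by $r_m$ at $m\approx r$, i.e.\ $C(r)=O(r^{\lceil\log_2 r\rceil})$. For this we choose the block size $m=\Theta(r)$ (at least $h\le r-1$, which is the source of $\lceil\log_2 r\rceil$ in the exponent via Jamison). Each point set of size $C k$ is partitioned into $k$ blocks of size $r_m$ sharing a point. The key is that the sets actually being Helly-tested are intersections of $m$-wise Tverberg configurations. The exponential constant $2^r$ from $h_c$ then enters only in the fraction, which is absorbed by using the colorful Helly theorem directly: we color the $k$ parts and apply colorful Helly to an $h_c$-coloring rather than fractional Helly, so that only polynomial factors in $r$ multiply the $r^{\log r}$ coming from Jamison.

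The main obstacle is this last point: preventing the $2^r$ (or a fractional-Helly fraction like $1/h_f^{h_f}$) from entering the constant multiplicatively. To do so, I would first try a direct colorful argument. Suppose $P$ has no $k$-partition with a common point, and take a maximal collection of disjoint $r_m$-blocks whose hulls intersect pairwise in a structured way. We then show via colorful Helly with $h_c$ color classes, each consisting of blocks, that some color class has a common point. Since the color classes can be made to have size $\approx k/h_c\cdot r$, this costs only a factor $h_c/r$ relative to the number of blocks. The remaining check is that the factor $2^r$ is dominated by $r^{\lceil\log_2 r\rceil}$.
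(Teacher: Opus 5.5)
Your plan does not resolve the one issue the statement hinges on: $h_c$, which can be of order $2^r$, must not enter the linear coefficient except through its logarithm. Your proposed fix (colorful Helly on $h_c$ color classes of blocks, ``costing only a factor $h_c/r$'') still leaves a multiplicative factor of order $2^r$. Your ``remaining check'' that $2^r$ is dominated by $r^{\lceil\log_2 r\rceil}$ is false: $r^{\lceil\log_2 r\rceil}\le 2^{(\log_2 r+1)\log_2 r}=2^{O(\log^2 r)}=o(2^r)$. Already at $r=64$ this is $2^{36}$ versus $2^{64}$. So even a completed version of this plan would give at best something like $2^r r^{\lceil\log_2 r\rceil}k$.

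The earlier steps also break down.
\begin{itemize}
\item Your justification of the positive fraction requires $m\ge h_f$, and $h_f$ may be as large as $(r-1)2^r$. Jamison then only gives $r_m\le r^{r+O(\log r)}$. When you later set $m=\Theta(r)$, that justification no longer applies.
\item The fractional Helly constant $\beta$ (doubly exponential in $L$, cf.\ Theorem~\ref{thm:applications-fh}) would enter multiplicatively. This is exactly the source of P\'alv\"olgyi's tower.
\item The ``standard reduction'' via $r_{k_1k_2}\le r_{k_1}r_{k_2}$ cannot produce linear growth: iterating $r_k\le ck$ gives only $r_{k^j}\le (ck)^j$.
\item No separation or chaining argument is available in a general convexity space.
\end{itemize}

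The missing idea is to make the failure probability of a single $h$-tuple of parts decay exponentially in $N/k$. Then the required smallness $(4h^2h_c)^{-h}$ costs only an additive $h\log(4h^2h_c)=O(r^2)$. The paper does this as follows.
\begin{itemize}
\item Color $N$ points randomly with $2k$ colors and fix an $h$-tuple $I$ of colors. The colorings of the points colored in $I$ whose $h$ hulls have empty intersection miss, on every $r_h$-subset, a fixed $h$-Tverberg pattern.
\item By the Karpovsky--Milman $q$-ary Sauer--Shelah lemma, the probability of such a bad coloring is at most $\Prob[\operatorname{Bin}(N,1/2k)<r_h]$. This is at most $\eta$ once $N/2k\ge 8(r_h+\log(1/\eta))$ (Proposition~\ref{prop:random-bins}).
\item Take $\eta=(4h^2h_c)^{-h}$. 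Holmsen's matching argument (Lemma~\ref{lem:near-helly}) uses colorful Helly only to show that so few bad $h$-tuples force $k$ of the $2k$ hulls to intersect.
\end{itemize}
This yields $r_k\le 20k\bigl(r_h+h\log(4h^2h_c)\bigr)$. Only $r_h$ with $h<r$ is needed, which Jamison bounds by $r^{\lceil\log_2 r\rceil}$; neither $r_{h_f}$ nor a multiplicative $h_c$ or $\beta$ ever appears.
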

This result follows from a stronger (but more complex) upper bound in terms of $h,h_c$ and the Tverberg number $r_h$ (see Theorem~\ref{thm:main}). The only term that precludes the bound from being polynomial in $r$ is $r_h$, for which we could not find a replacement to Jamison's classical bound $r_h \leq r^{\lceil \log_2 h \rceil}$. Thus, any polynomial (in $r$) bound on the specific Tverberg number $r_h$ will lead to the bound $r_k \leq \mathrm{poly}(r) \cdot k$. 

Our proof uses the improved bound on $h_c$ (i.e., Theorem~\ref{thm:intro-colorful}), along with 
a variant of the Sauer-Shelah lemma for partitions into many parts, due to Karpovsky and Milman~\cite{KarpovskyMilman}.

\paragraph{Improved upper bounds on the $t$-wise Tverberg number in convexity spaces.}
Our next results are upper bounds on $r_{k,t}$ in terms of $r$. We prove the following: 
\begin{theorem}\label{thm:intro-reay-general}
Let $(X,\calC)$ be a convexity space with Radon number $r$. Then:
\begin{enumerate}
    \item For any $2 \leq t \leq k$, we have 
\[
r_{k,t}(X,\calC) = O  \left(\left(r^{\lceil\log_2t\rceil}+\log\binom kt\right) k \right) .
\]    
In particular, $r_{k,2}(X,\calC) = O(k(r+\log k))$.

\item If, in addition, $(X,\calC)$ is $S_4$ separable, then $r_{k,t}(X,\calC) = O((rt \log t)k)$ for $t \leq r$ and $r_{k,t}(X,\calC) = O((r^2 \log r)k)$ for $t \geq r$.
In particular, $r_{k,t}(X,\calC)=O(rk)$ for any constant $t$.
\end{enumerate}
\end{theorem}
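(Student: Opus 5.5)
For part 1 I will use a random partition combined with the Jamison bound for $r_t$. Let $N = Ck\,\bigl(r^{\lceil\log_2 t\rceil}+\log\binom kt\bigr)$ and let $P$ be a multiset of $N$ points. I first apply Jamison's bound $r_t\le r^{\lceil \log_2 t\rceil}$ repeatedly, greedily extracting disjoint Tverberg configurations. Each configuration uses at most $r_t$ points and splits into $t$ parts whose hulls share a point.

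The key consequence is the following. If $P$ is partitioned into $k$ classes, and some family $\mathcal{F}$ of disjoint "blocks" (each of size $m$) is such that every block splits into $t$ pieces landing in $t$ prescribed classes with intersecting hulls, then those $t$ classes have intersecting hulls, by monotonicity of $\conv$. So it suffices to guarantee, for every $t$-subset $T$ of the $k$ classes, some witnessing block.

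I plan to do this by a random coloring. I partition $P$ into $M=N/r_t$ disjoint blocks of size $r_t$, and each block has a Tverberg $t$-partition $B=B_1\cup\dots\cup B_t$. I then color randomly block-wise: for each block, choose a uniformly random injection $\sigma:[t]\to[k]$ and send $B_i$ to class $\sigma(i)$. For a fixed $T$, a single block witnesses $T$ with probability $t!/(k)_t=1/\binom kt$. Since blocks are independent, the probability that no block witnesses $T$ is $(1-1/\binom kt)^{M}\le e^{-M/\binom kt}$.

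This is the main obstacle: a union bound over $\binom kt$ sets requires $M\gtrsim \binom kt\log\binom kt$, which is far too many. So the naive scheme fails, and I would replace uniform injections by a structured design. Here I use a trick: choose the classes so that each block is spread over $t$ classes, and reuse points. Concretely, I split the points into $k$ groups $G_1,\dots,G_k$ of size $s=N/k$, intended as the final classes. The requirement becomes: for each $T$, there are points $x_i\in\conv(G_i)$, $i\in T$, with a common point. Equivalently, the family $\{\conv(G_i)\}$ must be $t$-wise intersecting.

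To achieve this I would take $G_i$ to be random subsets of size $s$ from a random permutation, and argue via the fractional Helly / Radon structure that each random $s$-set contains a "deep" point. Specifically, I would pick a Tverberg point $p$ of a partition of all of $P$ into $\lfloor N/r_t\rfloor$... This still doesn't directly give it. Instead I would use the honest bound: the random injection gives failure probability $\binom kt e^{-M/\binom kt}$, and I would replace independence across $T$ by sampling $\sigma$ from a small family of $t$-perfect hash functions $[k]\to[t]$. There is such a family of size $e^{O(t)}\log k$ in which every $T$ is mapped bijectively by some member. With one block per hash function (per class of its fibres), this costs $r_t\cdot e^{O(t)}\log k$ blocks' worth of points per class arrangement, yielding the $\log\binom kt$ term after balancing.

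For part 2 ($S_4$) I would follow the Cho–Holmsen–Jung–Liu / Keller–Smorodinsky approach. By separation, if the $t$ classes' hulls miss each other, halfspaces separate them. Combining this with the fractional Helly bound $h_f\le 2^r$ and a counting argument on the number of separating halfspace patterns (VC-dimension $O(r)$ of halfspaces in the trace), a random equipartition into $k$ classes fails for a fixed $t$-tuple with probability $e^{-\Omega(N/k)}$. A union bound over $t$-tuples then gives $N=O(rt\log t\cdot k)$ for $t\le r$, and the known $O(r^2\log r\,k)$ bound for $r_k$ covers $t\ge r$.
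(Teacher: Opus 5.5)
\textbf{Part (1): genuine gap.} No version of your disjoint-block scheme can give a bound linear in $k$, and the perfect-hashing repair does not change this. Every point lies in exactly one class, so a block $B=B_1\sqcup\cdots\sqcup B_t$ certifies, via monotonicity of $\conv$, only the single $t$-tuple of classes that receive $B_1,\dots,B_t$. A hash function $f\colon[k]\to[t]$ cannot place $B_i$ into all classes of the fibre $f^{-1}(i)$ at once. Certifying all $\binom kt$ tuples by disjoint blocks therefore needs at least $\binom kt$ blocks, i.e.\ $\Omega\bigl(t\binom kt\bigr)$ points, which is already quadratic in $k$ for $t=2$. Your count of $r_t e^{O(t)}\log k$ per class would in any case give a product, not the sum $r_t+\log\binom kt$.

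The missing idea is to let \emph{all} $r_t$-subsets, overlapping ones included, serve as potential witnesses simultaneously. Color the points independently and uniformly in $[k]$ and fix $T\in\binom{[k]}{t}$. Condition on the multiset $Q$ of points whose colors lie in $T$, and fix for every $r_t$-subset $A\subseteq Q$ a Tverberg pattern $\varphi_A\colon A\to T$. A coloring with $\bigcap_{i\in T}\conv(P_i)=\emptyset$ avoids $\varphi_A$ on every such $A$, so the bad colorings shatter no $r_t$-set. The Karpovsky--Milman version of Sauer--Shelah for $q$-colorings then bounds the failure probability by $\Pr[\mathrm{Bin}(N,1/k)<r_t]$, which is at most $\eta$ once $N/k\geq 8(r_t+\log(1/\eta))$. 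Only this bound, exponentially small in $N/k$, lets the union bound over the $\binom kt$ tuples cost merely an additive $\log\binom kt$. This gives $N=O\bigl(k(r_t+\log\binom kt)\bigr)$, and Jamison's bound finishes.

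\textbf{Part (2): also a gap.} Your counting argument (Sauer--Shelah on halfspace traces, with every point forced outside one of the halfspaces) does give failure probability $e^{-\Omega(N/k)}$ for a fixed tuple once $N/k\gtrsim rt\log t$. But the union bound over tuples then forces $N/k\gtrsim\log\binom kt\geq t\log(k/t)$. You therefore get only $O\bigl(k(rt\log t+\log\binom kt)\bigr)$, whose coefficient grows with $k$. This loss is intrinsic to random partitions into $k$ classes. Already on the line ($S_4$ separable, $r=3$), take a random equipartition into $k$ classes of size $m$. Unless $m\geq\log_2 k-O(1)$, with probability bounded away from zero some class lies entirely below the median and another entirely above it. The fractional Helly bound you mention does not help with this.

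The paper avoids any union bound over tuples by using a property of single parts. Let $s=\min\{t,h\}$ and let $d=r-1$ be the halfspace VC-dimension. The paper cuts a random permutation of $P$ into $2k$ blocks of size $O(ds\log s)$; each block is a $1/s$-net for halfspaces with probability at least $3/4$. Hence some permutation yields $k$ disjoint nets $Y_1,\dots,Y_k$, and the parts are taken with $P_j\supseteq Y_j$. Suppose some $t$ hulls had empty intersection. Helly gives $|J|\leq s$ with $\bigcap_{j\in J}\conv(P_j)=\emptyset$. $S_4$ separability gives halfspaces $H_j\supseteq\conv(P_j)$ with $\bigcap_{j\in J}H_j=\emptyset$, so their complements cover $P$. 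Some $X\setminus H_j$ then contains at least $|P|/s$ points and must meet $Y_j\subseteq H_j$. This contradiction holds for all tuples at once. Your appeal to the known $O((r^2\log r)k)$ bound on $r_k$ for $t\geq r$ is fine.
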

The proof of~(1) uses the Sauer-Shelah type argument from the proof of Theorem~\ref{thm:intro-radon}. For $t=2$, this general-space bound is stronger than the bound $r_{k,2}=O(rk \log (rk))$ obtained in~\cite{KellerSmorodinsky} under an additional $S_4$ separability assumption.

The proof of~(2) combines an $\varepsilon$-net argument of the type used by Cho et al.~\cite{CHJL} with an observation of Alon and Smorodinsky~\cite{AlonSmorodinsky} that uses $S_4$ separability to leverage a family of non-intersecting convex sets into a family of non-intersecting halfspaces containing them. For $t \geq r$, the bound matches the $r_k=O((r^2 \log r)k)$ upper bound obtained by Cho et al.~\cite{CHJL} under the essentially weaker $S_3$ separability assumption. For $t < r$, our bound is sharper, and for any constant value of $t$, it is sharp up to a constant factor. 

We also present an alternative proof of the bound $r_{k} \leq O((r^{2}\log r)k)$ in $S_4$ separable spaces that does not use the $\varepsilon$-net theorem, and instead uses our new bound on the colorful Helly number (i.e., Theorem~\ref{thm:intro-colorful}) and the Sauer-Shelah lemma.

\paragraph{Counterexample to the weak Eckhoff conjecture.}
Our most surprising result is a construction which shows that the weak Eckhoff's conjecture fails even in $S_4$ separable convexity spaces and that the analogue of Reay's conjecture to separable convexity spaces fails badly as well. We prove the following:

\begin{theorem}
\label{thm:intro-lower}
For every sufficiently large $n$, there is a finite $S_4$ separable convexity space $(X_n,\calC)$ with Radon number $r=\Theta(\log n)$, such that:
\begin{enumerate}
    \item For any $64\log^2 r \leq k \leq n$, we have $r_k(X_n,\calC)=\Omega(rk\log k)$. Furthermore, $r_n(X_n,\calC)=\Theta(r^2 n)$.

    \item $r_{n,t}(X_n,\calC) = \Theta(rn \min(r,t))$. In particular, for any constant $t$, we have $r_{n,t}(X_n,\calC)=\Theta(rn)$.
\end{enumerate}
\end{theorem}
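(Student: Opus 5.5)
We need a construction with $r=\Theta(\log n)$ on a finite set $X_n$ that is $S_4$ separable, in which $r_k$ is large (order $r^2 n$) while $r_{n,2}$ stays of order $rn$. The plan is to build $\calC$ from halfspaces. Any convexity space whose convex sets are exactly the intersections of a family $\mathcal H$ of halfspaces, with $\mathcal H$ closed under complement, is automatically $S_3$ separable. To get $S_4$ separability, I will choose $\mathcal H$ so that disjoint intersections of halfspaces are always separated by a single halfspace of $\mathcal H$; a small enough family of "threshold" halfspaces allows this to be checked directly.

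\textbf{Construction.} Take $X_n=[n]\times[m]$ with $m=\Theta(\log n)$, so there are $n$ "blocks" of $m$ points each. Let $\mathcal H$ be generated by two kinds of halfspaces:
\begin{itemize}
\item sets of the form $A\times[m]$, where $A$ ranges over a family of subsets of $[n]$ with VC-type behaviour giving Radon number $\Theta(\log n)$ on blocks;
\item coordinate threshold sets inside blocks, of the form $\{(i,j): j\le c\}$, together with their complements.
\end{itemize}
The Radon number is controlled as follows. A Radon-independent set of size $s$ must be shattered in the appropriate sense by $\mathcal H$. Since the number of halfspaces is polynomial in $n$, we get $s=O(\log n)$. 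Conversely, an explicit set of about $\log n$ blocks (a binary-code family) realizes $r=\Omega(\log n)$.

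\textbf{Upper bounds.} For $r_{n,t}\le O(rn\min(r,t))$ and $r_n\le O(r^2n)$, I will use the general bounds already stated in Theorem~\ref{thm:intro-reay-general}(2). For $r_n$ I will use the $S_4$ bound $O((r^2\log r)n)$, refined by a direct counting argument. Because $\mathcal H$ has only two "directions", the two layers can be handled separately: the block layer needs $O(r)$ points per part, and the threshold layer multiplies this by $O(r)$. Note that $r_k\le r_{k,t}$ for $t\le k$ goes the other way, so the upper bound on $r_n$ genuinely needs this separate argument.

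\textbf{Lower bounds.} This is the main obstacle. I need a point set $P$ of size about $c\,r\min(r,t)\,n$ such that every partition into $n$ parts has $t$ parts whose hulls are separated by halfspaces. The strategy is as follows.
\begin{itemize}
\item If a part's hull meets the common point $x$, then every halfspace containing the part contains $x$.
\item Choose $P$ so that, for each candidate point $x$, a part must contain roughly $r\cdot\min(r,t)$ points drawn from "independent" directions to avoid being cut off. This uses the fact that $\log n$ threshold levels can each be exploited by a halfspace.
\item Use a pigeonhole argument over the $n$ parts: some part has fewer than the required number of points in some direction, so a halfspace separates it from $x$.
\item For $r_k=\Omega(rk\log k)$ with $k\ge 64\log^2 r$, restrict the construction to $k$ blocks. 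The extra $\log k$ factor comes from a coupon-collector style requirement that every part meet $\Omega(\log k)$ distinct block-halfspace classes.
\end{itemize}
I expect the hardest step to be matching constants so that the lower and upper bounds coincide up to $\Theta$. Verifying $S_4$ separability of the chosen $\mathcal H$ also needs care, and I would do it directly by a case analysis on the two halfspace types.
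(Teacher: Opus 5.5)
There is a genuine gap. The construction is not pinned down, and the lower bound, which is the heart of the theorem, has no working mechanism.

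\textbf{The construction.} The family $\mathcal A$ is never specified, so neither $r=\Theta(\log n)$ nor $S_4$ separability can be checked. As written, with the same threshold $c$ in every block, your convex sets are exactly the products $B\times I$, where $B$ comes from the convexity generated by $\mathcal A$ on $[n]$ and $I$ is an interval of $[m]$. Split a multiset at the median second coordinate, take a $k$-Tverberg partition of the lower half in the first factor, and give each part one point of the upper half. This shows that $r_k$ of the product is at most about $\max\{2r_k([n],\mathcal A),2k+1\}$. So the threshold layer cannot ``multiply by $O(r)$'', and the whole difficulty is pushed into the unspecified $\mathcal A$. Moreover, $|X_n|=nm=\Theta(rn)$, so your space cannot even contain $\Theta(r^2n)$ distinct points, whereas the paper's lower bounds hold for sets.

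\textbf{The pigeonhole step.} It rests on a per-part threshold that is false in every space with Radon number $r$. The Radon point of any $r$ points of $P$ lies in the hulls of two disjoint parts of size less than $r$, and a point of $P$ lies in the hull of a one-point part. So ``for each candidate $x$, a part must contain about $r\min(r,t)$ points'' cannot hold. What must be excluded is the joint event over all partitions and all candidate common points.

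\textbf{The missing idea.} It is a random construction in a space with many independent coordinates. The paper uses box convexity on $X_n=[q]^n$ with $q=\lceil\log_2 n\rceil+2$. Slabs give $S_4$ separability, and counting coordinate-separated bipartitions gives $q+1\le r\le q+\log_2 n+2$. It samples $N=\lceil\alpha k(q-1)\log k\rceil$ uniform points and notes that, for a fixed assignment with part sizes $s_i$ and a fixed $y$, $\Prob\bigl[y\in\bigcap_i\conv(\text{part } i)\bigr]=\prod_i\bigl(1-(1-1/q)^{s_i}\bigr)^n$. Jensen reduces this to equal parts, giving at most $\exp(-\tfrac12 nk^{1-\alpha})$. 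That beats a union bound over the $k^N$ assignments and the $q^n$ points $y$ once $k^{1-\alpha}\ge 8\log r$, which for $\alpha=1/2$ is exactly $k\ge 64\log^2 r$. The factor $r\log k$ per part is a coupon-collector effect over $n$ coordinates, not a product of two directions.

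\textbf{Part~(2).} You never convert $t$-wise intersection into full intersection. The paper restricts to the subcube $[t]^n$, whose Helly number is exactly $t$, so $t$-wise intersecting hulls there intersect, and reruns the random argument with $q$ replaced by $t$. For $t\ge q$, the Helly number $q$ gives $r_{n,t}=r_n$.

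\textbf{Upper bounds.} Theorem~\ref{thm:intro-reay-general}(2) only yields $O((rt\log t)n)$ and $O((r^2\log r)n)$, off by $\log t$ and $\log r$ from the claimed $\Theta$. Your ``direct counting argument'' is not supplied. The paper instead colors randomly with $n$ colors and uses independence of the $q$ fibers in each coordinate to get $r_{n,t}\le\lceil n(\log n+\log\binom nt+q\log t)\rceil+1$.
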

The first claim implies that the weak Eckhoff conjecture fails even in separable convexity spaces, and provides the lower bound $r_k=\Omega(r^2 k)$ for an infinite sequence of pairs $(r,k)$. In particular, it shows that the $r_k=O((r^2 \log r)k)$ upper bound in $S_3$ separable spaces obtained by Cho et al.~\cite{CHJL} is sharp up to a factor of $O(\log r)$. 

The second claim implies that the analogue of Reay's conjecture to $S_4$ separable convexity spaces fails and provides a gap of factor $\Omega(r)$ between $r_k$ and $r_{k,2}$ for an infinite sequence of pairs $(r,k)$. In particular, it shows that for all $t \leq r$, the $r_{k,t}=O((rt \log t)k)$ upper bound in $S_4$ separable spaces of Theorem~\ref{thm:intro-reay-general} is sharp up to a factor of $O(\log t)$. 

The spaces $X_n$ are variants of \emph{box convexity} on the grid $[q]^n$. A counting argument bounds from above their Radon number, while a probabilistic argument provides the lower bounds on the Tverberg and the $t$-wise Tverberg numbers. 

\paragraph{Organization of the paper.} 
The paper is organized as follows. 
In Section~\ref{sec:colorful} we prove the new upper bound on the colorful Helly number (i.e.,  Theorem~\ref{thm:intro-colorful}). In Section~\ref{sec:upper} we  prove the new upper bound on the Tverberg number (i.e., Theorem~\ref{thm:intro-radon}). 
In Section~\ref{sec:lower} we present the construction that refutes the weak Eckhoff conjecture and prove  Theorem~\ref{thm:intro-lower}. In Section~\ref{sec:reay} we prove the new upper bounds on the $t$-wise Tverberg number (i.e., Theorem~\ref{thm:intro-reay-general}). In  Section~\ref{sec:applications} we briefly present applications of our results to improved quantitative bounds for the fractional Helly theorem, the selection lemma, the weak $\varepsilon$-net theorem, and the $(p,q)$ theorem. In Appendix~\ref{app:s4} we present an alternative proof of an upper bound on the Tverberg number in $S_4$ separable spaces (i.e., the case $t=k$ of Theorem~\ref{thm:intro-reay-general}(2)). We conclude the paper with a few open problems in  Section~\ref{sec:conclusion}.

Throughout the paper, we state the results in the setting of multisets, where labeled copies of the same object are allowed, as this is needed in some of the proofs. Such `repetitions' affect counting arguments, but not convex hulls or intersections. Obviously, the upper bounds obtained in the multiset setting hold in the standard setting of sets. In the lower bound constructions, we prove that the elements of the multiset can be made distinct, and hence the constructions apply in the standard setting of sets as well.   We write $\log$ for the natural logarithm unless $\log_2$ is indicated explicitly.

\section{New Upper Bound on the Colorful Helly Number}\label{sec:colorful}

In this section we prove Theorem~\ref{thm:intro-colorful}, which asserts that in any abstract convexity space 
$(X,\calC)$ with Radon number $r \geq 3$ and Helly number $h$, the colorful Helly number is at most $h2^r$, which is at most $(r-1)2^r$, as by Levi's theorem~\cite{Levi}, $h\leq r-1$. 

\paragraph{Definitions and outline of the proof.}
Let $\calF_1,\ldots,\calF_m$ be finite nonempty labeled families of convex sets, regarded as $m$ \emph{color classes}. The family (of families) $(\calF_1,\ldots,\calF_m)$ is said to be \emph{$t$-rainbow} if every color class $\calF_i$
has empty total intersection, while every labeled subfamily containing at
most $t$ members of each color class has a common point. Thus, a
$1$-rainbow family is precisely a counterexample to the colorful Helly property.

The proof of the theorem is an iterative process that begins with a $1$-rainbow family and at the $t$'th step, `upgrades' a $t$-rainbow family into a $(t+1)$-rainbow family by removing several color classes. In this transition process, a labeled subfamily $T$ is called a \emph{witness} if $|T|\leq h$, it
contains at most $t+1$ members of any one color class, and the intersection of its elements is
empty. Every failure of the $(t+1)$-rainbow property contains such a witness, by the definition of the Helly number $h$. To make the transition, several color classes are removed so that no witnesses remain, and thus, the resulting family is $(t+1)$-rainbow. 

As by the definition of the Helly number, an $h$-rainbow family must be empty, the process terminates after at most $h-1$ steps, with all color classes removed. Hence, if we show that the process can be performed in such a way that only a few colors are removed at each step, this will provide an upper bound on the size of the initial $1$-rainbow family, which is an upper bound on the maximum size of a family that does not satisfy the colorful Helly property. The transition from a $t$-rainbow family to a $(t+1)$-rainbow family is demonstrated in Figure~\ref{fig:rainbow-step}.   

The core idea of our proof is similar to that of Holmsen and Lee~\cite{HolmsenLee}; we discuss the relation between the proofs at the end of the section.

\begin{figure}[tbp]               
 \centering 
 \resizebox{0.72\linewidth}{!}{%
   \begin{tikzpicture}[
  x=1cm,y=1cm,font=\small,line cap=round,line join=round,
  count/.style={decorate,decoration={brace,amplitude=3pt},line width=.5pt}
]
\definecolor{rainbowTeal}{RGB}{39,119,120}
\definecolor{rainbowRed}{RGB}{185,83,67}
\definecolor{rainbowViolet}{RGB}{112,89,151}
\definecolor{rainbowEdge}{RGB}{77,112,140}
\tikzset{intersection/.style={draw=rainbowEdge!85,line width=.7pt,
  fill=rainbowEdge!7}}

\node[font=\normalsize] at (2.00,4.48) {$t$-rainbow};
\node[font=\normalsize] at (7.475,4.48) {$(t+1)$-rainbow};

\path[draw=rainbowRed,line width=.75pt,
      dash pattern=on 2.6pt off 1.7pt,fill=rainbowRed!4]
  (-.02,3.04)
  .. controls (-.03,3.49) and (.30,3.70) .. (.73,3.64)
  .. controls (1.50,3.55) and (2.53,3.72) .. (3.29,3.64)
  .. controls (3.94,3.67) and (4.14,3.30) .. (4.02,2.73)
  .. controls (4.00,2.29) and (3.57,2.10) .. (3.18,2.18)
  .. controls (2.99,2.20) and (2.76,2.19) .. (2.60,2.16)
  .. controls (2.36,2.11) and (2.40,1.75) .. (2.16,1.71)
  .. controls (1.83,1.57) and (1.59,1.75) .. (1.56,1.99)
  .. controls (1.54,2.12) and (1.42,2.19) .. (1.22,2.17)
  .. controls (1.02,2.16) and (.84,2.12) .. (.63,2.17)
  .. controls (.03,2.09) and (-.11,2.59) .. (-.02,3.04) -- cycle;

\path[intersection]
  (.12,3.04)
  .. controls (.09,3.41) and (.40,3.55) .. (.74,3.50)
  .. controls (1.47,3.41) and (2.56,3.58) .. (3.27,3.50)
  .. controls (3.82,3.52) and (3.97,3.28) .. (3.88,2.76)
  .. controls (3.86,2.37) and (3.49,2.20) .. (3.18,2.28)
  .. controls (2.38,2.41) and (1.39,2.23) .. (.66,2.30)
  .. controls (.19,2.26) and (.02,2.62) .. (.12,3.04) -- cycle;

\path[intersection]
  (6.34,2.86)
  .. controls (6.33,3.35) and (6.56,3.58) .. (6.99,3.50)
  .. controls (7.41,3.42) and (7.82,3.54) .. (8.19,3.50)
  .. controls (8.65,3.50) and (8.70,3.13) .. (8.64,2.57)
  .. controls (8.77,2.06) and (8.55,1.65) .. (8.13,1.72)
  .. controls (7.69,1.80) and (7.35,1.68) .. (6.87,1.74)
  .. controls (6.38,1.70) and (6.27,2.21) .. (6.34,2.86) -- cycle;

\node[text=rainbowEdge] at (2.00,3.98) {$\bigcap S\ne\varnothing$};
\node[text=rainbowEdge] at (7.475,3.90) {$\bigcap S'\ne\varnothing$};
\node[text=rainbowRed] at (2.00,1.36)
  {$\bigcap\bigl(S\cup\{F\}\bigr)=\varnothing$};

\foreach \after/\i/\xx/\col in {
  0/1/.65/rainbowTeal,
  0/2/2.00/rainbowRed,
  0/3/3.35/rainbowViolet,
  1/1/6.80/rainbowTeal,
  1/3/8.15/rainbowViolet}
{
  \fill[\col] (\xx,3.18) circle[radius=2pt];
  \node[text=\col,font=\scriptsize,inner sep=0pt]
    at (\xx,2.875) {$\vdots$};
  \fill[\col] (\xx,2.57) circle[radius=2pt];
  \fill[\col] (\xx,2.02) circle[radius=2pt];
  \node[text=\col!65,font=\scriptsize,inner sep=0pt]
    at (\xx,1.03) {$\vdots$};
  \fill[\col!65] (\xx,.56) circle[radius=2pt];
  \node[text=\col] at (\xx,.15) {$\mathcal F_{\i}$};
}

\node[anchor=west,inner sep=1pt,text=rainbowRed]
  at (2.12,2.025) {$F$};

\draw[count,draw=rainbowEdge] (-.30,2.41) -- (-.30,3.34);
\node[anchor=east,text=rainbowEdge] at (-.46,2.875) {$t$};
\draw[count,draw=rainbowEdge] (8.93,3.34) -- (8.93,1.83);
\node[anchor=west,text=rainbowEdge] at (9.09,2.585) {$t+1$};

\draw[-{Stealth[length=2.5mm,width=1.8mm]},line width=.8pt]
  (4.30,2.58) -- (5.92,2.58);
\node[text=rainbowRed] at (5.11,2.97) {$-\,\mathcal F_2$};
\node[text=rainbowRed] at (2.00,-.22) {$\times$};

\end{tikzpicture}%
 }               
 \caption{Illustration of the transition from a $t$-rainbow family to a $(t+1)$-rainbow family, by removing the offending color $\calF_2$.\\
 }
 \label{fig:rainbow-step}   
\end{figure}


\medskip In order to bound the number of sets removed at each step, we use the following simple lemma.

\begin{lemma}[Partition covering lemma]\label{lem:partition-cover}
Let $1\leq t\leq r-2$. There is a set of $2^{r-t}-1$ partitions of $[r]$ into
$t+1$ nonempty blocks such that every nontrivial bipartition of $[r]$ is
refined by at least one of them.
\end{lemma}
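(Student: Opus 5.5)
The plan is a direct construction: keep $t-1$ elements as singleton blocks and split the remaining elements into exactly two blocks in every possible way. Let $R=\{t,t+1,\ldots,r\}$, so $|R|=r-t+1\geq 3$ because $t\leq r-2$. For each unordered bipartition $\{B,R\setminus B\}$ of $R$ into two nonempty parts, define the partition
\[
P_B=\bigl\{\{1\},\{2\},\ldots,\{t-1\},\,B,\,R\setminus B\bigr\}
\]
of $[r]$. Each $P_B$ has $(t-1)+2=t+1$ nonempty blocks. When $t=1$ there are no singletons, and $P_B$ is just a bipartition of $R=[r]$.

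First I count the partitions. The ordered pairs $(B,R\setminus B)$ with both parts nonempty number $2^{r-t+1}-2$. Identifying $B$ with $R\setminus B$ gives $2^{r-t}-1$ unordered bipartitions. Distinct unordered bipartitions give distinct $P_B$, so the family has exactly $2^{r-t}-1$ members.

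Next I check the covering property. Recall that a partition $P$ refines a bipartition $\{A,[r]\setminus A\}$ exactly when every block of $P$ lies entirely in $A$ or entirely in $[r]\setminus A$. Fix a nontrivial bipartition $\{A,[r]\setminus A\}$. Singleton blocks always lie on one side, so only the two blocks inside $R$ matter. There are two cases.
\begin{itemize}
\item If both $A\cap R$ and $R\setminus A$ are nonempty, take $B=A\cap R$. Then $B\subseteq A$ and $R\setminus B\subseteq [r]\setminus A$, so $P_B$ refines the bipartition.
\item Otherwise $R$ lies entirely in $A$ or entirely in $[r]\setminus A$. Then any $P_B$ refines the bipartition, since both $B$ and $R\setminus B$ are contained in $R$, which lies on one side. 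At least one such $P_B$ exists because $|R|\geq 2$.
\end{itemize}

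The only point needing care is the second case, where the bipartition separates only the singleton elements $\{1,\ldots,t-1\}$. Refinement asks only that each block lie on one side, not that the blocks match $A$ and its complement exactly, so this case is harmless. No other obstacle arises.
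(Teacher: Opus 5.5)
Your proof is correct and matches the paper's argument: fix an $(r-t+1)$-element set, keep the other $t-1$ elements as singleton blocks, use all $2^{r-t}-1$ unordered nontrivial bipartitions of the fixed set, and verify refinement by the same two cases (the bipartition splits the fixed set, or the fixed set lies on one side).
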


\begin{proof}
Fix $S\subseteq[r]$ of size $r-t+1$, and use the other $t-1$ elements as
singleton blocks in every partition. For each unordered nontrivial
bipartition $S=U\sqcup(S\setminus U)$, take these two blocks together with
the $t-1$ singleton blocks. This gives $2^{|S|-1}-1=2^{r-t}-1$
partitions.

Let $A\sqcup B=[r]$ be a nontrivial bipartition. If both $A$ and $B$ meet
$S$, the partition associated with $U=A\cap S$ refines $A\mid B$. If $S$
lies wholly on one side, any of the displayed partitions does so, since
every element outside $S$ is a singleton block.
\end{proof}

\begin{lemma}\label{lem:rainbow-step}
Let $1\leq t<h$, and suppose that the family
$(\calF_1,\ldots,\calF_m)$ of $m$ color classes is $t$-rainbow. There exists a collection of at most $(2^{r-t}-2)(h-t)$ color classes, such that after deleting them, 
the remaining family of color classes is either empty or
$(t+1)$-rainbow.
\end{lemma}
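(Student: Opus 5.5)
The plan is to delete color classes in at most $h-t$ rounds, removing at most $2^{r-t}-2$ classes per round. A witness is a labeled subfamily $T$ with $|T|\le h$, at most $t+1$ members from each color, and empty intersection. Since the family is $t$-rainbow, every witness contains exactly $t+1$ members of at least one color; call such a color \emph{offending} for $T$. A witness has size between $t+1$ and $h$. In each round I will look only at witnesses of minimum size $s$, and delete enough offending colors that no witness of size $s$ survives. Deletions never create witnesses, so the minimum witness size strictly increases after each round. Hence there are at most $h-t$ rounds, and the total bound follows once each round is shown to remove at most $2^{r-t}-2$ colors.

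Within a round I proceed greedily. Repeatedly pick a surviving witness $T$ of size $s$ and delete one of its offending colors. Because $T$ is a minimum-size witness, removing any single member of $T$ leaves a family with nonempty intersection. Assume for contradiction that this greedy process runs for $N=2^{r-t}-1$ steps. That produces witnesses $T_1,\dots,T_N$ of size $s$ and distinct offending colors $c_1,\dots,c_N$, where $T_j$ contains no member of the colors $c_1,\dots,c_{j-1}$ deleted before it.

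The contradiction should come from Radon's theorem together with the partition covering lemma (Lemma~\ref{lem:partition-cover}). The intended use of that lemma is to let the $N$ witnesses be indexed by its $2^{r-t}-1$ partitions $\pi$ of $[r]$ into $t+1$ blocks; the $t+1$ blocks of $\pi$ would be matched with the $t+1$ members $F^1_\pi,\dots,F^{t+1}_\pi$ of the offending color $c_\pi$ in $T_\pi$. I then want to choose $r$ points $p_1,\dots,p_r$. Point $p_i$ should lie in every set of every $T_\pi$, except the single member $F^{b}_\pi$ whose block index $b$ is the block of $\pi$ containing $i$. Such a point requires a common point of a large union of the $T_\pi$'s, with one member of each $c_\pi$ removed. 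That union has at most $t$ members of each offending color. For non-offending colors I would rely on minimality and the ordering of the $T_j$, so that the $t$-rainbow property supplies the point; I may need to adjust the construction to keep the per-color count at most $t$.

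Given the points, apply Radon's theorem to get a bipartition $A\sqcup B=[r]$ and a point $x\in\conv\{p_i:i\in A\}\cap\conv\{p_i:i\in B\}$. By Lemma~\ref{lem:partition-cover}, some partition $\pi$ refines $A\mid B$. Then each member $F^b_\pi$ is omitted only by points lying in a single block, hence on a single side of the Radon partition. So all points on the other side lie in $F^b_\pi$, and by convexity $x\in F^b_\pi$. The remaining members of $T_\pi$ contain all the points, so they contain $x$ as well. Thus $x\in\bigcap T_\pi$, contradicting that $T_\pi$ is a witness.

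I expect the main obstacle to be the choice of points. Guaranteeing that each required subfamily has at most $t$ members of every color, so that the $t$-rainbow property applies, is delicate. This is exactly why the witnesses have to be taken of minimum size and ordered by the greedy deletions. If it fails, I would first try choosing the points for one $\pi$ at a time, using only $T_\pi$, and running the Radon argument separately for each $\pi$.
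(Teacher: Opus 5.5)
\textbf{There is a genuine gap: the points $p_i$ are never justified, and this is the whole content of your per-round bound.} Each $p_i$ must lie in every member of $\bigcup_\pi \bigl(T_\pi\setminus\{F^{b}_\pi\}\bigr)$. To get such a point from the $t$-rainbow property, this union must contain at most $t$ members of every color. Your greedy witnesses give no such control, for three reasons:
\begin{itemize}
\item a second color with $t+1$ members in $T_\pi$ keeps all of them;
\item a non-offending color can occur in many different $T_\pi$ with different members;
\item $c_\pi$ itself may occur in earlier witnesses $T_{\pi'}$, since it was deleted only after they were chosen.
\end{itemize}
Minimality of $|T_\pi|$ only says that proper subfamilies of a \emph{single} witness intersect. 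It says nothing about unions across witnesses.

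Here is a concrete example with $t=1$ in box convexity on $[3]^{M+1}$. The only witnesses can be $T_j=\{C_j^1,C_j^2,D_j\}$, with distinct offending colors $c_j$ but all $D_j$ of one color $d$ and $D_1\cap D_2\cap D_3=\emptyset$. Then every $p_i$ would have to lie in $D_1\cap D_2\cap D_3=\emptyset$. So the existence of the $p_i$ cannot follow from $t$-rainbowness and minimality.

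Your fallback of treating one $\pi$ at a time also fails. Lemma~\ref{lem:partition-cover} only guarantees that \emph{some} partition in the whole collection refines the Radon bipartition, and a single minimal witness yields no contradiction.

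\textbf{The missing idea is color-disjointness, and it changes the accounting.} The paper proceeds as follows:
\begin{itemize}
\item It takes a maximal family of pairwise color-disjoint witnesses $T_1,\dots,T_s$.
\item It splits \emph{all} members of each $T_i$, not just those of one color, into $t+1$ columns, each containing at most one set of each color.
\item For each $a\in[r]$, it discards from every $T_i$ the column whose block contains $a$. Because colors are disjoint across witnesses, the kept union has at most $t$ members of each color, so the $t$-rainbow property gives $x_a$.
\item Your Radon and covering argument, applied to the column intersections $C_i^j$ instead of single sets, then shows $s\le 2^{r-t}-2$.
\item Finally it deletes every color met by the $T_i$. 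That is at most $h-t$ per witness, since one color contributes $t+1$ members and $|T_i|\le h$. Maximality shows that no witness survives.
\end{itemize}
So the factor $h-t$ counts colors per witness, not rounds by witness size. Your per-round bound for greedy, non-disjoint witnesses remains unproved.
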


\begin{proof}
Choose a family of witnesses $T_1,\ldots,T_s$ which is maximal subject to no color occurring
in two different witnesses. We claim that 
\begin{equation}\label{Eq:Witnesses}
s\leq2^{r-t}-2.
\end{equation}
Assume on the contrary that $s \geq 2^{r-t}-1$. 
Retain $2^{r-t}-1$ witnesses and biject them with the partitions from
Lemma~\ref{lem:partition-cover}. Split every $T_i$ into $t+1$ columns
$T_i^1,\ldots,T_i^{t+1}$ so that each column contains at most one set of
each color; such a split exists because no color contributes more than
$t+1$ sets. Put
\[
 C_i^j=\bigcap_{F\in T_i^j}F,
\]
where the intersection of an empty column is $X$. Each $C_i^j$ is convex,
and
$\bigcap_{j=1}^{t+1}C_i^j=\emptyset$.

Write the partition assigned to $T_i$ as
$P_i^1\sqcup\cdots\sqcup P_i^{t+1}=[r]$. For each $a\in[r]$, perform the following process. Discard
from every witness $T_i$ the unique column $j$ with $a\in P_i^j$ and keep the
other $t$ columns. The union of all kept columns contains at most $t$ sets
of any color, because a column contains at most one set of a fixed color
and different witnesses use disjoint colors. It has a
common point by the $t$-rainbow property of the original family; choose one and call it $x_a$.

Apply the Radon property to $x_1,\ldots,x_r$. There is a nontrivial
bipartition $A\sqcup B=[r]$ and a point
\[
 x\in\conv\{x_a:a\in A\}\cap\conv\{x_b:b\in B\}.
\]
By Lemma~\ref{lem:partition-cover}, one of the $(t+1)$-partitions of $[r]$ assigned to the witnesses, say the one assigned to $T_i$,
refines $A\mid B$. If $P_i^j\subseteq A$, then column $j$ was kept in the
construction of every $x_b$ with $b\in B$. Hence $x_b\in C_i^j$ for all
$b\in B$, and convexity gives $x\in C_i^j$. The same argument with $A$
and $B$ interchanged applies when $P_i^j\subseteq B$. Thus
$x\in\bigcap_jC_i^j$, a contradiction. This proves~\eqref{Eq:Witnesses}.

Delete every color class met by $T_1,\ldots,T_s$. Note that every witness meets at most $h-t$
color classes: the $t$-rainbow property of the family forces one color to contribute $t+1$
members, and each further color contributes at least one.
Hence, overall at most
$(2^{r-t}-2)(h-t)$ colors are deleted. If the surviving family were
nonempty and fails to be $(t+1)$-rainbow, there would exist a
witness using only undeleted color classes, contradicting maximality of $(T_1,\ldots,T_s)$.
\end{proof}

Now we are ready to present the proof of Theorem~\ref{thm:intro-colorful}.
                
\begin{proof}[Proof of Theorem~\ref{thm:intro-colorful}]
If $h=1$, every finite family of nonempty convex sets has a common point.
No member of a colorful Helly counterexample can be empty, so no color
class can have empty total intersection; hence $h_c=1$. Assume $h\geq2$.
Starting from a $1$-rainbow
family, apply Lemma~\ref{lem:rainbow-step} successively for
$t=1,\ldots,h-1$. This is legitimate since $h\leq r-1$. The family left
at the end is $h$-rainbow and must be empty, by the definition of the
Helly number. Consequently, a colorful Helly counterexample has at most
\[
                  \sum_{t=1}^{h-1}(2^{r-t}-2)(h-t)
\]
color classes. Therefore,
\begin{align*}
h_c
\leq1+\sum_{t=1}^{h-1}(2^{r-t}-2)(h-t)
=1+2^r\bigl(h-2+2^{1-h}\bigr)-h(h-1)
 <2^rh\leq2^r(r-1),
\end{align*}
where the equality uses the simplification
$\sum_{t=1}^{h-1}(h-t)2^{-t}=h-2+2^{1-h}$. For $h=2$, the exact
expression is $h_c\leq2^{r-1}-1$.
\end{proof}

\begin{remark}
    Our proof follows the same general philosophy as the proof of Holmsen and Lee~\cite{HolmsenLee}: suitable partitions are encoded by color classes or color-disjoint witnesses, and a Radon/Tverberg-type partition of the resulting intersection points is used to obtain a contradiction. The main difference is that Holmsen and Lee make one global step, encoding all $h$-partitions of an $r_h$-element set, whereas we iteratively upgrade a $t$-rainbow family into a $(t+1)$-rainbow family, and thus, at each step we need only $2^{r-t}-1$ partitions that refine all Radon bipartitions of $[r]$. Thus, in our proof, repeated applications of the Radon property replace a much more expensive $h$-Tverberg step in the proof of Holmsen and Lee.
\end{remark}

\section{New Upper Bound on the Tverberg Number}\label{sec:upper}

In this section we prove the following upper bound on the Tverberg number in abstract convexity spaces in terms of the Radon number, which implies Theorem \ref{thm:intro-radon}. 
\begin{theorem}\label{thm:main}
Let $(X,\calC)$ be an abstract convexity space with Helly number $h\geq 2$ and colorful Helly number $h_c$. 
Then for every $k\geq 2$,
\begin{equation}\label{eq:main-explicit}
 r_k(X,\calC) \leq 20 k\bigl(r_h(X,\calC)+h\log(4h^2 h_c)\bigr).
\end{equation}
\end{theorem}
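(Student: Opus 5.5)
Given a multiset $P$ of $N = 20k\bigl(r_h + h\log(4h^2h_c)\bigr)$ points, I will produce a Tverberg $k$-partition in the style of P\'alv\"olgyi, using the colorful Helly number in place of the fractional Helly number. The argument has three steps.

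\textbf{Step 1: a random partition with bad events.} Split $P$ uniformly at random into $k$ parts $Q_1,\dots,Q_k$, so each part has expected size about $N/k = 20\bigl(r_h + h\log(4h^2h_c)\bigr)$. By the definition of the Helly number, $\bigcap_j \conv(Q_j) = \emptyset$ forces some $h$ parts $Q_{j_1},\dots,Q_{j_h}$ whose hulls already miss a common point. So it suffices to prevent every $h$-subset of parts from failing.

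\textbf{Step 2: a deterministic core for $h$ parts.} By the definition of $r_h$, any $r_h$ points split into $h$ parts with intersecting hulls. The idea is to find inside $P$ many disjoint ``certificates'': groups of $r_h$ points, each carrying a Tverberg $h$-partition. For a fixed $h$-tuple of parts, one such group must be distributed among those parts compatibly with its Tverberg partition.

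The Sauer--Shelah / Karpovsky--Milman lemma for partitions into many parts will be used as follows. Consider the set system recording, for each $h$-tuple of parts, which assignments of points to the $h$ parts yield empty intersection. I expect colorful Helly to bound the ``dimension'' of this system. Take $h_c$ color classes, each being the hulls of the $h$ parts of some failed configuration. If every rainbow choice intersected, colorful Helly would give a color class with full intersection. Hence the number of combinatorially distinct failure patterns is controlled, and the failing assignments form a family of VC-type dimension about $\log(h^2h_c)$ per part.

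\textbf{Step 3: the probability estimate (the main obstacle).} This step must show that, for a random $k$-partition and a fixed $h$-tuple of parts, the probability of failure is at most $\binom{k}{h}^{-1}/2$, so that a union bound over tuples finishes the proof. The bound I need is roughly $\exp\bigl(-\Omega(h\log k + h\log(h^2h_c))\bigr)$.

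I will try a counting argument. Each part contains at least $20\bigl(r_h + h\log(4h^2h_c)\bigr)$ points with high probability, by a Chernoff bound. Conditioning on the union of the $h$ parts, the assignment inside that union is a uniform $h$-coloring. The Karpovsky--Milman bound then shows that the colorings avoiding every certificate make up only a $(4h^2h_c)^{-\Omega(h)}\cdot e^{-\Omega(r_h)}$ fraction.

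The delicate point is obtaining the $\log k$ dependence without inflating the number of points per part. My first attempt will be to handle the union bound over $\binom{k}{h}$ tuples by a two-round scheme: first split $P$ into $k$ blocks of size $N/k$, then argue deterministically within each block, using Step 2, that every $h$ blocks intersect. If that fails, I will instead absorb the $\log k$ factor into the constant $20$ by letting each part have size proportional to $r_h + h\log(4h^2h_c)$, which must itself dominate $h\log k$ only after an initial reduction via Jamison's bound for small $k$.

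Finally, substituting Theorem~\ref{thm:intro-colorful} ($h_c \le (r-1)2^r$), Levi's bound $h \le r-1$, and Jamison's bound $r_h \le r^{\lceil \log_2 h\rceil}$ into~\eqref{eq:main-explicit} gives Theorem~\ref{thm:intro-radon}.
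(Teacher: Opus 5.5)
\textbf{Gap.} The failure is in Step~3, and it is structural, not a matter of constants: with exactly $k$ parts and a union bound over all $h$-tuples, you cannot get a bound linear in $k$. You need every one of the $\binom{k}{h}$ $h$-tuples of parts to intersect, so each tuple must fail with probability about $\binom{k}{h}^{-1}$. The Karpovsky--Milman/Chernoff estimate you have in mind (this is the paper's Lemma~\ref{lem:coloring-pattern}) only gives $\Pr[\operatorname{Bin}(N,1/k)<r_h]=e^{-\Theta(N/k)}$. Matching the two forces $N/k=\Omega(r_h+h\log(k/h))$, which is just the non-linear bound of Theorem~\ref{thm:intro-reay-general}(1) with $t=h$.

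The obstruction is real, not an artifact of the analysis. By coupon collector, a uniformly random split of $N=Ck$ points into $k$ parts, with $C$ independent of $k$, has an empty part with probability tending to $1$ as $k\to\infty$. Your claim that each part has at least $20(\cdots)$ points with high probability holds for one part, not for all $k$ parts simultaneously. Neither rescue works. The quantity $r_h+h\log(4h^2h_c)$ does not depend on $k$, so it cannot absorb $h\log k$; Jamison's bound only disposes of boundedly many $k$. And there is no deterministic reason that every $h$ blocks of a fixed split intersect, which is exactly the statement you are trying to prove.

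\textbf{Missing idea.} You need to oversample and tolerate failures rather than exclude them all. The paper colors with $2k$ colors and asks only that each fixed $h$-tuple be bad with probability at most $\eta=(4h^2h_c)^{-h}$, a threshold independent of $k$; this is why $N/2k\ge 8(r_h+h\log(4h^2h_c))$ suffices. By linearity of expectation some coloring has at most $\eta\binom{2k}{h}$ bad $h$-tuples, and no union bound is taken.

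Colorful Helly then enters through a deterministic supersaturation lemma (Lemma~\ref{lem:near-helly}), not through a VC-type dimension bound. Your Step~2 inference that colorful Helly ``controls the number of failure patterns'' does not follow. The lemma runs as follows:
\begin{enumerate}
\item Take a maximal family $\mathcal M$ of pairwise disjoint bad $h$-tuples, and suppose no $k$ of the $2k$ hulls intersect.
\item Helly applied to the sets outside $\bigcup\mathcal M$ gives $s=|\mathcal M|>k/h$.
\item Colorful Helly applied to any $h_c$ members of $\mathcal M$, viewed as color classes, yields a rainbow bad $h$-tuple.
\item Counting how many $h_c$-subcollections can produce the same tuple gives $B\ge\binom{s}{h}/\binom{h_c}{h}$ bad tuples, contradicting $B\le\eta\binom{2k}{h}$. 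The case $s<h_c$ is handled directly.
\end{enumerate}
The $k$ intersecting classes are automatically nonempty. They are completed to a Tverberg partition by distributing the remaining $k$ classes among them, and the case $k<h$ is handled by $r_k\le r_h$.
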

\begin{proof}[Proof of Theorem \ref{thm:intro-radon}]
To see that Theorem~\ref{thm:main} implies the $r_k(X,\calC) \leq O(r^{\lceil \log_2 r \rceil})k$ upper bound asserted in Theorem~\ref{thm:intro-radon}, note that if the Radon number of $(X,\calC)$ is $r$ then by Levi's theorem, $h<r$; by Jamison's theorem, $r_h(X,\calC)\leq r^{\lceil\log_2 h\rceil}\leq r^{\lceil \log_2 r \rceil}$; and by Theorem~\ref{thm:intro-colorful}, $h_c<h2^r$. Since $h<r$, the latter implies $h\log(h^2 h_c)=O(r^2) \leq O(r^{\lceil \log_2 r \rceil})$, where the inequality holds as $r > h \geq 2$.  Substituting these bounds into~\eqref{eq:main-explicit} yields $r_k(X,\calC) \leq 20k(r^{\lceil \log_2 r \rceil}+O(r^2))=O(r^{\lceil \log_2 r \rceil})k$.
\end{proof}


\paragraph{Definitions and overview of the proof of Theorem~\ref{thm:main}.} Let $N=20 k\bigl(r_h(X,\calC)+h\log(4h^2 h_c)\bigr).$ Given a set $P \subset X$ of $N$ points, we color the points of $P$ in $2k$ colors, independently at random. We would like to show that with a positive probability, there exist $k$ color classes whose convex hulls have a non-empty intersection. This will imply the assertion of the theorem, as one can take a coloring that has this property and complete the $k$ color classes to a $k$-Tverberg partition by adding the remaining points to the $k$ color classes arbitrarily. 

In order to exploit Helly-type arguments, we consider $h$-tuples of color classes. We say that an $h$-tuple is \emph{bad} if the convex hulls of those color classes have an empty intersection.

The proof has two steps. First, we show, using a variant of the Sauer-Shelah lemma for $h$-partitions and Chernoff estimates, that for every fixed $h$-tuple of color classes, the probability that it is bad is very low. We conclude that there exists a coloring for which only a few of the $h$-tuples are bad.

The second step is deterministic. We prove, using the colorful Helly theorem, that if a family of $2k$ convex sets has sufficiently few non-intersecting $h$-subfamilies,
then at least $k$ of the sets have a common point. Taking the coloring found in the first step and applying this result to the convex hulls of the color classes
completes the proof.

The core idea of our proof is similar to that of P\'{a}lv\"{o}lgyi~\cite{Palvolgyi}; we discuss the relation between the proofs at the end of the section.

\paragraph{Step~1: Showing that there exists a coloring with very few bad $h$-tuples.}

In this step, we prove the following proposition.
\begin{proposition}\label{prop:random-bins}
Let $(X,\calC)$ have Helly number $h\geq 2$, and put $u=r_h(X,\calC)$. Let $P$ be an $N$-element multiset in $X$, and color each element of $P$ independently and uniformly in one of $2k$ colors. Denote the color classes by $P_1,\ldots,P_{2k}$. If $\eta\in(0,1)$ and $\frac{N}{2k}\geq 8\bigl(u+\log(1/\eta)\bigr)$,
then for every fixed $h$-tuple $I$ of color classes,
\[
 \Prob\left[\bigcap_{i\in I}\conv(P_i)=\emptyset\right]\leq\eta.
\]
Consequently, there exists a coloring for which at most $\eta\binom{2k}{h}$ of the $h$-tuples of color classes have empty intersection.
\end{proposition}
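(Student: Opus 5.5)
We condition on which points receive colors from $I$, reduce badness to a forbidden-pattern condition on $h$-colorings, and then count with the Karpovsky--Milman variant of the Sauer--Shelah lemma.

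Fix an $h$-tuple $I$ of color classes, and let $Q\subseteq P$ be the sub-multiset of points whose color lies in $I$. Conditioned on $Q$ with $|Q|=n$, the restriction of the coloring to $Q$ is a uniformly random function $c\colon Q\to I$, where $|I|=h$. Call $c$ \emph{bad} if $\bigcap_{i\in I}\conv(c^{-1}(i))=\emptyset$. Let $\calB\subseteq I^Q$ be the set of bad functions.

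The key observation uses $u=r_h(X,\calC)$. For any $u$-element sub-multiset $S\subseteq Q$ there is an $h$-partition $S=S_1\sqcup\cdots\sqcup S_h$ with $\bigcap_j\conv(S_j)\neq\emptyset$. Label the blocks by the elements of $I$ to obtain a pattern $\pi_S\colon S\to I$. Any $c\in I^Q$ with $c|_S=\pi_S$ satisfies $\conv(c^{-1}(i))\supseteq\conv(S_i)$, so $c$ is good. Hence no $c\in\calB$ restricts to $\pi_S$ on $S$. Thus $\calB$ is a family of functions $Q\to[h]$ that, on every $u$-subset, misses at least one pattern, i.e.\ it does not ``$h$-shatter'' any $u$-set.

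I will invoke the Karpovsky--Milman variant of Sauer--Shelah in the following form. If $\calB\subseteq[h]^n$ misses some pattern on every $u$-set of coordinates, then $|\calB|\leq\sum_{i<u}\binom ni(h-1)^{n-i}$. Equivalently, $|\calB|$ is at most the number of words with fewer than $u$ coordinates equal to a fixed symbol. Dividing by $h^n$ gives
\[
\Prob[c\text{ bad}\mid Q]\leq\Prob\bigl[\mathrm{Bin}(n,1/h)<u\bigr].
\]
Pinning down this exact form, and checking that the multiset setting causes no trouble (coordinates are labeled points), is the step I expect to need the most care. If the lemma is only available in the form ``$|\calB|$ is at most the number of words in which some symbol appears fewer than $u$ times'', I will lose a factor $h$ and absorb it via $\eta\mapsto\eta/h$.

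To remove the conditioning, note that $n=|Q|\sim\mathrm{Bin}(N,h/2k)$. A $\mathrm{Bin}(n,1/h)$ variable with $n$ drawn this way is distributed as $\mathrm{Bin}(N,1/2k)$: it counts the points receiving one fixed color of $I$. Therefore
\[
\Prob\Bigl[\bigcap_{i\in I}\conv(P_i)=\emptyset\Bigr]\leq\Prob\bigl[\mathrm{Bin}(N,1/2k)<u\bigr].
\]
The mean here is $\mu=N/2k\geq8(u+\log(1/\eta))$, so $u\leq\mu/8$. The multiplicative Chernoff lower tail $\Prob[X<(1-\delta)\mu]\leq e^{-\delta^2\mu/2}$ with $\delta=7/8$ gives a bound of $\exp(-\tfrac{49}{128}\mu)\leq\exp(-3\log(1/\eta))\leq\eta$.

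The ``consequently'' part is linearity of expectation. The expected number of bad $h$-tuples is at most $\eta\binom{2k}{h}$, so some coloring attains at most this many.
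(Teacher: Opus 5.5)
Your proposal is correct and follows the paper's proof essentially step for step: the forbidden-pattern reduction via a fixed $r_h$-Tverberg pattern on each $u$-subset, combined with the Karpovsky--Milman bound, is exactly the paper's Lemma~\ref{lem:coloring-pattern}, and it is followed by the same binomial-thinning identity, a Chernoff lower-tail estimate, and linearity of expectation. The only cosmetic difference is your use of the tail bound with $\delta=7/8$ where the paper uses $\Prob[Z\le \mu/2]\le e^{-\mu/8}$; both give a bound of at most $\eta$ under the hypothesis $\mu\ge 8(u+\log(1/\eta))$.
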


We prove Proposition~\ref{prop:random-bins} by a Sauer-Shelah type argument, which uses the following variant of the Sauer-Shelah lemma for $q$-partitions, due to Karpovsky and Milman \cite{KarpovskyMilman}. 
\begin{theorem}[Sauer-Shelah for $q$-partitions~\cite{KarpovskyMilman}]\label{thm:qary-sauer}
Let $Q$ be an $m$-element set, let $q\geq 2$, and let $\calA\subseteq [q]^Q$ be a family of $q$-colorings of $Q$. Suppose that no $u$-element subset $A\subseteq Q$ is shattered by $\calA$, meaning that for every $A\in\binom{Q}{u}$, the set of restrictions $\{\chi|_A:\chi\in\calA\}$ is a proper subset of $[q]^A$. Then
\[
 |\calA|\leq \sum_{i=0}^{u-1}\binom{m}{i}(q-1)^{m-i}.
\]
\end{theorem}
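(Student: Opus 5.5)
The plan is to argue by induction on $m=|Q|$, using a Pascal-type recursion for the right-hand side. Write
\[
 f(m,u)=\sum_{i=0}^{u-1}\binom{m}{i}(q-1)^{m-i},
\]
with the convention $f(m,0)=0$ (empty sum). From $\binom{m}{i}=\binom{m-1}{i}+\binom{m-1}{i-1}$ we get, term by term,
\[
 f(m,u)=(q-1)f(m-1,u)+f(m-1,u-1).
\]
The induction step will therefore aim to prove exactly
\[
 |\calA|\leq (q-1)f(m-1,u)+f(m-1,u-1).
\]

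\textbf{Base cases.} If $u=0$, the empty set must not be shattered. The empty set is shattered by every nonempty family, so $\calA=\emptyset$ and the bound $0$ holds. If $m<u$, the hypothesis is vacuous. In that case $f(m,u)=\sum_{i=0}^{m}\binom{m}{i}(q-1)^{m-i}=q^m$ by the binomial theorem, and $|\calA|\leq q^m$ trivially. (When $m=0$ and $u\geq 1$ this also gives the bound $1$.)

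\textbf{Induction step.} Assume $m\geq u\geq 1$, fix an element $x\in Q$, and put $Q'=Q\setminus\{x\}$. For each $\psi\in[q]^{Q'}$, let $s(\psi)$ be the number of colors $c$ for which the extension of $\psi$ by $\chi(x)=c$ lies in $\calA$. Then $|\calA|=\sum_\psi s(\psi)$. Define
\[
 \calA_1=\{\psi: s(\psi)\geq1\},\qquad \calA_q=\{\psi:s(\psi)=q\}.
\]
Every $s(\psi)$ lies in $\{0,1,\dots,q\}$. If $\psi\in\calA_1\setminus\calA_q$ then $s(\psi)\leq q-1$; if $\psi\in\calA_q$ then $s(\psi)=q=(q-1)+1$. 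Summing over $\psi$ gives
\[
 |\calA|\leq (q-1)|\calA_1|+|\calA_q|.
\]

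\textbf{Bounding the two families.} The family $\calA_1$ is the set of restrictions of $\calA$ to $Q'$. Any $u$-subset of $Q'$ shattered by $\calA_1$ would be shattered by $\calA$. Hence the induction hypothesis applies with the same $u$ and gives $|\calA_1|\leq f(m-1,u)$.

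For $\calA_q$, suppose some $B\subseteq Q'$ with $|B|=u-1$ were shattered by $\calA_q$. Take any target coloring of $B\cup\{x\}$. Pick $\psi\in\calA_q$ realizing its restriction to $B$. Since $\psi\in\calA_q$, every value at $x$ extends $\psi$ to a member of $\calA$, so the target coloring is realized by $\calA$. Thus $B\cup\{x\}$, a $u$-set, is shattered by $\calA$, a contradiction. Therefore no $(u-1)$-subset of $Q'$ is shattered by $\calA_q$, and induction gives $|\calA_q|\leq f(m-1,u-1)$. When $u=1$ this reads $\calA_q=\emptyset$: if $\calA_q$ were nonempty, the set $\{x\}$ would be shattered.

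\textbf{Conclusion and expected difficulty.} Combining the two bounds with the recursion gives $|\calA|\leq f(m,u)$. The only real point needing care is choosing the split $s(\psi)\leq (q-1)\cdot[\psi\in\calA_1]+[\psi\in\calA_q]$. A naive split using all the level sets $\{\psi: s(\psi)\geq j\}$ does not match the recursion, whereas this split matches it exactly. The rest is checking the boundary conventions ($u=0$ and $m<u$), which are handled by the base cases above.
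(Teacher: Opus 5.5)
Your proof is correct. The paper does not prove Theorem~\ref{thm:qary-sauer}: it quotes the result from Karpovsky and Milman and uses it as a black box in Lemma~\ref{lem:coloring-pattern}. So your argument is a self-contained proof rather than an alternative to one in the text. It is the natural $q$-ary version of the standard coordinate-deletion induction for the Sauer--Shelah lemma.

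The two steps that carry the weight are handled correctly:
\begin{itemize}
\item the pointwise bound $s(\psi)\le (q-1)[\psi\in\calA_1]+[\psi\in\calA_q]$;
\item the observation that only the level $s(\psi)=q$ lets a $(u-1)$-set $B\subseteq Q'$ shattered by $\calA_q$ lift to the shattered $u$-set $B\cup\{x\}$. Intermediate levels $2\le s(\psi)\le q-1$ give no such lift, which is why they are absorbed into the factor $q-1$.
\end{itemize}
The recursion $f(m,u)=(q-1)f(m-1,u)+f(m-1,u-1)$ is right. So are the base cases $u=0$ and $m<u$, where $f(m,u)=q^m$.

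As a check that the two-level split loses nothing, the bound is attained. Take the family of colorings that use color $q$ on at most $u-1$ elements. It has exactly $\sum_{i<u}\binom{m}{i}(q-1)^{m-i}$ members and shatters no $u$-set, since it misses the constant pattern $q$.
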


\begin{lemma}\label{lem:coloring-pattern}
Let $q\geq 2$ and let $u=r_q(X,\calC)$. Let $Q$ be an $m$-element multiset in $X$. Color the elements of $Q$ independently and uniformly with colors in $[q]$, and let $Q_i$ be the submultiset of elements of color $i$. Then
\[
 \Prob\left[\bigcap_{i=1}^q \conv(Q_i)=\emptyset\right]
 \leq \Prob[\operatorname{Bin}(m,1/q)<u].
\]
\end{lemma}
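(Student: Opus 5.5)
We will prove Lemma~\ref{lem:coloring-pattern} by a Sauer--Shelah counting argument based on Theorem~\ref{thm:qary-sauer}.

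\emph{Setup.} Let $Q=\{y_1,\ldots,y_m\}$ be a labeled multiset, and identify colorings with elements of $[q]^Q$. Each coloring has probability $q^{-m}$. Let $\calA\subseteq[q]^Q$ be the family of \emph{bad} colorings, i.e., those $\chi$ with $\bigcap_{i=1}^q\conv(\chi^{-1}(i))=\emptyset$.

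\emph{Key observation.} $\calA$ shatters no $u$-element subset $A\subseteq Q$, where $u=r_q$. Indeed, since $|A|=u=r_q(X,\calC)$, the definition of the Tverberg number gives a partition $A=A_1\sqcup\cdots\sqcup A_q$ with $\bigcap_i\conv(A_i)\neq\emptyset$. (Here we use the multiset convention: repeated labeled copies are allowed and do not affect hulls.) Let $\psi\in[q]^A$ assign color $i$ to $A_i$. Any $\chi$ with $\chi|_A=\psi$ has $\chi^{-1}(i)\supseteq A_i$. By monotonicity of hulls, $\bigcap_i\conv(\chi^{-1}(i))\supseteq\bigcap_i\conv(A_i)\neq\emptyset$, so $\chi\notin\calA$. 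Hence the pattern $\psi$ is missing from $\{\chi|_A:\chi\in\calA\}$, and $A$ is not shattered.

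\emph{Counting.} Theorem~\ref{thm:qary-sauer} now gives
\[
|\calA|\le\sum_{i=0}^{u-1}\binom{m}{i}(q-1)^{m-i}.
\]
Dividing by $q^m$,
\[
\Prob[\text{bad}]\le\sum_{i=0}^{u-1}\binom mi\Bigl(\tfrac1q\Bigr)^i\Bigl(1-\tfrac1q\Bigr)^{m-i}=\Prob[\operatorname{Bin}(m,1/q)<u].
\]
This is exactly the claimed bound.

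\emph{Edge cases.} If $m<u$, the right-hand side equals $1$ and the statement is trivial. The Karpovsky--Milman bound is also consistent in that case, since its sum then equals $q^m$. We also note that some classes $Q_i$ may be empty; then $\conv(\emptyset)$ is the empty set, since $\emptyset\in\calC$, and the coloring is bad. This does not affect the argument, because the Tverberg partition has nonempty parts.

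\emph{Main obstacle.} The only subtle step is verifying that non-shattering follows from the Tverberg property. This rests on the monotonicity argument above: one good pattern on $A$ forces every extension of it to be good. Everything else is a direct application of the cited theorem.
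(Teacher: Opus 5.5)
Your proposal is correct and follows essentially the same argument as the paper: every $u$-element submultiset carries a fixed $q$-Tverberg pattern that no bad coloring can extend, so the bad colorings shatter no $u$-set. Theorem~\ref{thm:qary-sauer} then bounds their number, and dividing by $q^m$ gives exactly $\Prob[\operatorname{Bin}(m,1/q)<u]$.
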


\begin{proof}
For every $u$-element submultiset $A\subseteq Q$, choose once and for all a labeled $q$-Tverberg partition $A=A_1\sqcup\cdots\sqcup A_q$. This choice is possible by the definition of $u=r_q(X,\calC)$. Equivalently, choose a color pattern $\varphi_A:A\to[q]$ such that $A_i=\varphi_A^{-1}(i)$ and $\bigcap_{i=1}^q\conv(A_i)\neq\emptyset$.

Call a coloring $\chi:Q\to[q]$ good if $\chi|_A=\varphi_A$ for at least one $u$-element submultiset $A\subseteq Q$. If $\chi$ is good, then for the corresponding $A$ we have $A_i\subseteq Q_i$ for all $i$, and therefore $\conv(A_i)\subseteq\conv(Q_i)$ for all $i$. Thus the common point of the sets $\conv(A_i)$ also lies in every $\conv(Q_i)$.

Let $\calB$ be the family of bad colorings. For every $u$-element $A\subseteq Q$, the restriction family $\{\chi|_A:\chi\in\calB\}$ misses the specific pattern $\varphi_A$. Hence, no $u$-element subset of $Q$ is shattered by $\calB$. Theorem~\ref{thm:qary-sauer} gives $|\calB|/q^m\leq \Prob[\operatorname{Bin}(m,1/q)<u]$, which proves the claim.
\end{proof}

We shall need a simple binomial tail estimate.
\begin{lemma}\label{lem:tail}
Let $Z\sim\operatorname{Bin}(n,p)$. If $a\geq 1$, $\eta\in(0,1)$, and $np\geq 8(a+\log(1/\eta))$, then 
\[
\Prob[Z<a]\leq\eta.
\]
\end{lemma}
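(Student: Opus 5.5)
The plan is to use the multiplicative Chernoff lower-tail bound. Write $\mu=np$. The standard estimate for $\delta\in(0,1)$ is
\[
\Prob[Z\leq(1-\delta)\mu]\leq\exp(-\delta^2\mu/2).
\]
I will take $\delta=1/2$, which gives $\Prob[Z\leq\mu/2]\leq\exp(-\mu/8)$. The proof then reduces to two checks.

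\textbf{First check: $a\leq\mu/2$.} This makes the event $\{Z<a\}$ contained in $\{Z\leq\mu/2\}$. The hypothesis $\mu\geq 8(a+\log(1/\eta))$ gives $\mu\geq 8a$, since $\log(1/\eta)>0$ for $\eta\in(0,1)$. Hence $a\leq\mu/8\leq\mu/2$, with plenty of room to spare.

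\textbf{Second check: $\exp(-\mu/8)\leq\eta$.} This is equivalent to $\mu\geq 8\log(1/\eta)$. It follows from the hypothesis because $a\geq 1>0$.

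Combining the two checks gives $\Prob[Z<a]\leq\Prob[Z\leq\mu/2]\leq\exp(-\mu/8)\leq\eta$.

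There is no real obstacle here; the only care needed is to cite a correct form of the Chernoff lower-tail bound. If I prefer a self-contained argument, I will derive the bound by the exponential-moment method. For $\lambda>0$,
\[
\Prob[Z\leq\mu/2]\leq e^{\lambda\mu/2}\,\E[e^{-\lambda Z}]\leq\exp\bigl(\lambda\mu/2+\mu(e^{-\lambda}-1)\bigr),
\]
using $1-p+pe^{-\lambda}\leq\exp(p(e^{-\lambda}-1))$. Choosing $\lambda=\log 2$ makes the exponent $\mu(\tfrac12\log2-\tfrac12)\approx-0.153\mu$, which is at most $-\mu/8$. The hypotheses are generous enough that even a weaker Chernoff constant would suffice.
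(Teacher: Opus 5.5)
Your proof is correct and follows the paper's argument exactly: use the hypothesis to get $a\leq np/8$, so $\{Z<a\}\subseteq\{Z\leq np/2\}$, then apply the Chernoff lower-tail bound $e^{-np/8}\leq\eta$. Your optional exponential-moment derivation with $\lambda=\log 2$ is also valid, giving the exponent $\bigl(\tfrac12\log 2-\tfrac12\bigr)np\leq -np/8$, so it is a correct self-contained substitute for citing Chernoff.
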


\begin{proof}
The assumption gives $a\leq (np)/8$, so the event $Z<a$ is contained in the event $Z\leq (np)/2$. Chernoff's inequality gives $\Prob[Z\leq (np)/2]\leq e^{-(np)/8}\leq\eta$.
\end{proof}

Now we are ready to prove Proposition~\ref{prop:random-bins}.

\begin{proof}[Proof of Proposition~\ref{prop:random-bins}]
Fix $I\in\binom{[2k]}{h}$. Let $Q$ be the submultiset of elements of $P$ colored in one of the colors in $I$. Conditional on $|Q|=m$, the colors of the elements of $Q$ among the $h$ colors in $I$ are independent and uniform in $[h]$. Applying Lemma \ref{lem:coloring-pattern} with $q=h$ gives
\[
 \Prob\left[\bigcap_{i\in I}\conv(P_i)=\emptyset\,\middle|\,|Q|=m\right]
 \leq \Prob[\operatorname{Bin}(m,1/h)<u].
\]
Now $|Q|\sim\operatorname{Bin}(N,h/2k)$. Averaging this bound over $m$ and using binomial thinning, the right-hand side becomes $\Prob[\operatorname{Bin}(N,1/2k)<u]$. The mean of this binomial random variable is $N/2k$, so Lemma \ref{lem:tail} and the assumption on $N/2k$ imply the claimed bound.

By linearity of expectation, the expected number of bad $h$-tuples of color classes is at most $\eta\binom{2k}{h}$. Hence, there exists a coloring with at most that many bad $h$-tuples of color classes.
\end{proof}

\paragraph{Step~2: Showing that if only a few $h$-tuples are bad then the convex hulls of some $k$ color classes intersect.}
The following lemma uses the matching argument of
Holmsen~\cite[Lemma 3.1]{Holmsen20}.
We prove it in the more general setting of families of $2k$ convex sets in a general convexity space. We will apply it to the convex hulls of the color classes in the coloring constructed in Step~1. We include the proof, with explicit constants, for completeness.
\begin{lemma}\label{lem:near-helly}
Let $(X,\calC)$ have Helly number $h\geq 2$ and colorful Helly number $h_c$. Let $\calF=(F_1,\ldots,F_{2k})$ be a finite labeled family of convex sets, with repetitions allowed. Assume $k\geq h$. Call an $h$-subfamily of $\calF$ bad if it has empty intersection, and let $B$ be the number of bad $h$-subfamilies. If
\begin{equation}\label{eq:near-helly-threshold}
 B \leq (4h^2 h_c)^{-h}\binom{2k}{h},
\end{equation}
then some subfamily of $\calF$ of size at least $k$ has nonempty intersection.
\end{lemma}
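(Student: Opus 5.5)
We argue by contradiction, following Holmsen's matching argument. Assume no $k$ members of $\calF$ have a common point, and aim to show that the number of bad $h$-subfamilies must then exceed $(4h^2h_c)^{-h}\binom{2k}{h}$. The engine is the colorful Helly property. Take any $h_c$ pairwise disjoint \emph{groups} of members of $\calF$, each group of size less than $k$ and with empty total intersection. Then some rainbow selection (one member from each group) has empty intersection. By the Helly number $h$, this selection contains a bad $h$-subfamily, which takes its $h$ members from $h$ distinct groups. So the plan is to exhibit many such disjoint configurations of $h_c$ groups and count the bad $h$-tuples they produce, controlling overcounting.

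\textbf{Step 1: a large supply of small non-intersecting subfamilies.} Since no $k$ members intersect, every $k$-subfamily has empty intersection, and by Helly it contains a bad $h$-subfamily. I would use this greedily to obtain small witnesses. Choose a maximal collection of pairwise disjoint bad $h$-subfamilies $T_1,\dots,T_s$. The union of the $T_j$ meets every $k$-subset in some member, since otherwise the complement would contain a bad $h$-set disjoint from them. Hence the leftover $2k-hs$ members number fewer than $k$. This gives $s\geq k/h$ pairwise disjoint bad $h$-sets, so already a matching of size linear in $k$. This alone gives only $\Omega(k/h)$ bad sets, far fewer than $\binom{2k}{h}$, so we need amplification.

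\textbf{Step 2: amplification by random sampling.} Pick a uniformly random subfamily $\calF'\subseteq\calF$ of size $m=c\,h\,h_c$, for a suitable small constant multiple. Condition on the event that $\calF'$ contains no bad $h$-subfamily. Then $\calF'$ itself has nonempty intersection by Helly, so each member of $\calF'$ contains a common point $p$. We would like to argue, via colorful Helly, that the event ``$\calF'$ has no bad $h$-set'' forces too much intersection. Form $h_c$ color classes as follows. Split the $2k$ members into those containing $p$ (fewer than $k$ of them, by assumption) and the rest. Then build $h_c$ disjoint classes, each consisting of one member of $\calF'$ together with some bad-set structure from Step 1 guaranteeing an empty class intersection. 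Colorful Helly then yields an empty rainbow $h_c$-tuple, hence a bad $h$-set inside a specified small random structure. Quantitatively, I would show that for a random ordered $h$-tuple drawn in a structured way, such as one element from each of $h$ random disjoint $T_j$'s plus $\calF'$, the probability of being bad is at least roughly $(4h^2h_c)^{-h}$. This probability estimate transfers to a uniformly random $h$-subset up to the factor lost between the structured and uniform distributions, which I expect to be absorbed in the $h^{2h}$ part of the constant. The assumption $k\geq h$ ensures these $h$ disjoint witnesses exist and that $\binom{2k}{h}$ comparisons are valid.

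\textbf{Main obstacle.} The delicate step is the second one: designing the $h_c$ color classes so that each class has empty intersection and fewer than $k$ members, while every rainbow bad $h$-set comes from a distribution that is comparable to a uniform $h$-subset. My first attempt would be the cleaner double-counting version. Count pairs (bad $h$-set $S$, $h_c\cdot h$-subset $W\supseteq S$). Show that every $W$ of size $4h^2h_c$, say, contains at least one bad $h$-set. This would hold by applying colorful Helly to $h_c$ classes carved out of $W$ together with the complementary structure from Step 1. Dividing by $\binom{2k-h}{|W|-h}$ then gives $B\geq\binom{2k}{|W|}/\binom{2k-h}{|W|-h}\geq (|W|/2k)^{-h}$-type bounds, that is $B\gtrsim (4h^2h_c)^{-h}\binom{2k}{h}$, contradicting \eqref{eq:near-helly-threshold}. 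The case analysis for ``every $W$ contains a bad set'' is where I expect the real work. If $W$ has no bad $h$-set, it intersects at a point $p$. Then the members avoiding $p$ must number more than $k$, and these are used to build the empty color classes.
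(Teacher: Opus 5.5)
Your Step 1 is exactly the paper's first step, and your opening paragraph names the right engine. However, you never connect the two, and the amplification you propose instead does not work. Your double count rests on the claim that every $W\subseteq\calF$ of size $4h^2h_c$ contains a bad $h$-subfamily, and this is false in general. Take pairwise disjoint nonempty convex sets $A,B,C$, and let $\calF$ consist of roughly $2k/3$ copies of each, with every multiplicity less than $k$. Then no $k$ members share a point. Yet once $2k/3\geq 4h^2h_c$, any $W$ made only of copies of $A$ has nonempty intersection and so contains no bad subfamily. So the bound $B\geq\binom{2k}{|W|}/\binom{2k-h}{|W|-h}$ is unavailable. Your Step 2 (a random $\calF'$, a common point $p$, classes ``carved out'' with some bad-set structure) is not specified precisely enough to be checked, and it is not needed.

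The missing idea is that the matching $T_1,\ldots,T_s$ from Step 1 already supplies the groups for colorful Helly. The $T_j$ are pairwise disjoint, and each has $h<k$ members and empty intersection.

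If $s\geq h_c$, apply colorful Helly to every $h_c$-subcollection of the matching. Each subcollection has an empty rainbow selection, and by Helly (padding with further chosen sets, which is possible since $h_c\geq h$) this yields a bad $h$-subfamily whose members lie in $h$ distinct $T_j$'s. Such a bad set determines those $h$ matching members, so it arises from at most $\binom{s-h}{h_c-h}$ subcollections. Hence
$$B\geq\binom{s}{h_c}\Big/\binom{s-h}{h_c-h}=\binom{s}{h}\Big/\binom{h_c}{h}\geq \bigl(s/(hh_c)\bigr)^h.$$
With $s>k/h$ and $\binom{2k}{h}\leq(2k)^h$, this gives $B/\binom{2k}{h}>(2h^2h_c)^{-h}$, contradicting the threshold.

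You also overlooked the case $s<h_c$, where colorful Helly cannot be applied to the matching at all. There $k<hh_c$, so $\binom{2k}{h}\le (2k)^h$ is small. The trivial bound $B\geq s>k/h$ then already gives $B/\binom{2k}{h}>1/\bigl(2h(2hh_c)^{h-1}\bigr)\geq(4h^2h_c)^{-h}$.
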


\begin{proof}
Let $\mathcal M$ be a maximal collection of pairwise disjoint bad $h$-subfamilies, and write $s=|\mathcal M|$. Let $\calR$ be the family of members of $\calF$ not contained in $\bigcup\mathcal M$. By maximality, $\mathcal R$ contains no bad $h$-subfamily.
If $|\mathcal R|\ge k$, then $|\mathcal R|\ge h$, and Helly's
theorem implies that $\mathcal R$ has nonempty intersection,
contrary to our assumption. Therefore $|\mathcal R|<k$,
and consequently $sh>k$.

Suppose, for contradiction, that no subfamily of $\calF$ of size at least $k$ has nonempty intersection. Then $|\calR|<k$, and hence $sh>k$. Thus $s>k/h$.

We now show that this forces more bad $h$-subfamilies than allowed by \eqref{eq:near-helly-threshold}. First suppose $s<h_c$. Then $k<h h_c$, since $s>k/h$. Already the $s$ members of $\mathcal M$ themselves give $B\geq s>k/h$, and hence, using $\binom{2k}{h}\leq (2k)^h$,
\[
 \frac{B}{\binom{2k}{h}} > \frac{1}{2h(2k)^{h-1}} > \frac{1}{2h(2hh_c)^{h-1}} \geq (4h^2 h_c)^{-h}.
\]
This contradicts \eqref{eq:near-helly-threshold}.

It remains to consider $s\geq h_c$. Pick any $h_c$ distinct members $M_1,\ldots,M_{h_c}$ of the matching $\mathcal M$. Regard each $M_i$ as a color class consisting of $h$ convex sets. Since each $M_i$ is bad, no color class has nonempty total intersection. By the contrapositive of colorful Helly, there is a colorful choice $G_i\in M_i$ such that $\bigcap_{i=1}^{h_c} G_i=\emptyset$. By the Helly property, at most $h$ of these chosen sets already have empty intersection. Adding further sets $G_i$ if necessary, which preserves the empty intersection, we obtain a bad $h$-subfamily whose $h$ sets come from $h$ distinct members of $\mathcal M$. 
(Adding these sets is possible, since $h_c \geq h$, which is seen by taking all color classes equal in the definition of the colorful Helly number).
Thus every $h_c$-subcollection of $\mathcal M$ produces at least one bad $h$-subfamily supported on $h$ distinct members of $\mathcal M$.

A fixed bad $h$-subfamily supported on $h$ distinct members of $\mathcal M$ can be produced from at most $\binom{s-h}{h_c-h}$ choices of an $h_c$-subcollection of $\mathcal M$, because the remaining $h_c-h$ matching members are arbitrary. Therefore
\[
 B \geq \frac{\binom{s}{h_c}}{\binom{s-h}{h_c-h}} = \frac{\binom{s}{h}}{\binom{h_c}{h}}.
\]
Using $\binom{h_c}{h}\leq h_c^h$, $\binom{2k}{h}\leq (2k)^h$, and $\binom{s}{h}\geq(s/h)^h$, we get
\[
 \frac{B}{\binom{2k}{h}} \geq \left(\frac{s}{hh_c (2k)}\right)^h > (2h^2 h_c)^{-h}.
\]
This again contradicts \eqref{eq:near-helly-threshold}. The contradiction proves that an intersecting subfamily of size at least $k$ exists.
\end{proof}

Now we are ready to prove Theorem~\ref{thm:main}.

\begin{proof}[Proof of Theorem \ref{thm:main}]
Set $\eta=(4h^2 h_c)^{-h}$, $u=r_h(X,\calC)$, and $A=u+\log(1/\eta)=u+h\log(4h^2h_c)$.

First suppose $k<h$. We clearly have $r_k\le r_h=u$, since one may merge parts of an $h$-Tverberg partition. Hence, inequality~\eqref{eq:main-explicit} is immediate. Thus we may assume $k\geq h$.

Let $P$ be any multiset in $X$ with $|P|=N\geq 20kA$. Then $N/2k\geq 10A$, so the hypothesis of Proposition \ref{prop:random-bins} holds. Therefore, there exists a $(2k)$-coloring of the  elements of $P$ into color classes $P_1,\ldots,P_{2k}$ such that at most $\eta\binom{2k}{h}$ of the $h$-tuples among the convex sets $\conv(P_1),\ldots,\conv(P_{2k})$ have empty intersection.

Since $k\geq h$, we have $2k\geq2h$. Lemma \ref{lem:near-helly}, applied to the labeled family of the convex hulls of the color classes, gives a subfamily of at least $k$ convex hulls with nonempty intersection. Choose $k$ of the corresponding color classes, say $P_{i_1},\ldots,P_{i_k}$, and let $x$ be a common point of their convex hulls. 

These $k$ color classes can be completed into a $k$-Tverberg partition by distributing all elements lying in the remaining $k$ color classes arbitrarily among the chosen $k$ color classes. The resulting parts $T_1,\ldots,T_k$ are nonempty and satisfy $P_{i_j}\subseteq T_j$ for each $j$, and hence $x\in\conv(P_{i_j})\subseteq\conv(T_j)$. Thus, $\bigcap_{j=1}^k\conv(T_j)\neq\emptyset$. This is a $k$-Tverberg partition of $P$.
Since every multiset $P$ of size at least $20kA$ has such a partition,~\eqref{eq:main-explicit} follows.
\end{proof}

\begin{remark}
Our proof shares with the upper bound proof of  P\'alv\"olgyi~\cite{Palvolgyi} the basic idea that a
non-Tverberg partition must avoid suitable Tverberg patterns on smaller
subsets. P\'alv\"olgyi amplifies the small probability of obtaining such
a pattern by using many disjoint blocks independently, and then applies
the fractional Helly theorem. In contrast, we use the variant of the Sauer-Shelah lemma for $q$-partitions to control the restrictions of a random coloring to
all $r_h$-subsets simultaneously, and then we use Lemma~\ref{lem:near-helly}
directly to pass from few bad $h$-tuples to many convex hulls with a
common point. This allows us to exploit substantially more of the local
Tverberg information and yields the improved quantitative bound.
\end{remark}

\section{A Lower Bound on the Tverberg Number}\label{sec:lower}

In this section we prove Theorem \ref{thm:intro-lower} which provides a lower bound on the Tverberg number and the $t$-wise Tverberg number in terms of the Radon number in general convexity spaces. Our construction is a variant of the box convexity on $[q]^n$, considered by P\'alv\"olgyi~\cite{Palvolgyi}. The general idea motivating the construction is to look for a space where in order for a partition of a set of randomly selected points to be Tverberg, the partition will need to satisfy a number of independent conditions, which will be exponentially unlikely.

Fix an integer $n\geq2$, set $q=\lceil\log_2 n\rceil+2$, and let $(X_n,\mathcal{C}_n)$ be the space in which $X_n=[q]^n$ and $\mathcal{C}_n$ consists of all the boxes $A_1\times A_2\times\cdots\times A_n$ with $A_j\subseteq[q]$, see Figure~\ref{fig:box-space-construction}. Boxes are closed under arbitrary intersections and include $\emptyset$ and $X_n$, so this is a finite convexity space. For each $P \subset [q]^n$, we have $\conv(P)=\pi_1(P)\times\cdots\times\pi_n(P)$,
where $\pi_j$ denotes the projection to the $j$th coordinate.

We prove the following, which includes the assertion of Theorem~\ref{thm:intro-lower} as the case $\alpha=1/2$.
\begin{theorem}\label{thm:lower}
Let $0<\alpha<1$ and write $r=r(X_n,\mathcal{C}_n)$. For all sufficiently large $n$ (depending on $\alpha$), the space $(X_n,\mathcal{C}_n)$ is $S_4$ separable, its Radon number satisfies $r=\Theta(\log n)$, and the following holds.
\begin{enumerate}
\item We have
\[
 r_k(X_n,\mathcal{C}_n)\ \geq\ \tfrac{\alpha}{3}\,rk\log k
 \qquad\text{for every $k$ with } (8\log r)^{1/(1-\alpha)}\leq k\leq n.
\]
Moreover, $r_n(X_n,\mathcal{C}_n)=\Theta(nr^2)$. 

\item For every $2\leq t\leq n$,
\[
 r_{n,t}(X_n,\mathcal{C}_n)=\Theta\bigl(rn\min\{r,t\}\bigr).
\]
In particular, for every constant $t$, we have $r_{n,t}(X_n,\mathcal{C}_n)=\Theta(rn)$.
\end{enumerate}
\end{theorem}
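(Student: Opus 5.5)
The plan rests on the fact that $\conv(P)=\pi_1(P)\times\cdots\times\pi_n(P)$. It gives a coordinatewise criterion: parts $P_1,\dots,P_k$ have intersecting convex hulls iff for every coordinate $j$ the sets $\pi_j(P_1),\dots,\pi_j(P_k)\subseteq[q]$ share a common value. Each claim will be reduced to this criterion.

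\emph{Separability and the Radon number.} For $S_4$ separability: two disjoint boxes $\prod A_j,\prod B_j$ must have $A_j\cap B_j=\emptyset$ for some $j$. The cylinders $\{x:x_j\in A_j\}$ and $\{x:x_j\in [q]\setminus A_j\}$ are complementary boxes, so both are halfspaces, and they separate the two boxes.

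For the upper bound on $r$, fix $m$ points. A bipartition $A\mid B$ fails to be Radon iff some coordinate $j$ has $\pi_j(A)\cap\pi_j(B)=\emptyset$. That happens iff each level set of coordinate $j$ lies entirely on one side. So coordinate $j$ kills at most $2^q$ bipartitions, and at most $n2^q=O(n^2)$ bipartitions are killed in total. Once $2^{m-1}-1>n2^q$, a Radon partition exists, so $r\le 2\log_2 n+O(1)$.

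For the lower bound, take $m=\lfloor\log_2 n\rfloor$ points. Assign to each of the $2^{m-1}-1$ bipartitions $A\mid B$ its own coordinate, with value $1$ on $A$ and value $2$ on $B$; fill the remaining coordinates arbitrarily. Then every bipartition is separated in its assigned coordinate, so $r=\Theta(\log n)=\Theta(q)$.

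\emph{Lower bounds on $r_k$.} Take $N=\tfrac{\alpha}{3}rk\log k$ uniform random points of $[q]^n$ and union-bound over the at most $k^N$ labeled $k$-partitions. Fix a partition with part sizes $s_1,\dots,s_k$. A fixed value $v$ lies in all parts of a fixed coordinate with probability $\prod_i\bigl(1-(1-1/q)^{s_i}\bigr)$. Since $s\mapsto\log(1-e^{-s/q})$ is concave, Jensen shows this product is largest for equal sizes $s=N/k$. Hence the probability that coordinate $j$ has a common value is at most $q\exp(-k e^{-s/q})\approx q\exp(-k^{1-c})$, where $c$ is slightly below $\alpha$ (using $r\approx 2q$).

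Coordinates are independent, so the probability that a given partition is Tverberg is at most $\exp\bigl(-n(k^{1-c}-\log q)\bigr)$. The hypothesis $k^{1-\alpha}\ge 8\log r$ absorbs the $\log q$ term. The union bound then needs $n k^{1-c}\gtrsim N\log k\approx qk\log^2 k$. Using $k\le n$, this reduces to $n^{1-c}\gtrsim q\log^2 n$, which holds for large $n$.

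For $k=n$, the same computation gives $r_n=\Omega(rn\log n)=\Omega(r^2n)$, since $\log n=\Theta(r)$.

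\emph{Upper bound $r_n=O(nr^2)$.} Given $N=Cnq^2$ points, fix for each coordinate $j$ a value $v_j$ taken by at least $N/q$ points. Partition the points uniformly at random into $n$ parts of size $Cq^2$. A fixed part then receives about $Cq$ points with $x_j=v_j$, and it misses $v_j$ entirely with probability about $e^{-Cq}\le n^{-3}$ by a hypergeometric tail bound. A union bound over the $n$ parts and $n$ coordinates shows that some partition has every part containing every $v_j$, so the partition is Tverberg.

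\emph{Part 2.} For the lower bound, repeat the random argument but take only $n/t$ disjoint $t$-tuples of parts. These involve disjoint point sets, so they are independent. With part size $s$, each coordinate of a fixed tuple has a common value with probability at most $q(1-e^{-s/q})^t$. This drives the threshold to $s\approx q\log t$ when $t\le r$, and it is capped by the $k=n$ bound when $t\ge r$. Since $\log t\le t$, this yields $\Omega(rn\min\{r,t\})$.

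For the upper bound, the case $t\ge r$ follows from $r_{n,t}\le r_n=O(nr^2)$. For $t<r$, use parts of size $Cqt$. The idea is to show that, in every coordinate, each part meets enough heavy values that any $t$ parts must share one. For example, one could require each part to contain all but fewer than $1/t$ of the mass of the value distribution, then apply pigeonhole.

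I expect this $t<r$ upper bound to be the main obstacle. Light values make the pigeonhole step delicate, so the first thing I will try is restricting attention to the values of frequency at least $N/(2q)$ before applying it.
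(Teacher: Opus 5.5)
Your arguments for separability, the Radon number, Part~(1), and $r_n=\Theta(nr^2)$ are sound. For the upper bound $r_n=O(nq^2)$ you choose one heavy value $v_j$ per coordinate; this is a valid and somewhat simpler alternative to the paper's random-coloring estimate. (Minor fix: $(1-1/q)^s=e^{-cs}$ with $c=\log\frac{q}{q-1}>1/q$, so Jensen should be run with $e^{-s/(q-1)}$, not $e^{-s/q}$. The slack between your exponent and $\alpha$ absorbs this.)

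The genuine gap is the lower bound in Part~(2). Your per-coordinate estimate $q(1-e^{-s/q})^t$ exceeds $1$ at the relevant scale $s=\Theta(q)$ whenever $t\lesssim\log q$.
\begin{itemize}
\item For constant $t$ it only permits $s\lesssim q^{1-1/t}$. That gives $\Omega(nr^{1-1/t})$ instead of the headline $\Theta(rn)$.
\item For larger $t$ it permits at best $s\approx q\log(t/\log q)$, i.e.\ $\Omega(rn\log t)$. Your closing inference ``since $\log t\le t$'' then runs the wrong way: you need $\Omega(rnt)$, which is larger.
\item For $t\ge r$, ``capped by the $k=n$ bound'' is an upper-bound statement ($r_{n,t}\le r_n$). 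The matching lower bound needs the fact that $X_n$ has Helly number $q$, which you never use.
\end{itemize}
The missing idea is that uniform points of $[q]^n$ are the wrong test configuration for small $t$. Each hull projects onto too many values for $t$ of them to miss a common one. The paper instead samples from the subcube $Y_{t,n}=[t]^n$, whose induced box convexity has Helly number exactly $t$. There, $t$-wise intersecting hulls intersect totally, so a $t$-wise Tverberg partition is an ordinary one. Your Part~(1) computation with $q$ replaced by $t$, $k=n$, and $N=\lfloor\frac12 n(t-1)\log n\rfloor$ then gives $\Omega(tn\log n)=\Omega(rnt)$ for $t\le q$. For $t\ge q$, Helly number $q$ gives $r_{n,t}=r_n=\Theta(nr^2)$ directly.

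The upper bound for $t<r$ is also left unproved, and the heavy-value idea does not transfer. With parts of size $Cqt$, a part misses a fixed value of frequency $N/q$ with probability about $e^{-Ct}$. That is far above the $n^{-3}$ a union bound needs, so you cannot force every part to hit prescribed values. A mass-pigeonhole argument would therefore need a genuine concentration bound for the missing mass.

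The paper avoids this entirely:
\begin{enumerate}
\item Color the points with $n$ random colors, and fix a coordinate $j$ and $I\in\binom{[n]}{t}$.
\item The fibers $F_b=\{x:x_j=b\}$ are colored independently. Having no common value in coordinate $j$ requires some color of $I$ to be absent from every fiber. This has probability at most $\prod_b t(1-1/n)^{|F_b|}\le t^qe^{-N/n}$.
\item A union bound over the $n\binom nt$ pairs $(j,I)$ gives $r_{n,t}\le\lceil n(\log n+\log\binom nt+q\log t)\rceil+1$, which is $O(tn\log n)$ for $t\le q$.
\end{enumerate}
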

We note that while our argument uses multiset notions, at the end of the proof we will show that the point configurations in the lower bounds may be taken to be sets of distinct points, so the lower bound also holds for the classical (i.e., set-based) Tverberg number.

\begin{figure}[tbp]             
 \centering                           
 \resizebox{0.72\linewidth}{!}{%
\begin{tikzpicture}[x=1cm,y=1cm,font=\small,
  line cap=round,line join=round,
  ambient/.style={draw=black!23,line width=.4pt},
  fiber/.style={draw=black!13,line width=.3pt},
  member/.style={circle,inner sep=0pt,minimum size=3.8pt,
    draw=white,line width=.3pt}
]
\definecolor{boxTeal}{RGB}{37,115,123}
\tikzset{
  boundary/.style={draw=boxTeal,line width=.85pt},
  mesh/.style={draw=boxTeal!55,line width=.4pt}
}

\node[font=\normalsize] at (5.195,5.45)
  {Convex sets in $X_3=[4]^3$};

\foreach \panel/\dx in {0/0,1/6.10}
{
  \begin{scope}[
    shift={(\dx,1.20)},
    x={(.86cm,-.28cm)},
    y={(.57cm,.34cm)},
    z={(0cm,.86cm)}]

    \foreach \a in {0,3}
      \foreach \b in {0,3}
      {
        \draw[ambient] (0,\a,\b) -- (3,\a,\b);
        \draw[ambient] (\a,0,\b) -- (\a,3,\b);
        \draw[ambient] (\a,\b,0) -- (\a,\b,3);
      }

    \ifnum\panel=1
      \foreach \b in {0,1,2}
        \foreach \c in {0,1,2}
          \draw[fiber] (0,\b,\c) -- (3,\b,\c);
    \fi
    \foreach \a in {0,1,2,3}
      \foreach \b in {0,1,2,3}
        \foreach \c in {0,1,2,3}
          \fill[black!25] (\a,\b,\c) circle[radius=.85pt];

    \ifnum\panel=0
      \path[fill=boxTeal,fill opacity=.065]
        (0,0,2)--(2,0,2)--(2,2,2)--(0,2,2)--cycle;
      \path[fill=boxTeal,fill opacity=.065]
        (0,0,0)--(2,0,0)--(2,0,2)--(0,0,2)--cycle;
      \path[fill=boxTeal,fill opacity=.065]
        (2,0,0)--(2,2,0)--(2,2,2)--(2,0,2)--cycle;
      \foreach \a in {0,2}
        \foreach \b in {0,2}
        {
          \draw[boundary] (0,\a,\b) -- (2,\a,\b);
          \draw[boundary] (\a,0,\b) -- (\a,2,\b);
          \draw[boundary] (\a,\b,0) -- (\a,\b,2);
        }
      \draw[mesh]
        (1,0,0)--(1,0,2) (0,0,1)--(2,0,1)
        (2,1,0)--(2,1,2) (2,0,1)--(2,2,1)
        (1,0,2)--(1,2,2) (0,1,2)--(2,1,2);
      \foreach \a in {0,1,2}
        \foreach \b in {0,1,2}
          \foreach \c in {0,1,2}
            \node[member,fill=boxTeal] at (\a,\b,\c) {};
    \else
      \foreach \a in {0,3}
      {
        \path[fill=boxTeal,fill opacity=.085]
          (\a,0,0)--(\a,2,0)--(\a,2,2)--(\a,0,2)--cycle;
        \draw[boundary]
          (\a,0,0)--(\a,2,0)--(\a,2,2)--(\a,0,2)--cycle;
        \draw[mesh]
          (\a,1,0)--(\a,1,2)
          (\a,0,1)--(\a,2,1);
        \foreach \b in {0,1,2}
          \foreach \c in {0,1,2}
            \node[member,fill=boxTeal] at (\a,\b,\c) {};
      }
    \fi
  \end{scope}
}

\node[text=boxTeal] at (2.145,-.05) {$Y_{3,3}$};
\node[text=boxTeal] at (8.245,-.05) {$C$};
\end{tikzpicture}%
 }   
 \caption{Two convex sets in the discrete box space $X_3=[4]^3$. The set on the left figure is $\{1,2,3\}\times\{1,2,3\}\times\{1,2,3\}$ and the set on the right figure is $\{1,2,3\}\times\{1,4\}\times\{1,2,3\}$.}                                        
 \label{fig:box-space-construction}
\end{figure} 

\medskip We begin with several lemmas that establish properties of the space $(X_n,\mathcal{C}_n)$. 
\begin{lemma}\label{lem:box-separable}
The space $(X_n,\mathcal{C}_n)$ is $S_4$ separable.
\end{lemma}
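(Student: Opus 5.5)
The plan is to produce the separating halfspaces directly, using the product structure of boxes. First observe which sets are halfspaces. For any coordinate $j\in[n]$ and any $S\subseteq[q]$, the cylinder
\[
H_{j,S}=\{x\in[q]^n: x_j\in S\}=[q]\times\cdots\times S\times\cdots\times[q]
\]
is a box. Its complement $H_{j,[q]\setminus S}$ is also a box. Hence every such cylinder is a halfspace.

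Now let $A=A_1\times\cdots\times A_n$ and $B=B_1\times\cdots\times B_n$ be disjoint convex sets. If one of them is empty, say $A=\emptyset$, take $\bar A=\emptyset$ and $\bar B=X_n$. Both are halfspaces, since each is the complement of the other and both are convex. Otherwise all $A_j$ and $B_j$ are nonempty. Since
\[
A\cap B=\prod_j (A_j\cap B_j),
\]
disjointness forces some coordinate $j$ with $A_j\cap B_j=\emptyset$.

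For this $j$, set $\bar A=H_{j,A_j}$ and $\bar B=H_{j,[q]\setminus A_j}$. These are complementary halfspaces, hence disjoint. Clearly $A\subseteq\bar A$. Also $B\subseteq\bar B$, because $B_j\subseteq[q]\setminus A_j$.

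No step is a real obstacle. The only point needing care is the empty-set convention, which the case split above handles, together with the observation that an empty factor makes the whole box empty.
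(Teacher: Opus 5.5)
Your proposal is correct and follows essentially the same argument as the paper: coordinate slabs $\{x : x_j\in S\}$ are halfspaces, two disjoint nonempty boxes must have $A_j\cap B_j=\emptyset$ in some coordinate, and the slab over $A_j$ together with its complement separates them, with the empty case handled by $\emptyset$ and $X_n$.
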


\begin{proof}
For $j\in[n]$ and $A\subseteq[q]$, the slab $\{x\in X_n:x_j\in A\}$ is a box whose complement $\{x\in X_n:x_j\in[q]\setminus A\}$ is also a box. Hence slabs are halfspaces. Let $A=\prod_jA_j$ and $B=\prod_jB_j$ be disjoint nonempty boxes. Then $A_j\cap B_j=\emptyset$ for some coordinate $j$, since otherwise choosing $x_j\in A_j\cap B_j$ for every $j$ would produce a common point. The slab $\{x:x_j\in A_j\}$ contains $A$, and its complement contains $B$. If one of the two convex sets is empty, the halfspaces $\emptyset$ and $X_n$ separate them.
\end{proof}

\begin{lemma}\label{lem:box-radon}
The Radon number of $(X_n,\mathcal{C}_n)$ satisfies $q+1\leq r(X_n,\mathcal{C}_n)\leq q+\log_2 n+2$.
\end{lemma}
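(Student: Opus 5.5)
The proof has two parts: an explicit configuration for the lower bound, and a counting argument for the upper bound.

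\emph{Lower bound $r\geq q+1$.} We exhibit $q$ points with no Radon partition. Take $x^{(a)}=(a,a,\ldots,a)$ for $a\in[q]$. For any bipartition $A\sqcup B$ of these points, the first coordinate projections of $\conv$ of the two parts are the disjoint sets $A$ and $B$ (identified with subsets of $[q]$). Hence the two boxes are disjoint, and no Radon partition exists, so $r\geq q+1$.

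\emph{Upper bound.} Let $m=q+\lfloor\log_2 n\rfloor+2$; it suffices to show that any multiset $P$ of $m$ points has a Radon partition. A bipartition $A\sqcup B$ of $P$ is Radon exactly when, for every coordinate $j$, the projections $\pi_j(A)$ and $\pi_j(B)$ intersect. Equivalently, in each coordinate some value $v\in[q]$ must be taken by points of both parts. Fix a coordinate $j$ and group the points of $P$ by their value $x_j$, giving at most $q$ nonempty groups. A bipartition fails in coordinate $j$ precisely when every group lies entirely within one part. The number of ordered bipartitions that are constant on the groups is therefore at most $2^{q}$. We count ordered pairs $(A,B)$, including the trivial ones, so there are $2^m$ in total. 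By a union bound over the $n$ coordinates, the number of ordered bipartitions that fail in some coordinate is at most $n2^q$. Since $m\geq q+\log_2 n+1$, we have $2^m\geq 2n2^q>n2^q$, so some ordered bipartition fails in no coordinate.

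It remains to make sure this bipartition is nontrivial. The trivial bipartitions $(P,\emptyset)$ and $(\emptyset,P)$ are themselves constant on groups, so they are already among the failing ones counted in every coordinate. Hence any bipartition that survives the count satisfies $\pi_j(A)\cap\pi_j(B)\neq\emptyset$ for every $j$, which forces both $A$ and $B$ to be nonempty. The common point is then assembled coordinatewise, which is valid because boxes are products. This gives $r\leq q+\lfloor\log_2 n\rfloor+2\leq q+\log_2 n+2$.

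The main obstacle is the bookkeeping of constants. One has to check that it is enough to count ordered bipartitions, and that the slack in $m$ absorbs the factor $n$ coming from the union bound. Multisets pose no extra difficulty: repeated points simply fall into the same group, so the bound of at most $q$ groups per coordinate, and hence the $2^q$ count, still holds.
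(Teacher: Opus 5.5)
Your proof is correct and follows the paper's argument: the same diagonal configuration for the lower bound, and the same union bound over the $n$ coordinates on bipartitions that are constant on the value classes $\{x\in P: x_j=v\}$. The only cosmetic differences are these: you count ordered bipartitions, with the trivial ones included and correctly observed to be excluded automatically, rather than the $2^{|P|-1}-1$ unordered nontrivial ones; and you treat multisets directly instead of first reducing to distinct points, a reduction the counting does not actually need.
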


\begin{proof}
For the lower bound, consider the diagonal points $p_i=(i,i,\ldots,i)$ for $i\in[q]$. For any partition of this $q$-point set into two nonempty parts, the first-coordinate projections of the two parts are disjoint, so the box hulls are disjoint. Hence the diagonal has no Radon partition and $r(X_n,\mathcal{C}_n)>q$.

For the upper bound, let $P$ be a multiset with no Radon partition. If some point of $X_n$ appeared twice in $P$, placing the two copies in different parts and distributing the rest arbitrarily would give a Radon partition. Hence the points of $P$ are distinct. Say that a partition $P=P_1\sqcup P_2$ is separated by coordinate $j$ if $\pi_j(P_1)\cap\pi_j(P_2)=\emptyset$. Since convex hulls are product sets, a partition has disjoint convex hulls if and only if it is separated by some coordinate. As $P$ has no Radon partition, every one of the $2^{|P|-1}-1$ unordered nontrivial partitions of $P$ is separated by some coordinate. For a fixed $j$, a partition separated by $j$ assigns each value class $\{x\in P:x_j=v\}$, $v\in\pi_j(P)$, wholly to one side. So there are at most $2^{q-1}-1$ such partitions. Therefore $2^{|P|-1}-1\leq n(2^{q-1}-1)$, which gives $|P|\leq q+\log_2 n+1$. Hence every multiset with at least $q+\log_2 n+2$ points has a Radon partition.
\end{proof}

For $2\leq t\leq q$, let $Y_{t,n}=[t]^n\subseteq X_n$, endowed with the induced box convexity (i.e., the convex sets in $Y_{t,n}$ are all sets of the form $\{Y_{t,n} \cap C:C \in \mathcal{C}_n\}$).

\begin{lemma}\label{lem:box-helly-reay}
The Helly number of $Y_{t,n}$ is exactly $t$.
\end{lemma}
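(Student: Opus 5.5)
Two inequalities need to be shown: $h(Y_{t,n})\geq t$ and $h(Y_{t,n})\leq t$.

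\textbf{Lower bound.} I would give $t$ convex sets whose every $(t-1)$-subfamily meets but whose total intersection is empty. For $i\in[t]$, let
\[
C_i=\{x\in Y_{t,n}: x_1\neq i\}=([t]\setminus\{i\})\times[t]^{n-1}.
\]
Each $C_i$ is a box intersected with $Y_{t,n}$, so it is convex in the induced convexity.
\begin{itemize}
\item Omitting $C_i$, the remaining sets all contain the point $(i,1,\dots,1)$, so any $t-1$ of them intersect.
\item The intersection of all $t$ sets requires $x_1\notin[t]$, which is impossible, so it is empty.
\end{itemize}
Hence $h(Y_{t,n})\geq t$; this needs $t\geq 2$, which holds.

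\textbf{Upper bound.} Let $\calF$ be a finite family of convex sets in $Y_{t,n}$ in which every $t$ members intersect. Write each member as $F=\prod_j F_j$ with $F_j\subseteq[t]$. First, every $F_j$ is nonempty: each $F$ is nonempty, since it meets the others (take $t$ copies if $|\calF|<t$), and a box is nonempty exactly when all its factors are.

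Since a box intersection is the product of the coordinatewise intersections, it suffices to show that for each coordinate $j$, $\bigcap_{F\in\calF}F_j\neq\emptyset$. Fix $j$ and suppose this intersection is empty. Then for each value $v\in[t]$ pick a member $F^{(v)}$ with $v\notin F^{(v)}_j$. These are at most $t$ members (possibly with repeats), and their intersection has empty $j$-th factor. This contradicts the hypothesis that every $t$ members intersect.

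Consequently each coordinate intersection is nonempty, the product is a nonempty subset of $[t]^n$, and $h(Y_{t,n})\leq t$.

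The only delicate point is making precise that the induced convex sets are exactly the boxes $\prod_j A_j$ with $A_j\subseteq[t]$. This holds because $C\cap Y_{t,n}=\prod_j(C_j\cap[t])$. With that, the coordinatewise reduction goes through, and I expect no real obstacle.
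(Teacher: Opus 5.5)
Your proposal is correct and follows the paper's proof essentially verbatim: the lower bound uses the $t$ boxes $([t]\setminus\{b\})\times[t]^{n-1}$, and the upper bound reduces to a single coordinate and picks, for each value $v\in[t]$, one member omitting $v$. Your extra checks (that members are nonempty boxes, so the factors are well defined, and that induced convex sets are exactly the boxes $\prod_j A_j$ with $A_j\subseteq[t]$) are details the paper leaves implicit.
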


\begin{proof}
If a finite family of boxes in $Y_{t,n}$ has empty intersection, then in some coordinate the corresponding subsets of $[t]$ have empty intersection. For each of the $t$ values choose one member omitting it; at most $t$ boxes suffice. Conversely, in one fixed coordinate the $t$ boxes whose coordinate sets are $[t]\setminus\{b\}$, $b\in[t]$, have empty total intersection, while every $t-1$ of them intersect.
\end{proof}

\begin{lemma}\label{lem:box-reay-upper}
For every $2\leq t\leq n$,
\begin{equation}\label{Eq:Reay1}
 r_{n,t}(X_n,\mathcal{C}_n)
 \leq
 \left\lceil n\left(\log n+\log\binom nt+q\log t\right)\right\rceil+1.
\end{equation}
In particular,
\[
 r_{n,t}(X_n,\mathcal{C}_n)=O(tn\log n)\qquad(2\leq t\leq q),
 \qquad\text{and}\qquad
 r_n(X_n,\mathcal{C}_n)=O(n\log^2n).
\]
\end{lemma}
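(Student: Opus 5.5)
We use a first-moment argument over uniformly random partitions. Let $N=\lceil n(\log n+\log\binom nt+q\log t)\rceil+1$ and let $P$ be an arbitrary multiset of $N$ points of $X_n=[q]^n$. Assign each point of $P$ independently and uniformly to one of $n$ parts $P_1,\ldots,P_n$. We show that with positive probability every part is nonempty and every $t$ of the parts have intersecting box hulls. This gives a $t$-wise $n$-Tverberg partition.

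\emph{Reduction to one coordinate.} Since $\conv(P_i)=\prod_j\pi_j(P_i)$, a $t$-set $I$ of parts fails to intersect if and only if there is a coordinate $j$ with $\bigcap_{i\in I}\pi_j(P_i)=\emptyset$. Fix $I$ and $j$. Group the points of $P$ by their $j$-th coordinate value $v\in[q]$, with class sizes $m_v$ summing to $N$. The sets $\pi_j(P_i)$, $i\in I$, have empty common intersection exactly when, for each value $v$, some part of $I$ receives no point with $x_j=v$.

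\emph{Bounding the failure probability.} For each point, the probability that it lands in a given part $i$ is $1/n$, and the assignments are independent. Hence
\[
\Prob\Bigl[\bigcap_{i\in I}\pi_j(P_i)=\emptyset\Bigr]\le \prod_{v}\ \sum_{i\in I}(1-1/n)^{m_v}\le \prod_v t\,e^{-m_v/n}\le t^q e^{-N/n}.
\]
Here the product runs over at most $q$ values. The events for different $v$ are not independent, since one point cannot land in two parts. However, the events ``part $i_v$ misses class $v$'' for a fixed choice $(i_v)_v$ concern disjoint sets of points, so the probability of their conjunction factorizes as $\prod_v(1-1/n)^{m_v}$. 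We then take a union over the at most $t^q$ choices of $(i_v)_v$. Making this union-over-choice step rigorous is the one point requiring care.

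\emph{Union bound.} The bad events are: some part is empty, with probability at most $n(1-1/n)^N\le ne^{-N/n}$, and some pair $(I,j)$ fails, with probability at most $n\binom nt t^q e^{-N/n}$. The second term dominates, so the total is at most about $2n\binom nt t^qe^{-N/n}$. It is below $1$ once $N/n>\log n+\log\binom nt+q\log t$ up to a $\log 2$ slack. That slack is absorbed because the bound on the empty-part event can be sharpened, or it can be folded into the ceiling and the $+1$. If necessary we adjust by an additive constant, which the stated form tolerates only marginally, so we check constants here. A multiset of size exactly $N$ suffices, because a larger set can be handled by partitioning $N$ of its points and distributing the rest arbitrarily; adding points only enlarges hulls.

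\emph{Asymptotic consequences.} Using $q=\Theta(\log n)$ and $\log\binom nt\le t\log n$, the bound becomes $O(n(t\log n+q\log t))=O(tn\log n)$ for $t\le q$. For $t=n$ we have $\binom nn=1$, and the bound becomes $O(n(\log n+q\log n))=O(n\log^2 n)$.
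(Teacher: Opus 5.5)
Your proposal is correct and follows essentially the same route as the paper. Both use a uniform random $n$-coloring, bound the failure probability of a fixed pair $(I,j)$ fiber by fiber by $t^q(1-1/n)^N\le t^qe^{-N/n}$, and take a union bound over the $n\binom nt$ pairs. Two points need tightening.

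\textbf{The constant check you left open.} As written, your crude total $2n\binom nt t^qe^{-N/n}$ does not fall below $1$ at the stated $N$. The $+1$ only buys a factor $e^{-1/n}$, and $2e^{-1/n}>1$. The cleanest fix is the paper's: drop the separate empty-part event altogether. An empty part has empty hull, so any $t$-set containing it fails in every coordinate. Your own per-$(I,j)$ bound already covers this case. The union bound is then $n\binom nt t^qe^{-N/n}<1$, which holds exactly because the $+1$ makes $N/n$ strictly exceed $\log n+\log\binom nt+q\log t$. No additive adjustment to the stated bound is needed. Alternatively, the exact sum $(n+n\binom nt t^q)e^{-N/n}$ also stays below $1$, since $t^q\ge 2^q\ge 4n$.

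\textbf{Independence across fibers.} Contrary to your remark, the events for different values $v$ are independent. Each is determined by the colors of the points in its own fiber, and the fibers are disjoint sets of independently colored points. The paper simply multiplies the per-fiber bounds. Your union over choice functions $(i_v)_v$ is an equally valid route to the same bound and is already fully rigorous, so it needs no extra care.
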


\begin{proof}
Let $N \geq \left\lceil n\left(\log n+\log\binom nt+q\log t\right)\right\rceil+1$. Color a labeled multiset $P$ of $N$ points independently and uniformly with colors in $[n]$. We will show that with a positive probability, the convex hulls of any $t$ color classes have a non-empty intersection, which means that the coloring yields a $t$-wise Tverberg partition of $P$.

Fix $I\in\binom{[n]}t$ and a coordinate $j$. For $b\in[q]$, let $F_b$ be the multiset of all points in $P$ whose $j$th coordinate is $b$, and put $s_b=|F_b|$. The probability that some color in $I$ is absent from $F_b$ is at most $t(1-1/n)^{s_b}$. The fibers are disjoint, so the events belonging to distinct $b$'s are independent. Hence the probability that no coordinate value $b \in [q]$ occurs in every color of $I$ is at most
\[
 \prod_{b=1}^q t(1-1/n)^{s_b}
 =t^q(1-1/n)^N\leq t^qe^{-N/n}.
\]
By a union bound over the $n\binom nt$ pairs $(j,I)$, the probability that there exists a pair $(j,I)$ such that in the $j$th coordinate, no value $b \in [q]$ occurs in all colors of $I$, is at most $n\binom{n}{t} \cdot t^qe^{-N/n} < 1$, where the inequality holds due to the assumption on $N$. We may therefore fix a coloring for which, in every coordinate and for every $I$, some value $b \in [q]$ occurs in all colors of $I$. This means that for each $I$, the convex hulls of the color classes in $I$ intersect. This proves~\eqref{Eq:Reay1}.

If $2\leq t\leq q$, then $\log\binom nt=O(t\log n)$ and $q\log t=O(t\log n)$, giving $r_{n,t}(X_n)=O(tn\log n)$. Taking $t=n$ gives $r_n(X_n)=O(qn\log n)=O(n\log^2n)$.
\end{proof}

Now we are ready to prove Theorem~\ref{thm:lower}.

\begin{proof}[Proof of Theorem \ref{thm:lower}]
For any $n \in \mathbb{N}$, the space $(X_n,\mathcal{C}_n)$ is $S_4$ separable by Lemma~\ref{lem:box-separable}. Since $q=\lceil\log_2 n\rceil+2$, Lemma~\ref{lem:box-radon} gives
\begin{equation}\label{eq:box-radon-theta}
 q+1\ \leq\ r(X_n,\mathcal{C}_n)\ \leq\ 2q+1.
\end{equation}
\textbf{Proof of (1).}
Sample $N=\lceil\alpha k(q-1)\log k\rceil$ points $x_1,\ldots,x_N$ independently and uniformly from $X_n$. We show that with positive probability the resulting multiset admits no $k$-Tverberg partition.

Fix an assignment of the index set $[N]$ into $k$ parts, with sizes $s_1,\ldots,s_k$. Empty parts are allowed here; an assignment with an empty part never yields a Tverberg partition, and the bound below covers it trivially. Fix also a point $y\in X_n$. The point $y$ lies in the convex hull of an $s$-point independent uniform sample exactly when every coordinate of $y$ is attained by some sample point. Since the parts use disjoint sets of sample points,
\[
 \Prob\Bigl[y\in\bigcap_{i=1}^k\conv(\text{part }i)\Bigr]
 =\prod_{i=1}^k\Bigl(1-(1-1/q)^{s_i}\Bigr)^{n}.
\]
Write $c=\log\frac{q}{q-1}$, so that $(1-1/q)^s=e^{-cs}$ and $0<c\leq\frac1{q-1}$. The function $s\mapsto\log(1-e^{-cs})$ is concave. By Jensen's inequality, using $\sum_i s_i=N$,
\[
 \prod_{i=1}^k\bigl(1-e^{-cs_i}\bigr)^{n}
 \leq\bigl(1-e^{-cN/k}\bigr)^{nk}
 \leq\exp\bigl(-nk\,e^{-cN/k}\bigr)
 \leq\exp\bigl(-nk\,e^{-N/(k(q-1))}\bigr).
\]
By the choice of $N$,
\[
 e^{-N/(k(q-1))}\ \geq\ k^{-\alpha}e^{-1/(k(q-1))}\ \geq\ \tfrac12 k^{-\alpha}.
\]
Take the union bound over the $q^n$ choices of $y$ and the $k^N$ assignments. The probability that the sample admits a $k$-Tverberg partition is at most
\[
 \exp\Bigl(n\log q+N\log k-\tfrac12 nk^{1-\alpha}\Bigr)
 \leq\exp\Bigl(n\log q+2k(q-1)\log^2k-\tfrac12 nk^{1-\alpha}\Bigr).
\]
We claim that each of the two positive terms in the exponent is at most $\tfrac18 nk^{1-\alpha}$. For the first term this follows from $k^{1-\alpha}\geq 8\log r\geq 8\log q$, which is the lower end of the assumed range of $k$. For the second term, since $k\leq n$,
\[
 2k(q-1)\log^2k\leq 2n^{\alpha}\cdot k^{1-\alpha}(q-1)\log^2 n
 \leq \tfrac18 nk^{1-\alpha}
\]
once $16\,n^{\alpha}(q-1)\log^2n\leq n$, which holds for all sufficiently large $n$ because $q=O(\log n)$. Hence the probability of a $k$-Tverberg partition is at most 
\[\exp(-\tfrac14nk^{1-\alpha})\leq e^{-2n}<\tfrac14.
\]
Finally, the probability that the $N$ sampled points are not all distinct is at most $N^2/q^n=o(1)<\tfrac14$, since $N\leq 2nq\log n$ while $q^n\geq3^n$. So, with probability at least $\tfrac12$, the sample is a set of $N$ distinct points with no $k$-Tverberg partition. Consequently, 
\[r_k(X_n,\mathcal{C}_n)>N\geq\alpha k(q-1)\log k\geq\tfrac{\alpha}{3}\,r(X_n,\mathcal{C}_n)k\log k,
\]
where the last inequality uses $q-1\geq\frac{r(X_n,\mathcal{C}_n)-1}2-1\geq\frac {r(X_n,\mathcal{C}_n)}3$ for large $n$, by \eqref{eq:box-radon-theta}.

\medskip Lemma~\ref{lem:box-reay-upper}, with $t=n$, gives $r_n(X_n,\mathcal{C}_n)=O(n\log^2n)$. On the other hand, the lower bound just proved at $k=n$, applied with $\alpha=1/2$, gives $r_n(X_n,\mathcal{C}_n)=\Omega(rn\log n)=\Omega(n\log^2n)$. Together with $r=\Theta(\log n)$, this yields $r_n(X_n,\mathcal{C}_n)=\Theta(n\log^2n)=\Theta(nr^2)$.

\medskip \noindent \textbf{Proof of (2).}
Write $\rho_n=r(X_n)$. By \eqref{eq:box-radon-theta}, $\rho_n=\Theta(\log n)$. Suppose first that $2\leq t\leq q$. Lemma~\ref{lem:box-reay-upper} gives
\[
 r_{n,t}(X_n,\mathcal{C}_n)=O(tn\log n).
\]
For the reverse inequality, consider the space $Y_{t,n}=[t]^n\subseteq X_n$ endowed with the induced box convexity. Convex hulls of subsets of $Y_{t,n}$ computed in $X_n$ remain inside $Y_{t,n}$ and agree with their hulls in $Y_{t,n}$. The probabilistic argument used in the proof of~(1), with $q$ replaced by $t$, $k=n$, and $\alpha=1/2$, shows that $Y_{t,n}$ contains $\Omega(tn\log n)$ distinct points with no ordinary $n$-part Tverberg partition. Indeed, one takes $N=\lfloor\frac12n(t-1)\log n\rfloor$, and the union-bound estimates above hold for any $2\leq t\leq q=O(\log n)$. If these points had a $t$-wise Tverberg partition into $n$ parts in $X_n$, the convex hulls of the parts would be $t$-wise intersecting boxes in $Y_{t,n}$. Since $Y_{t,n}$ has Helly number $t$ by Lemma~\ref{lem:box-helly-reay}, all the convex hulls would intersect, a contradiction. Hence
\[
 r_{n,t}(X_n,\mathcal{C}_n)=\Theta(tn\log n)
 \qquad(2\leq t\leq q).
\]
If $q\leq t\leq n$, every $t$-wise intersecting family is $q$-wise intersecting, and the Helly number $q$ makes it totally intersecting. Therefore $r_{n,t}(X_n,\mathcal{C}_n)=r_n(X_n,\mathcal{C}_n)$. By~(1), $r_n(X_n,\mathcal{C}_n)=\Theta(n\log^2n)$. Combining the two ranges proves
\[
 r_{n,t}(X_n,\mathcal{C}_n)
 =\Theta\bigl(n\log n\min\{t,\log n\}\bigr)
 =\Theta\bigl(\rho_n n\min\{t,\rho_n\}\bigr).
\]
For fixed $t$ this is $\Theta_t(\rho_n n)$, as asserted.
\end{proof}

\section{Upper Bounds on \texorpdfstring{$t$}{t}-wise Tverberg numbers}\label{sec:reay}

In this section we prove Theorem~\ref{thm:intro-reay-general} which provides upper bounds on the $t$-wise Tverberg number in terms of the Radon number, in general and in separable convexity spaces.

\subsection{An upper bound in general convexity spaces}

In this subsection we prove Theorem~\ref{thm:intro-reay-general}(1). Let us restate it:

\medskip \noindent \textbf{Theorem~\ref{thm:intro-reay-general}(1).} Let $(X,\calC)$ be a convexity space with Radon number $r$. Then for any $2 \leq t \leq k$, we have 
\[
r_{k,t}(X,\calC) = O  \left(\left(r^{\lceil\log_2t\rceil}+\log\binom kt\right) k \right) .
\]    
In particular, $r_{k,2}(X,\calC) = O(k(r+\log k))$.

\medskip 

The proof is closely related to the proof of Theorem~\ref{thm:intro-radon}, but there are two notable differences. On the one hand, due to the weaker assertion compared to Tverberg's theorem (i.e., requiring only that every $t$ convex hulls have a non-empty intersection rather than all the $k$ convex hulls), a much better dependence on $r$ (i.e., the Radon number) can be obtained. Moreover, only the first part of the proof of Theorem~\ref{thm:intro-radon} which applies a Sauer-Shelah type argument is required, while the whole second part can be replaced by a union bound. On the other hand, the dependence on $k$ (i.e., the number of parts) is worse than in Theorem~\ref{thm:intro-radon}, since where $t$ is smaller than the Helly number $h$, one cannot apply Helly-type arguments (like those used in the second part of the proof of Theorem~\ref{thm:intro-radon}) when $t$-tuples are considered. The short proof below essentially applies the first part of the proof of Theorem~\ref{thm:intro-radon} and then applies a union bound.

\begin{proof}[Proof of Theorem~\ref{thm:intro-reay-general}(1)]
Let $N$ be a large integer to be chosen below. Color the points of an $N$-point labeled multiset independently and uniformly with colors in $[k]$.
Fix $I\in\binom{[k]}{t}$, and let $U_I$ be the set of points receiving a color in $I$.
Conditional on $|U_I|=s$, its induced coloring is uniform with $t$ colors.
Applying Lemma~\ref{lem:coloring-pattern} with $q=t$, averaging over $|U_I|$ and using the identity $\mathsf{Bin}(\mathsf{Bin}(N,t/k),1/t)\sim\mathsf{Bin}(N,1/k)$ (like in the proof of Proposition~\ref{prop:random-bins}), we obtain
\[
\Pr\left[
\bigcap_{i\in I}\conv(P_i)=\emptyset
\right]
\le
\Pr\left[\mathsf{Bin}(N,1/k)<r_t\right].
\]
By a Chernoff bound, for $N\geq8k\left(r_t+\log\left(2\binom kt\right)\right)$, the right hand side is at most $(2\binom kt)^{-1}$. Hence, by a union bound over the $\binom kt$ choices of $I$, the probability that there exists $I$ for which  
$\bigcap_{i\in I}\conv(P_i)=\emptyset$ is less than $1$. Hence, there exists a coloring of $P$ such that the convex hulls of every $t$ color classes intersect.  
Hence,
\[
 r_{k,t}=O\left(k\left[r_t+\log\binom kt\right]\right).
\]
Jamison's bound $r_t\leq r^{\lceil\log_2t\rceil}$ proves the second
inequality in Theorem~\ref{thm:intro-reay-general}. In particular, for a 
fixed $t$ the result is
$O_t(k[r^{\lceil\log_2t\rceil}+\log k])$, and for $t=2$ it is
$O(k[r+\log k])$.
\end{proof}

\subsection{An upper bound in $S_4$ separable spaces}

In this subsection we prove Theorem~\ref{thm:intro-reay-general}(2). Let us restate it:

\medskip \noindent \textbf{Theorem~\ref{thm:intro-reay-general}(2).} Let $(X,\calC)$ be an $S_4$ separable convexity space with Radon number $r$. Then
\[
r_{k,t}(X,\calC)=
\begin{cases}
O\bigl((rt\log t)k\bigr), & t\le r,\\
O\bigl((r^2\log r)k\bigr), & t\ge r.
\end{cases}
\]
In particular, we have $r_{k,t}(X,\calC)=O(rk)$ for any constant $t$.

\medskip 


The proof uses two components: A variant of the classical $\varepsilon$-net theorem and basic properties of $S_4$ separable spaces. We briefly introduce the required notions.

\paragraph{VC-dimension and the $\varepsilon$-net theorem.}
A set $W$ of vertices in a hypergraph $\mathcal{H}=(V,\mathcal{E})$ is said to be \emph{shattered} by $\mathcal{E}$ if any subset of $W$ is the trace of some hyperedge -- i.e., if $\{W \cap e:e \in \mathcal{E}\}=2^W$. The VC dimension of $\mathcal{H}$ is the maximum size of a vertex set shattered by $\mathcal{E}$. An \emph{$\varepsilon$-net} of $\mathcal{H}$ is a set $S$ of vertices that pierces any `heavy' hyperedge -- formally, a set $S$ such that for any $e \in \mathcal{E}$ with $|e| \geq \varepsilon |V|$, we have $e \cap S \neq \emptyset$. The classical $\varepsilon$-net theorem of Haussler and Welzl~\cite{HausslerWelzl} asserts, essentially, that if the VC-dimension of $\mathcal{H}$ is bounded then it admits a small-sized $\varepsilon$-net. In the form we use it below, which is due to Blumer, Ehrenfeucht, Haussler, and Warmuth~\cite{BEHW89} (see also~\cite{KPW92}), the theorem asserts that there exists a constant $c_0$ such that if the VC-dimension of $\mathcal{H}$ is at most $d$, then a uniformly random subset of size
\[
m_0=\left\lceil c_0\frac{d}{\varepsilon}
        \log\frac{2}{\varepsilon}\right\rceil
\]
is an $\varepsilon$-net with probability at least $3/4$, provided $m_0\le |V|$.
We need the following slight generalization.
\begin{lemma}\label{lem:packed-nets-direct}
If $|V|\geq2m_0k$, then $V$ contains $k$ pairwise disjoint
$\varepsilon$-nets, each of size $m_0$.
\end{lemma}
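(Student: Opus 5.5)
We plan to take a uniformly random ordering (equivalently, a uniformly random partition) of $V$, and read off from it $k$ disjoint uniformly random $m_0$-subsets. Since $|V|\geq 2m_0k\geq m_0k$, we can choose pairwise disjoint sets $S_1,\ldots,S_k$ by taking a uniformly random injection $[k]\times[m_0]\to V$ and letting $S_i$ be the image of $\{i\}\times[m_0]$. Each $S_i$ individually is then a uniformly random $m_0$-subset of $V$. By the Blumer--Ehrenfeucht--Haussler--Warmuth form of the $\varepsilon$-net theorem, which applies since $m_0\leq|V|$, each $S_i$ fails to be an $\varepsilon$-net with probability at most $1/4$.

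The difficulty is that a union bound over $k$ sets does not work directly, since $k/4$ may exceed $1$. We therefore aim for an expected count rather than a union bound. We draw $2k$ disjoint random $m_0$-subsets $S_1,\ldots,S_{2k}$, which is possible because $|V|\geq 2m_0k$. By linearity of expectation, the expected number of indices $i$ for which $S_i$ is not an $\varepsilon$-net is at most $2k/4=k/2$. Hence some outcome has at most $k/2$ failures, and therefore at least $3k/2\geq k$ of the $S_i$ are $\varepsilon$-nets. These are pairwise disjoint and each has size $m_0$, so retaining $k$ of them proves the lemma.

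The only point needing care is that the marginal distribution of each $S_i$ is exactly uniform over $\binom{V}{m_0}$, so that the $3/4$ success guarantee applies to each set separately. This follows from the symmetry of a uniform random injection. No independence between the sets is needed, because linearity of expectation does not require it.
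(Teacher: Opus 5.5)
Your proposal is correct and is essentially the paper's proof: the paper also takes a uniformly random permutation, splits its first $2m_0k$ elements into $2k$ blocks whose marginals are uniform $m_0$-subsets, and uses linearity of expectation to get at least $3k/2>k$ disjoint blocks that are $\varepsilon$-nets. Your uniform random injection with $2k$ blocks is the same construction.
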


\begin{proof}
Choose a uniformly random permutation of $V$ and split its first $2m_0k$
elements into $2k$ consecutive blocks of size $m_0$. The marginal distribution of every block is uniform over $m_0$-subsets of $V$, so each block is an $\varepsilon$-net
with probability at least $3/4$. By linearity of expectation, the expected number of blocks that form an $\varepsilon$-net is at least $3k/2$. Hence, there exists a permutation for which at least $3k/2>k$ blocks form $\varepsilon$-nets. These blocks are disjoint by construction.
\end{proof}
We shall apply the lemma to the \emph{halfspaces hypergraph} of $(X,\mathcal{C})$, whose vertex set is $X$ and whose hyperedges are all halfspaces in $\mathcal{C}$, whose VC dimension we denote by $d$. 

\paragraph{Properties of $S_4$ separable spaces.} We use two properties of $S_4$ separable spaces.
First, in any $S_4$-separable convexity space, we have $d=r-1$ (see~\cite[Lemma~3.1]{KellerSmorodinsky}). Second, if convex sets
$C_1,\ldots,C_m$ have empty intersection, then there are halfspaces $H_1,\ldots,H_m$ such that
$C_i\subseteq H_i$ for every $i$ and $\bigcap_iH_i=\emptyset$ (see~\cite[Lemma~4.1]{AlonSmorodinsky}). 

\medskip Now we are ready to present the proof of the theorem.

\begin{proof}[Proof of Theorem~\ref{thm:intro-reay-general}(2)]
Denote the Helly number of $(X,\mathcal{C})$ by $h$. If $h=1$, every finite family of nonempty convex sets intersects and
$r_{k,t}=k$, so assume $h\geq2$. Put $s=\min\{t,h\}$. Let $P$ be a
labeled multiset, and consider on its labeled copies the ranges induced by halfspaces. Labelling repeated copies does not raise VC-dimension, so this range space has VC-dimension at most $d$.
Apply Lemma~\ref{lem:packed-nets-direct} with $\varepsilon=1/s$. If
\[
|P|\geq Cdks\log(2s),
\]
we obtain $k$ disjoint $1/s$-nets $Y_1,\ldots,Y_k$. 
Distribute the unused points arbitrarily,
obtaining a partition $P=P_1\sqcup\cdots\sqcup P_k$ with
$Y_i\subseteq P_i$.

Suppose that the convex hulls of some $t$ parts have empty intersection. If
$t<h$, retain all these $t=s$ convex hulls; if $t\geq h$, the Helly property
supplies a subfamily of at most $h=s$ of them with empty intersection.
In either case there is an index set $J$ with $|J|\leq s$ such that
$\bigcap_{j\in J}\conv(P_j)=\emptyset$. By the aforementioned property of $S_4$ separable spaces, there are halfspaces $H_j$, $j\in J$,
with $\conv(P_j)\subseteq H_j$ and $\bigcap_{j\in J}H_j=\emptyset$.
Their complementary halfspaces cover $P$, so for some $j\in J$,
\[
|P\cap(X\setminus H_j)|\geq |P|/|J|\geq|P|/s.
\]
The $1/s$-net $Y_j$ must meet $X\setminus H_j$, contradicting
$Y_j\subseteq P_j\subseteq\conv(P_j)\subseteq H_j$. Thus, the convex hulls of every $t$ parts
intersect, and
\[
                         r_{k,t}\leq Cdks\log(2s).
\]
Finally, as we have $d=r-1$ and
$h<r$, the right hand side is at most
\[
 O\left(rk\min\{t,r\}\log(2\min\{t,r\})\right)
 =O(rkt\log(2t)).
\]
At $t=k$, the same direct argument gives
$r_k=O(dhk\log(2h))=O(r^2k\log r)$, as asserted.
\end{proof}

\begin{remark}
    As was mentioned in the introduction, Cho et al.~\cite{CHJL} obtained the same upper bound $r_k=O(dhk\log(2h))$ as in Theorem~\ref{thm:intro-reay-general}(2), under the essentially weaker $S_3$ separability assumption. Their proof uses $\varepsilon$-nets via Lemma~\ref{lem:packed-nets-direct} like our proof, and instead of our halfspaces argument that relies on the $S_4$ separability assumption, they use a more elaborate \emph{centerpoint theorem} argument that applies under the $S_3$ assumption. Our argument has the advantage that it provides improved bounds on the $t$-wise Tverberg numbers $r_{k,t}$; the argument of Cho et al.~\cite{CHJL} does not apply for $t<h$, as it relies on Helly's theorem. Yet another proof of the bound $r_k=O(dhk\log(2h))$, using the colorful Helly theorem and the Sauer-Shelah lemma, is presented in Appendix~\ref{app:s4}.  
\end{remark}

\section{Applications}\label{sec:applications}

Our improved upper bounds on the colorful Helly and Tverberg numbers in terms of the Radon number in general convexity spaces (i.e., Theorems~\ref{thm:intro-colorful} and~\ref{thm:main}) lead to improved quantitative bounds for several other theorems: the fractional Helly theorem, the selection lemma, the weak $\varepsilon$-net theorem and the $(p,q)$-theorem, in general convexity spaces.
In this section we present the improved results. 
As the proofs (except for the proof of the fractional Helly theorem) consist mostly of substituting our improved bounds into classical leveraging frameworks, the proofs are presented very briefly. 


Throughout this section, we assume that $(X,\mathcal{C})$ is an abstract convexity space with Helly number $h\geq2$ and Radon number $r$. We introduce two notations that will appear in the assertions of the theorems below. The first is the bound of Theorem~\ref{thm:intro-colorful} on $h_c$. Put
\begin{equation}\label{eq:applications-L}
 L:=1+\sum_{t=1}^{h-1}(2^{r-t}-2)(h-t)<2^r h .
\end{equation}
The second notation will appear in the fractional Helly theorem. For $0<\alpha\leq1$, define
\begin{equation}\label{eq:applications-beta}
 \beta_{h,L}(\alpha)
 :=\left(\frac{\alpha}{2}\right)^{h^{L-1}}
   (12hL)^{-\frac{h(h^{L-1}-1)}{h-1}} .
\end{equation}

\paragraph{Fractional Helly theorem.} The fractional Helly number $h_f$ of an abstract convexity space $(X,\mathcal{C})$ is the least integer $q$ with the property that for every $\alpha>0$ there exists $\beta(\alpha)>0$ such that the following holds. If for some finite $\mathcal{F} \subset \mathcal{C}$, an $\alpha$-fraction of the $q$-subfamilies of $\mathcal{F}$ intersect, then some point belongs to $\beta |\mathcal{F}|$ elements of $\mathcal{F}$.
\begin{theorem}[Fractional Helly]\label{thm:applications-fh}
Let $(X,\mathcal{C})$ be an abstract convexity space with Radon number $r$ and Helly number $h$, and let $L,\beta_{h,L}(\alpha)$ be as defined in~\eqref{eq:applications-L},~\eqref{eq:applications-beta}, respectively.
Let $\calF$ be a finite labeled family of at least $L$ convex sets. If at
least an $\alpha$-fraction of its $L$-subfamilies intersect, then some
point belongs to at least a $\beta_{h,L}(\alpha)$-fraction of the sets in $\calF$.
In particular, $h_f(X,\mathcal{C}) \leq L$.
\end{theorem}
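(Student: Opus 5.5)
We will prove the statement by induction on the level of a "rainbow hierarchy", in the spirit of the Holmsen--Lee reduction $h_f\le h_c$. The shape of $\beta_{h,L}(\alpha)$ suggests this: the exponent $h^{L-1}$ and the geometric sum $h(h^{L-1}-1)/(h-1)=h+h^2+\cdots+h^{L-1}$ indicate $L-1$ successive steps. Each step raises the density to the power $h$ and loses a factor $(12hL)^{-h^j}$.

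Set $n=|\calF|$ and call an $L$-subfamily \emph{good} if it intersects. We introduce an intermediate parameter $m$ running from $L$ down to $1$. The induction hypothesis at level $m$ is that at least an $\alpha_m$-fraction of the $m$-subfamilies of $\calF$ intersect, where $\alpha_L=\alpha$. At level $m=1$ this is the desired conclusion, with a point-free reformulation: for level $1$ we need a point, so the final step will instead use Helly. The step from $m$ to $m-1$ uses the colorful Helly bound, namely Theorem~\ref{thm:intro-colorful} in the sharper form $h_c\le L$ from the proof. It is a supersaturation/removal argument, like the matching argument in Lemma~\ref{lem:near-helly}.

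The concrete plan is as follows. By averaging, first find many $(m-1)$-subfamilies $S$ that extend to good $m$-subfamilies by at least $\alpha_m n/2$ choices of an extra set. For a fixed $S$, let $\calF_S$ be the sets $F$ with $S\cup\{F\}$ good. We want to show that many $(m-1)$-subfamilies themselves have large "extension degree" measured one level down. The colorful Helly theorem enters as follows. Take $L$ random disjoint blocks of $\calF$, each viewed as a colour class. If no colour class has a large intersecting core, then by colorful Helly some rainbow $L$-tuple is non-intersecting. Counting these rainbow non-intersecting tuples against the assumed $\alpha$-density, in the style of the double count $B\ge\binom{s}{h_c}/\binom{s-h}{h_c-h}$, gives a contradiction.

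Iterating, each application replaces density $\alpha_m$ by roughly $(\alpha_m/(12hL))^h$. Here the exponent $h$ arises because, by Helly, a non-intersecting rainbow tuple contains a non-intersecting subtuple of size at most $h$. The resulting Bernoulli/Jensen loss is an $h$-th power, and the constant $12hL$ absorbs the binomial estimates $\binom{n}{h}\le n^h$ and the random block partitioning. After $L-1$ steps this telescopes to $\beta_{h,L}(\alpha)$. At the final level, the many intersecting $h$-subfamilies together with Helly give a point in a $\beta$-fraction of the sets. The final claim $h_f\le L$ is then immediate from the definition.

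The main obstacle is the single inductive step. We must make precise which density is being propagated, showing how the colorful Helly property converts "an $\alpha$-fraction of good $L$-tuples" into "an $\alpha^h/(12hL)^h$-fraction of a more concentrated structure", without losing factors depending on $n$. I would first try to follow the Holmsen--Lee framework verbatim, substituting $L$ for their $h_c$ and checking that their constants produce exactly $12hL$. If that fails, I would redo their averaging using the explicit matching count from Lemma~\ref{lem:near-helly}.
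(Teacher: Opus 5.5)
\textbf{There is a genuine gap.} Your fallback of substituting $h_c\le L$ into the Holmsen--Lee reduction is what the paper does. However, the mechanism you actually sketch for the inductive step would not work, and you never identify the tool that does the work.

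First, your level-$m$ hypothesis (an $\alpha_m$-fraction of the $m$-subfamilies intersect) carries no information. By double counting, if an $\alpha$-fraction of the $L$-subfamilies intersect, then at least an $\alpha$-fraction of the $m$-subfamilies intersect for every $m\le L$, with no loss. At $m=1$ the statement is trivial. So iterating it can never produce a point lying in a linear fraction of the sets.

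Second, the random-block step gives only negligible information. Colorful Helly applies only when every block has empty intersection, and then it yields one non-intersecting rainbow $L$-tuple per partition. Averaging over partitions shows only that a fraction of order $(L/n)^L$ of the $L$-tuples fail to intersect.

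Third, the matching count of Lemma~\ref{lem:near-helly} has the same limitation. It only rules out families in which fewer than about a $(4h^2L)^{-h}$-fraction of the $h$-tuples are non-intersecting, which is the near-complete regime, not an arbitrary density $\alpha>0$.

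The missing ingredient is Holmsen's forbidden-substructure theorem, the hypergraph analogue of the Gy\'arf\'as--Hubenko--Solymosi result on graphs with no induced $K_{2,2}$. It says a dense hypergraph avoiding a colorful ``missing edge'' configuration contains a clique on a linear fraction of the vertices. The paper applies it, through the Holmsen--Lee reduction, to the $h$-uniform intersection hypergraph of $\calF$:
\begin{itemize}
\item The excluded configuration is $L$ disjoint non-intersecting subfamilies all of whose rainbow selections intersect. This is exactly what $h_c\le L$ (Theorem~\ref{thm:intro-colorful}) forbids.
\item Helly's theorem turns the resulting clique into a common point.
\item The $(L-1)$-fold iterate $g_{L-1}$ of $f(x)=(x/(12hL))^h$ comes from the quantitative form of that theorem, not from propagating densities of $m$-subfamilies.
\end{itemize}

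You also miss a second step of the paper's proof: that theorem only applies to sufficiently large families. The paper removes this restriction by replacing every member by $u$ labeled copies. For $n\ge L(L-1)$, the fraction of intersecting $L$-subfamilies of the blow-up tends to at least $\alpha\, n(n-1)\cdots(n-L+1)/n^L\ge\alpha/2$. For $L\le n<L(L-1)$, a single intersecting $L$-subfamily already gives a fraction $L/n>1/(L-1)$, which exceeds $\beta_{h,L}(\alpha)$. This is why $\beta_{h,L}(\alpha)=g_{L-1}(\alpha/2)$. Your claim that $L-1$ steps starting from $\alpha$ telescope to exactly $\beta_{h,L}(\alpha)$ overlooks this: that telescoping gives $g_{L-1}(\alpha)$, and only for large $n$.
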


\begin{proof}
For $\alpha=1$ the assertion follows directly from Helly's theorem, so
assume $0<\alpha<1$. Apply the quantitative form of Holmsen's
forbidden-substructure theorem~\cite{Holmsen20}, through the reduction of
Holmsen and Lee~\cite{HolmsenLee}, to the $h$-uniform intersection
hypergraph of $\calF$. Theorem~\ref{thm:intro-colorful} excludes the
required colorful obstruction. With $f(x)=(x/(12hL))^h$ and $g_{\ell}(x)$ defined as the $\ell$-fold iteration of $f$ (i.e., 
$g_{\ell}(x)=f \circ\ldots \circ f(x)$), their argument
produces, for all sufficiently large families, a clique on a
$g_{L-1}(\alpha)$-fraction of the vertices. Helly's theorem turns this clique
into a subfamily with common intersection.

It remains to remove the ``sufficiently large'' assumption on $|\mathcal{F}|$. To this end, 
replace every member of $\mathcal{F}$ by 
$u$ labeled copies. If $n=|\calF|\geq L(L-1)$, the proportion of
intersecting $L$-subfamilies in the blow-up tends to at least
$\alpha n(n-1)\cdots(n-L+1)/n^L\geq\alpha/2$. Hence, the resulting common-intersection
subfamily has relative size at least
\[
 g_{L-1}(\alpha/2)
 =
 \left(\frac{\alpha}{2}\right)^{h^{L-1}}
 (12hL)^{-\frac{h(h^{L-1}-1)}{h-1}}
 =\beta_{h,L}(\alpha).
\]
If $L\leq n<L(L-1)$, one intersecting $L$-subfamily already gives a
larger relative bound.
\end{proof}

\paragraph{Selection lemma.} A selection lemma asks for a point contained in the convex hulls of a
positive proportion of the subsets of a prescribed size.  Set
\begin{align}
A&:=\left\lceil20L\left(
r^{\lceil\log_2h\rceil}+h\log(4h^2L)\right)\right\rceil,
\label{eq:applications-A}\\
\delta_0&:=(eL)^{-A \cdot L},\qquad
\lambda_0:=\beta_{h,L}(\delta_0),\qquad
C_{r,h}:=\max\{A \cdot L,e^A/\lambda_0\}.
\label{eq:applications-constants}
\end{align}
By Theorem~\ref{thm:main} and Jamison's bound $r_k \leq r^{\lceil \log_2 k \rceil}$, we have $r_L\leq A$. Note that since $L \leq h2^r$, we have
\[A=O\left(2^r h\left[
r^{\lceil\log_2h\rceil}+h(r+\log h)\right]\right)
\leq2^{r+O((\log r)^2)}.\]

\begin{lemma}[Selection lemma]\label{lem:applications-selection}
Let $(X,\mathcal{C})$ be an abstract convexity space with Radon number $r$ and Helly number $h$, and let $L, A, \lambda_0$ be as defined in~\eqref{eq:applications-L},~\eqref{eq:applications-A},~\eqref{eq:applications-constants}, respectively. If $P$ is a labeled set of $n\geq A \cdot L$ points, then some point belongs to the convex
hulls of at least $\lambda_0\binom nA$ of its $A$-subsets.
\end{lemma}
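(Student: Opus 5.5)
Following the classical framework of Alon, B\'ar\'any, F\"uredi and Kleitman, the plan is to deduce the selection lemma from two earlier results: the bound $r_L\leq A$ (via Theorem~\ref{thm:main}), and the fractional Helly theorem (Theorem~\ref{thm:applications-fh}) with fractional Helly number $L$.

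First, I set up the family of convex sets. To each $A$-subset $S\subseteq P$ assign the convex set $\conv(S)$, and let $\calF$ be the labeled family of these $\binom nA$ hulls. The goal is then to find a point lying in at least a $\lambda_0$-fraction of the members of $\calF$. By Theorem~\ref{thm:applications-fh}, applied with $\alpha=\delta_0$, it suffices to show that at least a $\delta_0$-fraction of the $L$-subfamilies of $\calF$ intersect. Indeed, $|\calF|=\binom nA\geq L$ since $n\geq AL$, so the theorem then gives the fraction $\beta_{h,L}(\delta_0)=\lambda_0$.

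Next, I produce many intersecting $L$-subfamilies by a Tverberg counting argument. Take any $AL$-subset $Q\subseteq P$. Since $|Q|=AL\geq r_L$ (as $r_L\leq A$), $Q$ has a Tverberg partition into $L$ parts. Merging extra points into parts preserves the intersection of the hulls, so I can rebalance the partition to have $L$ parts each of size exactly $A$: take a Tverberg partition of some $r_L$-subset and distribute the remaining points so each part has size $A$. This yields an intersecting $L$-subfamily of $\calF$ consisting of pairwise disjoint $A$-sets whose union is $Q$. Distinct $Q$'s give distinct such subfamilies, since the union recovers $Q$. Hence the number of intersecting $L$-subfamilies is at least $\binom{n}{AL}$, while the total number of $L$-subfamilies is $\binom{\binom nA}{L}\leq \binom nA^L/L!$.

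It remains to bound the ratio $\binom{n}{AL}L!/\binom nA^{L}$ from below by $(eL)^{-AL}=\delta_0$. Use $\binom{n}{AL}\geq (n/AL)^{AL}$ and $\binom nA\leq (en/A)^A$, so the ratio is at least $L!\,(n/AL)^{AL}(A/en)^{AL}=L!\,(eL)^{-AL}\geq\delta_0$. (If labeled multiplicities in $\calF$ matter, the count of subfamilies is of labeled $L$-subsets, and the same estimate applies.) This calculation is the main point to verify carefully, particularly the interplay between labeled subfamilies and disjointness. The other subtle step is the rebalancing to exact part sizes $A$, which uses only monotonicity of $\conv$.

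Finally, the constant $C_{r,h}$ is not needed in the statement itself; it presumably serves the later weak $\varepsilon$-net application. The conclusion is a point in at least $\lambda_0\binom nA$ hulls.
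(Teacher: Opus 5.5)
Your proposal is correct and follows the paper's proof. You Tverberg-partition each $AL$-subset into $L$ disjoint $A$-sets with intersecting hulls, bound the fraction of intersecting $L$-tuples by $\binom{n}{AL}/\binom{\binom nA}{L}\ge (eL)^{-AL}=\delta_0$, and apply Theorem~\ref{thm:applications-fh} with $\alpha=\delta_0$. You also make explicit two steps the paper leaves implicit: the rebalancing into parts of size exactly $A$ (valid since each part of a Tverberg partition of an $r_L$-subset has at most $r_L\le A$ points), and the binomial estimates.
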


\begin{proof}
Every $(A \cdot L)$-subset contains an $L$-part Tverberg partition whose parts
can be enlarged to size $A$, and therefore yields an intersecting
$L$-tuple of convex hulls of $A$-subsets. Thus, at least
\[
 \frac{\binom n{A \cdot L}}{\binom{\binom nA}L}
 \geq(eL)^{-A \cdot L}=\delta_0
\]
of these $L$-tuples intersect.  Theorem~\ref{thm:applications-fh}
gives the assertion.
\end{proof}

\paragraph{Weak $\varepsilon$-net theorem.} For a finite labeled point set $P$ and $0<\varepsilon\leq1$, a weak
$\varepsilon$-net is a set of points in $X$ meeting every convex set
that contains at least $\varepsilon|P|$ labeled members of $P$.

\begin{theorem}[Weak $\varepsilon$-nets]\label{thm:applications-nets}
Let $(X,\mathcal{C})$ be an abstract convexity space with Radon number $r$ and Helly number $h$, and let $A, C_{r,h}$ be as defined in~\eqref{eq:applications-A},~\eqref{eq:applications-constants}, respectively. Every finite labeled point set has a weak $\varepsilon$-net of size at
most
\[
 C_{r,h}\varepsilon^{-A} \leq C_{r,h}\varepsilon^{-2^{r+O((\log r)^2)}}.
\]
\end{theorem}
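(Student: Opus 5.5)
The plan is the classical Alon–Bárány–Füredi–Kleitman greedy construction, with the selection lemma (Lemma~\ref{lem:applications-selection}) as the engine. We build the net $W$ iteratively, starting from $W=\emptyset$. At each stage consider a convex set $C$ that contains at least $\varepsilon|P|$ labeled members of $P$ and misses $W$; if there is none, $W$ is already a weak $\varepsilon$-net and we stop.

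When such a $C$ exists, let $Q=P\cap C$, a labeled multiset with $|Q|\geq\varepsilon n$, where $n=|P|$. The step proceeds as follows.
\begin{enumerate}
\item Suppose first that $|Q|\geq A\cdot L$. Apply Lemma~\ref{lem:applications-selection} to $Q$ to get a point $x$ lying in the convex hulls of at least $\lambda_0\binom{|Q|}{A}\geq\lambda_0\binom{\lceil\varepsilon n\rceil}{A}$ of the $A$-subsets of $Q$. Each of these is also an $A$-subset of $P$.
\item Add $x$ to $W$. Every $A$-subset $S$ of $Q$ satisfies $S\subseteq C$, hence $\conv(S)\subseteq C$ by convexity.
\item The $A$-subsets of $P$ whose hulls contain $x$ therefore all lie in the current $C$.
\end{enumerate}

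For the counting argument, charge to each step the family $\mathcal S_x$ of $A$-subsets of $Q$ whose hulls contain $x$. These families are disjoint across steps: a later set $C'$ misses $x$, so any $A$-subset of $P\cap C'$ has its hull inside $C'$ and hence not containing $x$. Consequently the number of steps is at most
\[
\binom nA\Big/\Bigl(\lambda_0\binom{\lceil\varepsilon n\rceil}{A}\Bigr)\lesssim\frac{1}{\lambda_0}\Bigl(\frac{n}{\varepsilon n-A}\Bigr)^{A}.
\]
To control this ratio, use $\binom nA/\binom mA\leq(n/m)^A\cdot e^{A}$ when $m\geq A\cdot L\geq 2A$, or a similar crude estimate. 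This gives at most $e^A\lambda_0^{-1}\varepsilon^{-A}\leq C_{r,h}\varepsilon^{-A}$ points, which matches the definition of $C_{r,h}$.

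The remaining case is small heavy sets, $\varepsilon n<A\cdot L$. Here I would handle $P$ trivially: take $W$ to be the set of all points of $P$. This is a weak $\varepsilon$-net, since any convex set with at least $\varepsilon|P|>0$ members of $P$ contains a point of $P$. Its size is $n<A L/\varepsilon\leq C_{r,h}\varepsilon^{-A}$, using $C_{r,h}\geq A L$ and $\varepsilon\leq 1\leq A$. This threshold is exactly why $C_{r,h}$ includes the term $A\cdot L$.

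If $\varepsilon n\geq AL$, every heavy $C$ satisfies $|Q|\geq AL$, so the selection lemma always applies and the greedy argument above goes through. The final bound $C_{r,h}\varepsilon^{-2^{r+O((\log r)^2)}}$ follows from $A\leq 2^{r+O((\log r)^2)}$, shown above, together with $\varepsilon\leq1$.

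I expect the only delicate point to be the binomial ratio estimate with explicit constants, i.e.\ confirming that $\binom nA/\binom{m}{A}\leq e^A(n/m)^A$ for $m\geq AL$. This follows from $\binom mA\geq (m/A)^A$ and $\binom nA\leq(en/A)^A$. Everything else is bookkeeping.
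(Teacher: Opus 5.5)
Your proposal is correct and is essentially the paper's proof. It uses the Alon--B\'ar\'any--F\"uredi--Kleitman greedy argument driven by Lemma~\ref{lem:applications-selection} when $\varepsilon|P|\geq A\cdot L$, bounds the number of selected points by $\binom{|P|}{A}/\bigl(\lambda_0\binom{\lceil\varepsilon|P|\rceil}{A}\bigr)\leq e^A\lambda_0^{-1}\varepsilon^{-A}$, and takes the support of $P$ when $\varepsilon|P|<A\cdot L$; beyond the paper's sketch, you also spell out the disjointness of the charged families of $A$-subsets and the binomial estimate.
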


\begin{proof}
Apply the standard selection-and-greedy argument of
Alon--B\'ar\'any--F\"uredi--Kleitman~\cite{ABFK} to the distinct convex hulls
of traces $P\cap C$ with $C\in\calC$ and $|P\cap C|\geq\varepsilon|P|$.
If $\varepsilon|P|\geq A \cdot L$, Lemma~\ref{lem:applications-selection}
allows every selected point to be charged to at least
$\lambda_0\binom{\lceil\varepsilon|P|\rceil}{A}$ previously uncharged
$A$-subsets, so at most
\[
 \frac{\binom{|P|}{A}}
 {\lambda_0\binom{\lceil\varepsilon|P|\rceil}{A}}
 \leq \frac{e^A}{\lambda_0} \cdot \varepsilon^{-A}
\]
points are selected.  If $\varepsilon|P|<A \cdot L$, the support of $P$ itself
has size at most $A\cdot L \cdot \varepsilon^{-A}$.  The estimate for $A$ follows
from \eqref{eq:applications-L}, $h<r$, and
$\log(hL)=O(r+\log h)$.
\end{proof}

\paragraph{$(p,q)$ theorem.}
For a finite family $\calF$ of nonempty convex sets, its transversal number $\tau(\calF)$ is the minimum number of points meeting every member
of $\calF$; the family has the $(p,q)$ property if among every $p$
distinct members some $q$ have nonempty intersection.

\begin{theorem}[The $(p,q)$ theorem]\label{thm:applications-pq}
Let $(X,\mathcal{C})$ be an abstract convexity space with Radon number $r$ and Helly number $h$, and let $L, \beta_{h,L}, A, C_{r,h}$ be as defined in~\eqref{eq:applications-L},~\eqref{eq:applications-beta},~\eqref{eq:applications-A},~\eqref{eq:applications-constants}, respectively. Suppose $p\geq q\geq L$ and $\calF$ has the $(p,q)$ property. Set
\[
 p'=(p-1)(q-1)+1,
 \qquad
 \alpha_{p,q}:=\frac{\binom qL}{\binom{p'}L}.
\]
Then
\[
 \tau(\calF)\leq
 C_{r,h}\beta_{h,L}(\alpha_{p,q})^{-A}
 \leq
 C'_{r,h}
 \left(\frac{\binom{p'}L}{\binom qL}\right)^{Ah^{L-1}}.
\]
\end{theorem}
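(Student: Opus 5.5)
We will follow the Alon--Kleitman framework as refined by Alon, Kalai, Matou\v{s}ek and Meshulam~\cite{AKMM}: first turn the $(p,q)$ property into a fractional Helly hypothesis, then bound the fractional transversal number via LP duality, and finally round the fractional transversal using weak $\varepsilon$-nets (Theorem~\ref{thm:applications-nets}). Throughout, $\calF$ is a finite family of nonempty convex sets with the $(p,q)$ property, $p\geq q\geq L$.

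\textbf{Step 1 (counting intersecting $L$-tuples in a weighted blow-up).} Let $\mu$ be any probability weighting of $\calF$ with rational weights, realized as a labeled multifamily $\calF_\mu$ of $M$ copies. We claim that at least an $\alpha_{p,q}$-fraction of the $L$-subfamilies of $\calF_\mu$ intersect. First, among any $p'$ members of $\calF_\mu$ some $q$ intersect. If the $p'$ members contain $q$ copies of one set, these $q$ copies intersect. Otherwise they contain at least $p$ distinct sets, since at most $q-1$ copies of each set appear, and the $(p,q)$ property applies. Each such intersecting $q$-set contains $\binom qL$ intersecting $L$-subsets. Double counting over all $p'$-subsets then gives the fraction $\binom qL/\binom{p'}L$. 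Some care is needed with the counting for labeled copies; we expect this to be handled by averaging, losing at most the factor already absorbed in the proof of Theorem~\ref{thm:applications-fh}.

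\textbf{Step 2 (fractional transversal).} Applying Theorem~\ref{thm:applications-fh} to $\calF_\mu$ yields a point lying in sets of total $\mu$-weight at least $\beta:=\beta_{h,L}(\alpha_{p,q})$. By LP duality, the fractional transversal number satisfies $\tau^*(\calF)\leq 1/\beta$. We reduce to finitely many relevant points by choosing one point in each nonempty cell of the Venn diagram of $\calF$. This gives a probability measure $\nu$ on these points, and hence a finite labeled point multiset $P$, such that every member of $\calF$ contains at least a $\beta$-fraction of $P$.

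\textbf{Step 3 (rounding).} Apply Theorem~\ref{thm:applications-nets} to $P$ with $\varepsilon=\beta$. Every $F\in\calF$ is a convex set containing at least $\varepsilon|P|$ labeled points of $P$, so the weak net meets it. This gives $\tau(\calF)\leq C_{r,h}\beta^{-A}$.

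\textbf{Step 4 (the second inequality).} Expand $\beta_{h,L}(\alpha)=(\alpha/2)^{h^{L-1}}(12hL)^{-h(h^{L-1}-1)/(h-1)}$. Then $\beta^{-A}$ equals $(2/\alpha)^{Ah^{L-1}}$ times a factor depending only on $r,h$. Absorbing $2^{Ah^{L-1}}$ and the $(12hL)$-power into $C'_{r,h}$ gives the displayed bound.

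The main obstacle is Step 1: making the counting rigorous for weighted multifamilies. In particular, the bound $\alpha_{p,q}$ must be preserved despite repeated copies, and this is why $p'$ rather than $p$ appears.
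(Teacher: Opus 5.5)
Your proposal is correct and follows essentially the same Alon--Kleitman route as the paper. It uses a rational blow-up, the pigeonhole observation behind $p'=(p-1)(q-1)+1$ with double counting to get an $\alpha_{p,q}$-fraction of intersecting $L$-tuples, then Theorem~\ref{thm:applications-fh} with LP duality to get $\tau^*(\calF)\le\beta_{h,L}(\alpha_{p,q})^{-1}$, and finally rounding via Theorem~\ref{thm:applications-nets}. The step you flag as the main obstacle is in fact exact: after scaling so that $M\ge p'$, every labeled $L$-subset lies in the same number $\binom{M-L}{p'-L}$ of labeled $p'$-subsets, so double counting gives the fraction $\binom qL/\binom{p'}L$ with no loss.
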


\begin{proof}
This is the standard fractional-matching argument of
Alon--Kleitman~\cite{AlonKleitman}; see also~\cite{AKMM}.
Note that after duplicating members, among every $p'$ labeled copies some $q$ intersect. Indeed,  either they represent at least $p$ distinct sets, or one
original member occurs at least $q$ times. Double counting therefore
shows that at least an $\alpha_{p,q}$-fraction of all $L$-tuples
intersect. Theorem~\ref{thm:applications-fh}, applied to rational
duplications of a fractional matching, gives an upper bound of
$\beta_{h,L}(\alpha_{p,q})^{-1}$ on the fractional matching number $\nu^*(\calF)$; linear-programming
duality gives the same bound for the fractional transversal number.
Applying Theorem~\ref{thm:applications-nets} to an optimal fractional
transversal then gives the first inequality, and the second follows
from \eqref{eq:applications-beta}.
\end{proof}

For comparison, the Holmsen--Lee argument used a colorful Helly upper bound of
$r^{r^{\log r}}$~\cite{HolmsenLee}.  Theorem~
\ref{thm:applications-fh} replaces it by $L<2^r h$, and the combination
with Theorem~\ref{thm:main} yields the exponent $A$ above, which is much
smaller than that obtained by applying the general strategy of Alon et al.~\cite{AKMM} to the bound of Holmsen and Lee~\cite{HolmsenLee}.  In the $(p,q)$ theorem this also reduces the admissible
threshold to $q\geq L=O(2^r r)$.  We note that Moran and Yehudayoff give a different
weak-net estimate for finite $S_3$-separable spaces
\cite{MoranYehudayoff}; the assumptions and parameter tradeoffs are
different, so we make no direct comparison.

\section{Concluding remarks}\label{sec:conclusion}

By Theorems \ref{thm:intro-radon} and \ref{thm:lower}, the optimal coefficient $c(r)$ in the linear bound $r_k\leq c(r)k$ satisfies
\[
 \Omega(r^2)\ \leq\ c(r)\ \leq\ O(r^{\lceil \log_2 r \rceil}).
\]
Determining the exact growth seems to be a difficult problem. Here, we suspect that our upper bound is closer to the truth, and propose that the the following potentially optimistic conjecture is a reasonable next target.

\begin{conjecture}\label{conj:superpoly}
If $r_k\leq c(r)\,k$ holds for every convexity space with Radon number $r$ and every $k\geq2$, then $c(r)$ grows superpolynomially.
\end{conjecture}

For separable spaces the problem is nearly resolved. As was shown by Cho, Holmsen, Jung, and Liu \cite{CHJL}, the upper bound $O(r^2\log r)k$ holds even under the weaker $S_3$ axiom, which only asks that every point outside a convex set be separated from it by a halfspace.
The two proofs of this bound which we presented (in Section~\ref{sec:reay} and in Appendix~\ref{app:s4}) make crucial use of the $S_4$ separability assumption 
and do not seem to extend to $S_3$. The spaces $X_n$ of Section \ref{sec:lower} are $S_3$-separable (indeed $S_4$-separable) and give $\Omega(r^2k)$. We conjecture that the construction is optimal.
\begin{conjecture}\label{conj:s3}
There is an absolute constant $C$ such that every $S_3$-separable convexity space satisfies $r_k\leq Cr^2k$ for every $k\geq2$.
\end{conjecture}

Finally, Theorem~\ref{thm:intro-colorful} shows that every convexity
space with Radon number $r\geq3$ satisfies $h_c<2^r(r-1)$. On the other hand,
P\'alv\"olgyi's box-convexity examples give spaces with
$h_c\geq2^{r-1}-1$ \cite{Palvolgyi}. Thus the colorful Helly number is
determined up to a factor $O(r)$. We suspect that this factor is not necessary.

\begin{conjecture}\label{conj:strong-colorful}
    There is an absolute constant $C$ such that every convexity space with Radon number $r$ has colorful Helly number at most $C2^r$.
\end{conjecture}

An interesting potentially easier question in this vein is to bound the {\it weak} colorful Helly number. In this setting, we require that the $k$ color classes be sufficiently large with respect to $r$, and only ask for a $k+1$-set of them to intersect. This has already been studied in several applications~\cite{BaranyMatousek,FranklJung,FranklJungTomon,GoaocHolmsenPatakova,HolmsenPatakova}, and it would also be interesting to decide whether one can prove an $O(2^r)$ bound in this case. More broadly, it is sometimes the case that the weak colorful Helly number is significantly smaller than the strong one; an improved understanding when and why this happens also seems interesting.

\section*{Acknowledgements}
We are deeply grateful to Attila Jung for numerous helpful suggestions and feedback.

\section*{AI Disclosure}
Claude Opus 5 was used by the second author to check and generate drafts of early versions of parts of the argument. In an early draft, Opus 5 pointed the second author to the Karpovsky--Milman lemma from \cite{KarpovskyMilman}, replacing an incorrect use of a different variant of Sauer-Shelah. ChatGPT 5.6 and ChatGPT 6 were used by the first author for proving some lemmas and in editing earlier drafts of the manuscript. ChatGPT 6 was used by the second author to generate the figures in the manuscript.

\appendix

\section{Upper Bound on the Tverberg Number in $S_4$ Separable Convexity Spaces -- Alternative Proof}
\label{app:s4}

In this appendix, we provide an alternative proof of the improved upper bound on the Tverberg number of $S_4$ separable spaces in terms of the Radon number (i.e., the case $k=t$ of Theorem~\ref{thm:intro-reay-general}(2)). Let us repeat its statement. 
\begin{proposition}
\label{alt:separable}
Every $S_4$ separable convexity space with Radon number $r\geq3$, Helly number $h$, and halfspace $VC$-dimension $d$, has $k$'th Tverberg number 
$r_k \leq O(dkh\log h)  \leq O((r^2 \log r)k)$.
\end{proposition}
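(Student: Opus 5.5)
The plan is to follow the two-step template of Theorem~\ref{thm:main}, with each step adapted to the separable setting. In the first step, Sauer--Shelah is used on the halfspace hypergraph (VC-dimension $d$) rather than the Karpovsky--Milman argument on $r_h$-subsets. In the second step, the deterministic Lemma~\ref{lem:near-helly} is applied unchanged; it is driven by the colorful Helly bound of Theorem~\ref{thm:intro-colorful}. Throughout, $d=r-1$ and $h<r$, so the final bound $O(dkh\log h)$ is at most $O((r^2\log r)k)$.

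\textbf{Step 1.} Let $P$ be a labeled multiset of $N=Cdkh\log h$ points, and color it uniformly at random with $2k$ colors. Fix an $h$-tuple $I$ of colors and condition on the set $Q$ of points whose colors lie in $I$; we have $|Q|\approx Nh/2k$. Suppose $\bigcap_{i\in I}\conv(P_i)=\emptyset$. By the $S_4$ property~\cite[Lemma~4.1]{AlonSmorodinsky}, there are halfspaces $H_i\supseteq\conv(P_i)$ for $i\in I$ with $\bigcap_{i\in I} H_i=\emptyset$. The complements $X\setminus H_i$ cover $Q$, so some complement $X\setminus H_i$ contains at least $|Q|/h$ points of $Q$, yet color $i$ avoids all of them.

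The probability of this event will be bounded by a union over the traces of halfspaces on $Q$. By Sauer--Shelah there are at most $|Q|^{d}$ such traces. For a fixed trace $T$ with $|T|\geq |Q|/h$ and a fixed color $i$, the probability that no point of $T$ receives color $i$ is $(1-1/h)^{|T|}\leq e^{-|Q|/h^2}$. The resulting bound is $h|Q|^d e^{-|Q|/h^2}$, which is too weak: it would force $|Q|\gtrsim h^2 d\log$, that is, $N\gtrsim dhk\log$ only after checking the factor. Since $|Q|/h^2\approx N/(2kh)$, the condition $N\geq Cdkh\log(\cdot)$ in fact yields $h|Q|^de^{-|Q|/h^2}\leq\eta$, where $\eta=(4h^2h_c)^{-h}$ and $\log(1/\eta)=O(h\,r)$. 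This is the point to check carefully. The $\log$ term from $|Q|^d$ contributes $d\log(dkh)$, which brings in a $\log k$ dependence.

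To remove the $\log k$, I would replace the union over all traces by an $\varepsilon$-approximation or $\varepsilon$-net argument inside $Q$, exactly as in Lemma~\ref{lem:packed-nets-direct}. Take a random $\frac1h$-net of size $O(dh\log h)$ within $Q$. Each color class restricted to $Q$ is a random subset of density $1/h$, so by the Blumer et al.\ bound it is itself a $\frac1{h}$-net with failure probability $e^{-\Omega(|Q|/h)}$ once $|Q|/h\geq C' d\log h$. Since a halfspace containing all of color $i$ must miss a $1/h$-heavy set, color $i$ would fail to be a net. Getting failure probability $\eta$ rather than a constant requires the standard exponential tail of the $\varepsilon$-net theorem, namely $\Pr[\text{not a net}]\leq 2\Phi_d(2m)2^{-\varepsilon m/2}$. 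This demands $|Q|/h\gtrsim d\log h+\log(1/\eta)/1$, and with $\log(1/\eta)=O(h(r+\log h))$ this gives $N/2k\gtrsim dh\log h$ when $d\approx r$.

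\textbf{Step 2.} Given a coloring with at most $\eta\binom{2k}{h}$ bad $h$-tuples, apply Lemma~\ref{lem:near-helly} to $\conv(P_1),\dots,\conv(P_{2k})$. This yields $k$ color classes whose hulls meet, and merging the leftover classes gives a $k$-Tverberg partition. The case $k<h$ is handled by Cho et al.'s argument, or directly via nets as in Section~\ref{sec:reay}.

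The main obstacle is Step~1: matching the tail $\log(1/\eta)=\Theta(h(r+\log h))$ against $d h\log h$ so that no spurious $\log k$ or extra factor of $r$ appears. I expect this to be the delicate bookkeeping of the exponential $\varepsilon$-net tail.
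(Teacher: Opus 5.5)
\textbf{There is a genuine gap in Step~1.} It is not a bookkeeping issue.

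You relax the bad event ``$Q_i\subseteq H_i$ for all $i\in I$ with $\bigcap_{i\in I}H_i=\emptyset$'' to a single-color event: ``some $Q_i$ avoids a complement $X\setminus H$ containing at least $|Q|/h$ points of $Q$''. Take one fixed complement meeting $Q$ in about $|Q|/h$ points. Avoiding it already has probability $(1-1/h)^{|Q|/h}\geq e^{-2|Q|/h^2}$. So whenever such a halfspace exists, no union bound or $\varepsilon$-net refinement of your relaxed event can do better than $e^{-\Theta(|Q|/h^2)}$.

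On the other hand, Lemma~\ref{lem:near-helly} needs $\eta=(4h^2h_c)^{-h}$. Since you only know $h_c\leq h2^r$, you must allow $\log(1/\eta)=\Theta(hr)$. You therefore need $|Q|/h^2\gtrsim hr$, i.e.\ $N/2k\approx |Q|/h\gtrsim h^2r$. This gives only $r_k=O(h^2rk)=O(r^3k)$, which is off from the target by a factor $h/\log h$.

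Your $\varepsilon$-net fix inherits the same exponent. In the tail $2\Phi_d(2m)2^{-\varepsilon m/2}$ the sample size is $m\approx|Q|/h$ and $\varepsilon=1/h$, so $\varepsilon m/2=|Q|/(2h^2)$. The real requirement is $|Q|/h\gtrsim h\bigl(d\log(\cdot)+\log(1/\eta)\bigr)$; you dropped the factor $1/\varepsilon$. Separately, the $\log k$ worry is a red herring: $|Q|$ is concentrated around $hN/2k$, which does not depend on $k$.

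\textbf{The missing idea is to keep all $h$ constraints simultaneously.} This is what the paper does in Proposition~\ref{Prop:random-bins2}.
\begin{itemize}
\item Condition on $|Q|=m$. A bad coloring yields an $h$-tuple of halfspace traces $(H_1\cap Q,\ldots,H_h\cap Q)$ with empty common intersection and $Q_a\subseteq H_a$ for every $a$. By Sauer--Shelah there are at most $(em/d)^{dh}$ such tuples.
\item For a fixed tuple, every point of $Q$ lies in at most $h-1$ of the $H_a$. So each point must receive one of at most $h-1$ of the $h$ colors, which has probability at most $(1-1/h)^m$.
\item The resulting union bound $(em/d)^{dh}e^{-m/h}$ pays a factor $h$ more in the entropy term but gains a factor $h$ in the exponent.
\item With $N/2k\geq Cdh\log(eh)$, a Chernoff bound handles $m<hN/4k$. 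For $m\geq hN/4k$, the term $m/h$ dominates both $dh\log(em/d)=O(dh\log(eh))$ and $\log(1/\eta)=O(dh)$, since the exponent only improves as $m$ grows.
\end{itemize}

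Your Step~2 via Lemma~\ref{lem:near-helly} is then exactly the paper's argument. For $k<h$, the paper uses the same tuple count, $k^n\leq(en/d)^{dk}(k-1)^n$. Your appeal to the net argument of Section~\ref{sec:reay} would also work in that range.
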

Unlike the proofs of the same statement in~\cite{CHJL} (under the weaker $S_3$ assumption) and in Section~\ref{sec:reay}, this proof does not use the $\varepsilon$-net theorem. Instead, it goes via the colorful Helly theorem, like our proof of the Tverberg upper bound in general spaces (i.e., Theorem~\ref{thm:intro-radon}). The difference between the general spaces bound and the bound in the $S_4$ separable setting is that the $S_4$ separability allows moving from non-intersecting convex sets to non-intersecting halfspaces containing them, which in turn allows applying Sauer-Shelah type arguments. These arguments replace the argument using $r_h$ in the proof of Theorem~\ref{thm:intro-radon}, and thus lead to a significantly better bound.


\paragraph{Properties of $S_4$ convexity spaces.} Like in the proof of the same statement in Section~\ref{sec:reay}, we use several properties of $S_4$ separable convexity spaces. 
First, in any $S_4$ separable convexity space, we have $d=r-1$ (see~\cite[Lemma~3.1]{KellerSmorodinsky}). Second, if convex sets
$C_1,\ldots,C_m$ have empty intersection, then there are halfspaces $H_1,\ldots,H_m$ such that
$C_i\subseteq H_i$ for every $i$ and $\bigcap_iH_i=\emptyset$ (see~\cite[Lemma~4.1]{AlonSmorodinsky}). Third,
Theorem~\ref{thm:intro-colorful} gives $h_c\le h2^r=h2^{d+1}$. Since $h<r=d+1$, it follows that
$h\log(4h^2h_c)=O(dh)$.

\medskip The key lemma is the following analogue of Proposition~\ref{prop:random-bins}.
\begin{proposition}\label{Prop:random-bins2}
Let $(X,\mathcal C)$ be an $S_4$ separable convexity space with Helly number $h\ge2$ and halfspace VC-dimension
$d$. Let $P$ be an $n$-element multiset, and assign its elements independently and uniformly to $N$ labeled bins $P_1,\ldots,P_N$. There is an absolute constant $C$ such that if
$n/N\ge Cdh\log(eh)$, then for every fixed $h$-element set $I\subseteq[N]$,
\[
 \Pr\!\left[\bigcap_{i\in I}\conv(P_i)=\emptyset\right]
 \le (4h^2h_c(X,\mathcal C))^{-h}.
\]
\end{proposition}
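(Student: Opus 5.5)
Fix $I$ with $|I|=h$ and let $Q$ be the submultiset of $P$ landing in bins of $I$. Conditional on $|Q|=m$, the bin labels of $Q$ are i.i.d. uniform in $I$. The plan is a Sauer--Shelah bound on the set of bad labelings of $Q$, with $S_4$ separability providing the combinatorial restriction in place of $r_h$.

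\textbf{Step 1 (structure of bad labelings).} Call a labeling $\chi:Q\to I$ bad if $\bigcap_{i\in I}\conv(\chi^{-1}(i))=\emptyset$. By the second $S_4$ property there are halfspaces $H_i$, $i\in I$, with $\chi^{-1}(i)\subseteq H_i$ and $\bigcap_i H_i=\emptyset$. Hence every bad $\chi$ is determined by an $h$-tuple of traces $(H_i\cap Q)_{i\in I}$ of halfspaces on $Q$ satisfying two conditions:
\begin{itemize}
\item each point $x$ lies in $H_{\chi(x)}$;
\item no point of $X$ (in particular no point of $Q$) lies in all $H_i$.
\end{itemize}
In particular, every $x\in Q$ lies outside some $H_i$ with $i\neq\chi(x)$.

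\textbf{Step 2 (counting and probability).} I will bound the probability directly rather than the count. By the Sauer--Shelah lemma the halfspace range space on $Q$ has at most $(em/d)^d$ traces, so there are at most $(em/d)^{dh}$ possible tuples $(H_i\cap Q)_i$. For a fixed tuple, the probability that a uniformly random $\chi$ is compatible with it is $\prod_{x\in Q}|\{i: x\in H_i\}|/h$. Each factor is at most $(h-1)/h$, since no point lies in all $H_i$. So the probability is at most $(1-1/h)^m\le e^{-m/h}$.

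A union bound then gives
\[
\Pr[\text{bad}\mid |Q|=m]\le \exp\bigl(dh\log(em/d)-m/h\bigr).
\]
This is small once $m\ge C' d h^2\log(eh)$, which is too weak: we need $m\approx Cdh^2\log h$ with target $\exp(-O(dh))$ accuracy. Let me check. We have $\mathbb E m=hn/N\ge Cdh^2\log(eh)$. The exponent is then $dh\log(Ch^2\log h)-Cdh\log(eh)\le -c\,dh\log(eh)$ for $C$ large. That suffices, since the target $(4h^2h_c)^{-h}=\exp(-O(dh))$ by the third property $h\log(4h^2h_c)=O(dh)$. So no loss.

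\textbf{Step 3 (averaging over $m$).} The bound must be averaged over $m=|Q|\sim\mathrm{Bin}(n,h/N)$. Split into two cases. If $m<\frac12\mathbb E m$, Lemma~\ref{lem:tail} bounds the probability by $e^{-\mathbb E m/8}\le e^{-Cdh^2\log(eh)/8}$, which is far below the target. If $m\ge\frac12\mathbb E m$, the Step~2 bound applies, using that $m\mapsto dh\log(em/d)-m/h$ is decreasing once $m\gtrsim dh^2$. For very large $m$ the $\log m$ term is dominated anyway. Choosing $C$ absolute large enough makes both contributions at most $\frac12(4h^2h_c)^{-h}$.

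\textbf{Main obstacle.} The delicate step is Step~1. One must make sure the halfspaces $H_i$ can be taken as the separating object for the multiset, and that the compatibility probability bound uses only the condition that no point of $Q$ lies in all $H_i$. That condition follows from $\bigcap H_i=\emptyset$. Labeled repetitions do not increase the number of traces beyond the Sauer--Shelah bound in terms of distinct points, so the estimate survives in the multiset setting.
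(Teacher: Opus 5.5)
Your proposal is correct and follows essentially the same route as the paper: use $S_4$ separation to pass to halfspaces $H_a$ with $\bigcap_a H_a=\emptyset$, then take a union bound of $(em/d)^{dh}(1-1/h)^m$ over trace tuples. For $|Q|<\tfrac12\mathbb{E}|Q|$ the paper applies Chernoff, and for larger $|Q|$ it uses that $dh\log(em/d)-m/h$ is decreasing; the target is then absorbed via $h\log(4h^2h_c)=O(dh)$. Your hesitant aside in Step~2 (``too weak'') is unnecessary, since $\mathbb{E}|Q|=hn/N\ge Cdh^2\log(eh)$ already puts you in the decreasing regime, as you then verify yourself.
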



\begin{proof}
Fix $I\in\binom{[N]}h$, and let $Q$ be the multiset of elements of $P$ assigned to one of the bins in $I$. Conditional on $|Q|=m$, the labels of the elements of $Q$ among the $h$ selected
bins are independent and uniform in $[h]$.

Suppose that $Q=Q_1\sqcup\cdots\sqcup Q_h$ is a `bad coloring', i.e.,
$\bigcap_a\conv(Q_a)=\emptyset$. By the property of $S_4$ spaces mentioned above, there are halfspaces
$H_1,\ldots,H_h$ such that $Q_a\subseteq H_a$ for every $a$ and $\bigcap_aH_a=\emptyset$.
By the Sauer--Shelah lemma, the number of possible traces of one halfspace on $Q$ is at most
$\sum_{j=0}^d\binom mj\le(em/d)^d$ (for $m\ge d$). Hence the number of $h$-tuples of traces is
at most $(em/d)^{dh}$. For a fixed tuple with empty total intersection, every point is allowed to participate in only $h-1$ bins. Therefore
\[
 \Pr\!\left[\bigcap_{a=1}^h\conv(Q_a)=\emptyset\,\middle|\,|Q|=m\right]
 \le (em/d)^{dh}(1-1/h)^m
 \le \exp\!\left(dh\log(em/d)-m/h\right).
\]
Put $S=Cdh\log(eh)$ for a constant $C$ that will be chosen below, and assume $n/N\ge S$. Then
$\mu=\mathbb E|Q|=hn/N\ge hS$, and a Chernoff bound gives
\[
\Pr[|Q|<hS/2]\le e^{-\mu/8}\le e^{-hS/8}.
\]
For $m\ge hS/2$, the function
$f(m)=dh\log(em/d)-m/h$ is decreasing (for $C$ large enough), and thus,
\[
f(m)\le dh\log(ehS/(2d))-S/2\le-S/4.
\]
Therefore, 
\[
\Pr\left[\bigcap_{i\in I}\conv(P_i)=\emptyset\right]
\le e^{-hS/8}+e^{-S/4}\le 2e^{-S/4}.
\]
On the other hand, since $X$ is an $S_4$ separable space, we have $h\log(4h^2h_c)=O(dh)$ as was written above.
Since $S=Cdh\log(eh)$, choosing the absolute constant $C$
sufficiently large yields
\[
\Pr\left[\bigcap_{i\in I}\conv(P_i)=\emptyset\right] \le 2e^{-S/4}
\le
\exp\bigl(-h\log(4h^2h_c)\bigr)
=
(4h^2h_c)^{-h},
\]
as asserted.
\end{proof}

\begin{proof}[Proof of Proposition~\ref{alt:separable}]
First assume $k\ge h$. Let $N=2k$, and let $P$ be an $n$-element multiset with
$n\ge Cdhk\log(eh)$. By Proposition~\ref{Prop:random-bins2} and linearity of expectation, there is an assignment
of the elements of $P$ to $N$ bins with at most
$(4h^2h_c(X,\mathcal C))^{-h}\binom Nh$ $h$-tuples of bin convexity hulls whose intersection is empty. Lemma~\ref{lem:near-helly}, applied to
the labeled family $\conv(P_1),\ldots,\conv(P_N)$, gives at least $N/2=k$ bin hulls with a
common point. As in the proof of Theorem~\ref{thm:main}, these bins are nonempty and can be used as the
cores of the Tverberg parts; distributing the remaining bins among them gives a $k$-Tverberg
partition.

It remains to handle $2\le k<h$. If every $k$-coloring of an $n$-point multiset were bad (i.e., if for each $k$-coloring, the convex hulls of the parts had empty intersection), the same argument as above (i.e., using the $S_4$ separation property to move to halfspaces and applying the Sauer--Shelah lemma) would give
\[
k^n\le(en/d)^{dk}(k-1)^n,
\]
and consequently, $n\log(k/(k-1))\le dk\log(en/d)$, which in turn implies $n\le dk^2\log(en/d)$. For a sufficiently large absolute constant $C$ this is impossible once
$n\ge Cdk^2\log(ek)$. Since $k<h$, this threshold is at most $Cdhk\log(eh)$, for a sufficiently large
$C$. This proves $r_k(X,\mathcal C)\le Cdhk\log(eh)$, as asserted.
\end{proof}

\end{document}